\renewcommand{\S}{\mathbb{S}}
\newcommand{\R}{\mathbb{R}}
\newcommand{\norm}[1]{\left\lVert#1\right\rVert}
\newcommand{\ip}[2]{\left\langle #1,#2\right\rangle}
\newcommand{\Vol}[0]{\mathrm{Vol}}
\newcommand{\tF}{\tilde{\mathcal{F}}}
\newcommand{\sgn}{\mathrm{sgn}}
\tikzset{cross/.style={cross out, draw=black, minimum size=2*(#1-\pgflinewidth), inner sep=0pt, outer sep=0pt},
cross/.default={1pt}}
\theoremstyle{plain}
\newtheorem{Thm}{Theorem}[section]
\newtheorem{Lem}[Thm]{Lemma}
\newtheorem{Cor}[Thm]{Corollary}
\newtheorem{Prop}[Thm]{Proposition}
\newtheorem{Def}[Thm]{Definition}
\newtheorem{Rem}[Thm]{Remark}
\theoremstyle{plain}
\newtheorem{thm}{theorem}
\newtheorem{Assump}[thm]{Assumption}
\numberwithin{equation}{section}
\title{}
\date{}
\author{}
\begin{document}

\title[Generalized Yamabe Flows]
{Generalized Yamabe Flows}

\author{J\o rgen Olsen Lye}
\address{Institut für Differentialgeometrie,
Leibniz Universit\"at Hannover,
Germany}
\email{joergen.lye@math.uni-hannover.de}

\author{Boris Vertman}
\address{Mathematisches Institut,
Universit\"at Oldenburg,
Germany}
\email{boris.vertman@uni-oldenburg.de}

\author{Mannaim Gennaro Vitti}
\address{Departamento de Matemática, Universidade Federal de São Carlos (UFSCar),  Brazil}
\email{manavitti@yahoo.com.br}

\subjclass[2000]{53C44; 58J35; 35K08}
\date{\today}

\begin{abstract}
{In this work we introduce a family of conformal flows generalizing the classical Yamabe flow.  We prove that for a large class of such flows long-time existence holds, and the arguments are in fact simpler than in the classical case. Moreover, we establish convergence for the case of negative scalar curvature and expect a similar statement for the positive and the flat cases as well.}
\end{abstract}

\maketitle
\tableofcontents

\section{Introduction and statement of the main results}\label{intro-section}

\subsection{Weak and strong Yamabe conjectures}
Let $M$ be a compact manifold (in this paper always without boundary unless otherwise stated) of dimension $n\geq 3$.
Yamabe \cite{Yamabe} suggested the following min-max procedure for producing Einstein metrics on $M$. 
The procedure consists of two steps. For any Riemannian metric $g_0$ on $M$,
find a minimum $g_{\min}$ of the \emph{Einstein-Hilbert functional} 
\begin{equation}
\mathcal{E}(g)\coloneqq  \Vol_g^{\frac{2-n}{n}} \int_M S\, d\Vol_g 
\label{eq:EH}
\end{equation}
in the conformal class of $[g_0]$ while keeping the total volume fixed. 
Here, $S = S(g)$ denotes the scalar curvature of $g$, $\Vol_g$ is the total volume of $M$ 
with respect to $g$. One sets
\[
\mathcal{E}(g_{\min})=\inf_{g\in [g_0]} \mathcal{E}(g) \eqqcolon Y(M,[g_0]),
\]
where $Y(M,[g_0])$ is known as the \emph{Yamabe constant}.
As the second step, find a maximum $g$ of the Einstein-Hilbert functional amongst all $g_{\min}$.
 The resulting metric, if it exists, is an Einstein metric. Of course, the space of Riemannian metrics is not compact, so the above minimum and maximum may fail to exist. \medskip

The first step, existence of a minimiser of the Einstein-Hilbert action within a conformal class, became known as the \textbf{(strong) Yamabe conjecture}.
A necessary condition for minimising is of course to be a critical point. Critical points of the functional \eqref{eq:EH} within a conformal class are precisely constant scalar curvature metrics. Hence the 
\textbf{(weak)}\footnote{We have referred to the weak Yamabe conjecture as the Yamabe conjecture in previous work. This is also what one finds in \cite{Brendle}. In a discussion of the first author with Kazuo Akutagawa, the latter informed us that in Japan, one typically means the strong Yamabe conjecture when talking about the Yamabe problem. Hence the distinction.} \textbf{Yamabe conjecture}
was born: given a compact Riemannian manifold $(M,g_0)$, can one conformally change the metric $g_0$ to make the scalar curvature constant? \medskip

The first proof of the (strong) conjecture was initiated by Yamabe \cite{Yamabe} and continued by Trudinger \cite{Trudinger},
Aubin \cite{Aubin}, and Schoen \cite{Schoen}. The proof is based on the calculus of variations and elliptic partial differential 
equations. 

\subsection{Normalized Yamabe flow}
An alternative tool for proving the conjecture is due to Hamilton \cite{Hamilton}: the normalized Yamabe flow of a Riemannian manifold $(M,g_0)$. This is a family $g\equiv g(t), t\in [0,T]$ of Riemannian metrics on $M$ such 
that the following evolution equation holds
 \begin{equation}\label{eq:YF}
 \begin{cases}\partial_t g=-(S-\sigma)g, \quad \sigma \coloneqq \Vol_g^{-1} \int_M S\, d\Vol_g, \\
  g(0)=g_0.\end{cases}
 \end{equation}
As above, $S=S(g)$ is the scalar curvature of $g$, $\Vol_g$ the total volume of $M$ with respect to $g$ 
and $\sigma$ is the average scalar curvature of $g$.
The normalization by $\sigma$ ensures that the total volume does not change along the flow.
\medskip

The Yamabe flow \eqref{eq:YF} is the negative gradient flow of \eqref{eq:EH} within a conformal class, subject to the total volume being fixed. Since Yamabe initially wanted to find a minimum of \eqref{eq:EH}, this is a natural approach to try.
Hamilton \cite{Hamilton} showed the long time existence of \eqref{eq:YF}. 
Stationary points of \eqref{eq:YF} are clearly constant scalar curvature metrics. The hope of Hamilton was to show that \eqref{eq:YF} always converges as $t\to \infty$ to a constant scalar curvature metric. This would then prove the (weak) Yamabe conjecture by parabolic methods. This may of course fail to be a global minimum of $\eqref{eq:EH}$, so this would not settle the (strong) Yamabe conjecture without further arguments. 

\subsection{Literature in compact and non-compact settings}
Establishing convergence of the normalized Yamabe flow is intricate already in the setting of smooth, compact manifolds. Ye \cite{Ye} was able to show long time existence and convergence if the initial scalar curvature is negative or the initial metric admits a scalar flat metric in its conformal class. They were also able to show convergence when the initial scalar curvature is positive and the manifold is locally conformally flat.
The case of a non-conformally flat $g_0$ with positive scalar curvature is delicate and has been studied first by Schwetlick and Struwe
\cite{SS} for large energies and later by Brendle \cite{Brendle, BrendleYF} for arbitrary energies. More specifically,
\cite[Section 5]{SS} as well as \cite[p. 270]{Brendle},  \cite[p. 544]{BrendleYF} invoke the positive mass theorem, which is 
where the dimensional restriction in \cite{SS},\cite{Brendle} and the spin assumption in \cite[Theorem 4]{BrendleYF} come from. As to date, we do \textit{not} know that the Yamabe flow converges for an arbitrary compact Riemannian manifold with positive scalar curvature without imposing further restrictions. 
\medskip

In the non-compact setting, our understanding is limited. On complete manifolds, long-time existence 
has been discussed in various settings by Ma \cite{Ma}, Ma and An \cite{MaAn}, Schulz \cite{Schulz}.
On incomplete surfaces, where the Ricci and Yamabe flows coincide, see the work by Isenberg, Mazzeo and Sesum 
\cite{IMS}, Yin \cite{Yin}, as well as Giesen and Topping \cite{Topping, Topping2}
who constructed a flow that becomes instantaneously complete. Analysis of the Yamabe flow on incomplete spaces 
with cone-edge (wedge) singularities has been initiated by Bahuaud and the second author in 
\cite{ShortTime, LongTime}, where existence and convergence of the Yamabe flow has been established in case of negative Yamabe invariant. Other existence results have been obtained by Roidos \cite{Conic}
in the presence of a cone singularity as long as the initial scalar curvature is in $L^{q}(M)$  for some $q>\frac{n}{2}$.
See also the work of Shao \cite{Shao}. The first two authors, jointly with Carron, extended the analysis from wedges to 
more generally, stratified spaces with iterated cone-edge singularities in 
\cite{LV, CLV}, addressing convergence of the Yamabe flow in the positive case. 
We do not attempt at providing a complete list of relevant references.

\subsection{Generalized Yamabe flow and statement of the main results}
In this work, we propose a modification of the Yamabe-flow \eqref{eq:YF} in the setting of smooth, compact manifolds. Let $(M,g_0)$ be a compact Riemannian manifold of dimension $n\geq 3$. Let $f\colon \R\to \R$ be strictly decreasing ($f'<0$) and at least $C^2$ (we will formulate more precise conditions below). Let $u_0\in C^\infty(M)$, $u_0>0$ be some fixed conformal factor. 
\begin{Def}
The \emph{generalized (normalized) Yamabe flow} is a family 
\begin{align*} g&= g(t)=u(t)^{\frac{4}{n-2}}g_0,\quad    t\in [0,T)
\end{align*}
of Riemannian metrics in the conformal class of $g_0$ such 
that the following evolution equation holds
 \begin{equation}\label{eq:GYF}
 \begin{cases} \partial_t g=(f(S)-A)g, \quad A \coloneqq \Vol_g^{-1} \int_M f(S)\, d\Vol_g,\\ 
g(0)=u_0^{\frac{4}{n-2}}g_0.\end{cases}
 \end{equation}
 \end{Def}
\noindent The family of functions $u(t)\in C^\infty(M)$ is of course assumed to be positive. 
 
The normalization by $A$ ensures that the total volume does not change along the flow. The classical Yamabe flow \eqref{eq:YF} is the special case with $f(x)=-x$.  Our new family of flows now includes flows with e.g.
$f(x) = 1/x$. \medskip

One could insist on $u_0=1$, i.e. $g(0)=g_0$, but the flexibility of potentially changing the background metric $g_0$ within its conformal class will be useful for our purposes. We will need this when discussing the short-time existence, Section \ref{Section:ShortTime},  and to have a non-trivial flow when $S_0=0$, Section \ref{Section:Flat}.  
  Since we will mainly work with $u(t)$ and not $g(t)$, it should not cause too much confusion that $g(0)\neq g_0$ in general.

In this paper we first establish short time existence of \eqref{eq:GYF} and then have different results based on the sign of the scalar curvature of the background metric, $S_0=S(g_0)$. We have simplified our discussion by only considering three cases; $S_0>0$, $S_0=0$, and $S_0<0$. We will refer to these as the \emph{scalar positive}, \emph{flat}, and \emph{negative} cases respectively. A more exhaustive treatment would be to take $Y(M,[g_0])>0$, $Y(M,[g_0])=0$, and $Y(M,[g_0])<0$ as the trichotomy instead. This would have been at the price of expositional clarity. By the solution of the strong Yamabe conjecture, one can always conformally change the metric to have constant scalar curvature $\sgn(Y(M,[g_0])$, so one could read our results as applying near this solution. Let $\tilde{S}_0$ denote the scalar curvature of $u_0^{\frac{4}{n-2}}g_0$.  We write
\begin{align*}
S_{\max}&\coloneqq \max_{p\in M} S(p),\quad S_{\min}\coloneqq \min_{p\in M} S(p)\\S_{0,\max}&\coloneqq \max_{p\in M} S_0(p),\quad S_{0,\min}\coloneqq \min_{p\in M} S_0(p),\\
\tilde{S}_{0,\max}&\coloneqq \max_{p\in M} \tilde{S}_0(p),\quad \tilde{S}_{0,\min}\coloneqq \min_{p\in M} \tilde{S}_0(p),
\end{align*}
Note that $S_{0,\max/\min}$, $\tilde{S}_{0,\max/\min}$ are constants, whereas $S_{\max/\min}$ are functions of $t$. Furthermore, $\tilde{S}_0=S_{\vert t=0}$, meaning $\tilde{S}_0$ is the initial scalar curvature. The difference between $S_0$ and $\tilde{S}_0$ disappears when $u_0=1$.

\noindent Our first main result deals with short time existence.

\begin{Thm}[Short time existence]
Let $u_0\in C^\infty(M)$, $u_0>0$. Assume $f$ is strictly decreasing on $[\tilde{S}_{0,\min},\tilde{S}_{0,\max}]$. Then there is a time $T>0$ and a unique $u(t)\in C^\infty((0,T)\times M)$, $u(t)>0$ solving  \eqref{eq:GYF}. The existence time $T$ can be extended as long as $f$ stays strictly decreasing on $[S_{\min},S_{\max}]$ and as long as $u$ stays bounded in $C^{2,\alpha}(M)$.
\end{Thm}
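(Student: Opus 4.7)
The plan is to translate \eqref{eq:GYF} into a scalar quasilinear parabolic PDE for the conformal factor $u$, verify strict parabolicity using $f' < 0$, and then apply classical short-time existence theory in parabolic H\"older spaces.

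First I would derive the scalar equation. Writing $g = u^{4/(n-2)} g_0$ and differentiating, $\partial_t g = \frac{4}{n-2}\, u^{-1}(\partial_t u)\, g$, so matching with $(f(S)-A)\, g$ yields
\[
\partial_t u \,=\, \frac{n-2}{4}\bigl(f(S) - A\bigr)\, u, \qquad u(0) = u_0,
\]
where $S = u^{-(n+2)/(n-2)}\bigl(-c_n \Delta_{g_0} u + S_0 u\bigr)$ with $c_n := 4(n-1)/(n-2)$ (and $\Delta_{g_0}$ the analyst's Laplacian), while $A(t) = \Vol_g^{-1}\int_M f(S)\, d\Vol_g$ is a space-independent nonlocal coefficient. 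A direct computation shows that the principal part of the linearization of the right-hand side in $u$ is $-(n-1)\, f'(S)\, u^{-4/(n-2)}\, \Delta_{g_0}$, whose coefficient is strictly positive exactly when $f'(S) < 0$. Since $S|_{t=0} = \tilde{S}_0 \in [\tilde{S}_{0,\min}, \tilde{S}_{0,\max}]$ and $f' < 0$ there by hypothesis, the equation is strictly parabolic in a neighborhood of $t=0$; parabolicity persists on any time interval on which $[S_{\min}, S_{\max}]$ stays inside the monotone range of $f$.

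With parabolicity established, I would obtain a unique short-time solution $u \in C^{1+\alpha/2,\, 2+\alpha}([0,T]\times M)$ from the standard theory of quasilinear parabolic equations on closed manifolds, e.g.\ the linearization and inverse-function-theorem approach of Lunardi or the fixed-point scheme of Ladyzhenskaya-Solonnikov-Ural'tseva. The nonlocal coefficient $A(t)$ is bounded, space-independent, and does not touch the principal part; I would either absorb it as a lower-order perturbation, or, more cleanly, freeze it at each step of a contraction-mapping iteration so that every step is a local quasilinear PDE. Positivity of $u$ is automatic from the factor $u$ on the right-hand side via a Gr\"onwall-type estimate, and smoothness in $(t,x)$ follows by parabolic Schauder bootstrapping, using smoothness of $f$ and $g_0$.

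The continuation statement is then a direct consequence of the same argument: if $u$ remains bounded in $C^{2,\alpha}(M)$ and $f'$ stays strictly negative on $[S_{\min}, S_{\max}]$, the coefficients of the quasilinear equation remain H\"older-controlled and the equation remains uniformly parabolic, so the short-time existence can be restarted at any such time to extend the flow. The main technical obstacle, in my view, is bookkeeping the coupled quasilinear/nonlocal structure of the PDE, since the coefficient of $\Delta u$ depends on $\Delta u$ itself through $S$ and $A$ is an integral of $f(S)$; once the fixed-point iteration and parabolic H\"older spaces are set up, the rest is standard parabolic theory.
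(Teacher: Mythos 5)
Your proposal is correct and follows essentially the same route as the paper: reduce \eqref{eq:GYF} to the scalar equation $\partial_t u = \frac{n-2}{4}(f(S)-A)u$, compute the linearization to see that its principal part is $-c_n f'(S)\, u^{1-\beta}\Delta_0$ (up to the normalizing constant), so that $f'<0$ on $[\tilde{S}_{0,\min},\tilde{S}_{0,\max}]$ gives strict parabolicity near $t=0$, and then invoke standard quasilinear parabolic theory (the paper cites Taylor's Proposition 15.8.1 where you cite Lunardi/LSU) together with the observation that the nonlocal, space-independent coefficient $A(t)$ is a harmless lower-order perturbation. The only organizational difference is that the paper first treats the non-normalized flow $\partial_t u = \frac{n-2}{4}f(S)u$ and recovers the normalized one via Hamilton's rescaling trick when $f$ is homogeneous, resorting to the linearization of the full nonlocal equation otherwise, whereas you work with the normalized equation directly; both are valid.
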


There are two reasons why one needs $f$ to be strictly decreasing. The first is to ensure the evolution equation is parabolic. Indeed, from the linearization one sees that \eqref{eq:GYF} is strictly parabolic if and only if $f$ is strictly decreasing. The second reason is that $f$ should at least be injective. This will ensure that a stationary solution, $0=\partial_t g= (f(S)-A)g$ implies $S=f^{-1}(A)$ is constant. \medskip

Our next main results deal with long time existence. 

\begin{Thm}[Scalar negative case]
Assume $S_0<0$ and that $f$ is strictly decreasing on  $[S_{0,\min},S_{0,\max}]$. Then there is a unique solution $u\in C^\infty((0,\infty)\times M)$ to \eqref{eq:GYF} with $u_0=1$. As $t\to \infty$, $g(t)=u(t)^{\frac{4}{n-2}} g_0$ converges to a metric with constant negative scalar curvature.
\end{Thm}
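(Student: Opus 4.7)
The plan is to extract long-time existence from Theorem 1.1 via uniform a priori estimates, and then identify the limit through a Lyapunov argument. Setting $\phi:=f(S)-A$, the standard formula for the evolution of scalar curvature under a conformal deformation $\partial_t g=\phi g$ reads
\begin{equation*}
\partial_t S \;=\; -(n-1)\Delta_g\phi - \phi\, S \;=\; -(n-1)f'(S)\Delta_g S - (n-1)f''(S)|\nabla S|_g^2 - (f(S)-A)S.
\end{equation*}
Because $A$ is the $d\Vol_g$-mean of $f(S)$ and $f$ is decreasing, $f(S_{\max})-A\le 0\le f(S_{\min})-A$. At a spatial extremum of $S$ the gradient term vanishes and the term $-(n-1)f'(S)\Delta_g S$ has the correct sign thanks to $f'<0$. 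Using $S_{\max}<0$ (which is inductively preserved), I obtain $\partial_t S_{\max}\le 0$ and $\partial_t S_{\min}\ge 0$ at extremal points. Hence $S(t,\cdot)\in[S_{0,\min},S_{0,\max}]$ for all $t$, so $f$ stays strictly decreasing on $[S_{\min},S_{\max}]$ and one of the extension obstructions in Theorem 1.1 is removed.

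\textbf{Pointwise bounds on $u$ and long-time existence.} The conformal-change identity $Su^{(n+2)/(n-2)}=-\tfrac{4(n-1)}{n-2}\Delta_0 u+S_0 u$, evaluated at a spatial maximum (resp.\ minimum) of $u$, gives $Su^{4/(n-2)}\ge S_0$ (resp.\ $\le S_0$). Dividing by the uniformly negative $S$ and using the bounds on $S$ and on $S_0<0$ from the previous step yields, uniformly in $t$, $u_{\max}^{4/(n-2)}\le S_{0,\min}/S_{0,\max}$ and $u_{\min}^{4/(n-2)}\ge S_{0,\max}/S_{0,\min}>0$, hence $0<c\le u(t,\cdot)\le C$. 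Reading the elliptic identity backwards now controls $\Delta_0 u$ in $L^\infty$, and bootstrapping parabolic Schauder estimates on $\partial_t u=\tfrac{n-2}{4}(f(S)-A)u$ delivers uniform $C^{k,\alpha}$ bounds for every $k$. The extension criterion of Theorem 1.1 then promotes the maximal time to $+\infty$.

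\textbf{Lyapunov functional.} To identify the limit I would use
\begin{equation*}
L(t):=\int_M(-S)^{n/2}\,d\Vol_g,
\end{equation*}
well-defined because $S<0$ uniformly. Differentiating with the formula for $\partial_t S$ and $\partial_t\,d\Vol_g=\tfrac{n}{2}\phi\,d\Vol_g$, one integration by parts moves the Laplacian onto $F'(S)$ with $F(S)=(-S)^{n/2}$; the zeroth-order contributions cancel identically thanks to the algebraic identity $\tfrac{n}{2}F(S)-S F'(S)\equiv 0$, leaving
\begin{equation*}
\frac{d}{dt}L \;=\; -(n-1)\tfrac{n(n-2)}{4}\int_M (-S)^{n/2-2}\,|f'(S)|\,|\nabla S|_g^2\,d\Vol_g \;\le\; 0,
\end{equation*}
with equality iff $S$ is spatially constant. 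Since $L$ is bounded below and monotone, time-integrating and using uniform two-sided bounds on $(-S)^{n/2-2}$ and $|f'(S)|$ gives $\int_0^\infty\!\int_M|\nabla S|_g^2\,d\Vol_g\,dt<\infty$; the uniform $C^{k,\alpha}$ bounds control $\tfrac{d}{dt}\int|\nabla S|_g^2\,d\Vol_g$, so $\int_M|\nabla S|_g^2\,d\Vol_g\to 0$.

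\textbf{Identification of the limit; main obstacle.} Arzelà--Ascoli applied to the uniform $C^{k,\alpha}$ bounds yields subsequential $C^k$-limits $u(t_j,\cdot)\to u_\infty$ with $u_\infty>0$, and the previous step forces the scalar curvature of $u_\infty^{4/(n-2)}g_0$ to be a (necessarily negative) constant. The main obstacle is upgrading subsequential to full convergence, and this is where the sign assumption $S_0<0$ is essential: by the classical Aubin uniqueness theorem, a conformal class on a compact manifold admits at most one constant-negative-scalar-curvature representative of prescribed volume, so all subsequential limits coincide. Consequently the whole flow converges as $t\to\infty$ to a metric of constant negative scalar curvature, establishing the claim.
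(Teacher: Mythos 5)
Your argument is broadly sound, but it diverges from the paper's proof at two substantive points and contains one genuine gap. The paper's entire negative-case analysis hinges on a single estimate you do not derive: combining the maximum/minimum principle inequalities for $S_{\max}$ and $S_{\min}$ with the mean value theorem gives $\partial_t(S_{\max}-S_{\min})\leq S_{0,\max}f'(\xi_t)(S_{\max}-S_{\min})$, and since $S_{0,\max}<0$ and $f'\leq -c<0$ the factor satisfies $S_{0,\max}f'(\xi_t)\leq -B<0$, whence $S_{\max}-S_{\min}$ and therefore $\norm{f(S)-A}_{L^\infty}$ decay like $e^{-Bt}$. This one estimate produces both the uniform two-sided bounds on $u$ (integrate $\partial_t\ln u=\tfrac{n-2}{4}(f(S)-A)$) and, once long-time existence is in hand, convergence directly: $\partial_t g\to 0$ exponentially, so $g(t)$ is Cauchy and converges with a rate, with no subsequences and no appeal to uniqueness of the limit. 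Your substitutes --- the elliptic maximum-principle bounds $S_{0,\max}/S_{0,\min}\leq u_{\min}^{4/(n-2)}$ and $u_{\max}^{4/(n-2)}\leq S_{0,\min}/S_{0,\max}$ obtained at spatial extrema of $u$ (in the spirit of Ye), and the Lyapunov functional $\int_M(-S)^{n/2}\,d\Vol_g$ (whose monotonicity is indeed the $p=n/2$ case of the paper's Lemma \ref{Lem:SpEvol}) combined with uniqueness of the unit-volume constant-negative-scalar-curvature representative of the conformal class --- are correct, but more roundabout, yield no convergence rate, and import an external uniqueness theorem. They do avoid the exponential decay estimate, so the two routes are genuinely different.

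The gap is in the regularity bootstrap. From $c\leq u\leq C$ and $S\in[S_{0,\min},S_{0,\max}]$ you correctly get $\Delta_0 u\in L^\infty$, hence $u\in W^{2,p}$ for all $p$ and $u\in C^{1,\alpha}(M)$ uniformly. But you cannot then simply ``bootstrap parabolic Schauder estimates on $\partial_t u=\tfrac{n-2}{4}(f(S)-A)u$'': that equation is fully nonlinear in $u$ (the right-hand side contains $S=u^{-\beta}L(u)$, i.e.\ second derivatives of $u$), and Schauder theory needs H\"older-continuous data, i.e.\ it needs $S\in C^\alpha(M)$ uniformly in $t$ --- which is precisely what is not yet known at this stage. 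The paper closes this loop by multiplying the evolution equation of $S$ by $f'(S)$ to obtain $(\partial_t+(n-1)f'(S)\Delta)f(S)=-Sf'(S)(f(S)-A)$, a uniformly parabolic equation with merely bounded measurable coefficients (uniform ellipticity uses the $C^{1,\alpha}$ bound on $u$ and $f'\leq -c$), and then invokes the Krylov--Safonov estimate to conclude that $f(S)$, hence $S$, is uniformly $C^\alpha$; only then is $\Delta_0 u=c_n^{-1}(S_0u-u^\beta S)$ uniformly $C^\alpha$ and the extension criterion applicable. Some input of De Giorgi--Nash--Moser/Krylov--Safonov type is unavoidable here, and your proposal omits it; your later claim that the uniform bounds ``control $\tfrac{d}{dt}\int_M|\nabla S|_g^2\,d\Vol_g$'' likewise presupposes higher-order bounds that rest on this missing step.
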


\begin{Thm}[Scalar flat and positive]
Assume that either $S_0=0$ and $u_0\in C^\infty(M)$ is positive, or that  that $S_0>0$ everywhere and $u_0=1$. Assume $f$ is bounded from below and that $f$ is strictly decreasing on $\left[\min\{\tilde{S}_{0,\min},0\}, \infty\right)$.  Then there there is a unique, positive solution $u\in C^\infty((0,\infty)\times M)$ to \eqref{eq:GYF}.
\end{Thm}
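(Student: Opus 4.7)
The plan is to invoke the extension criterion from the short-time existence theorem: on the maximal interval of existence $[0,T^\ast)$ it suffices, for every $T<T^\ast$, to show that $S_{\min}(t)\geq\min\{\tilde{S}_{0,\min},0\}$ (so that $f$ stays strictly decreasing on $[S_{\min}(t),S_{\max}(t)]$) and that $\|u(t)\|_{C^{2,\alpha}(M)}\leq C_T$. Uniqueness on $(0,\infty)$ then follows from uniqueness on each such $[0,T]$.

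First I would derive the evolution of the scalar curvature. Inserting $2\sigma=f(S)-A$ into the standard identity $\partial_t S=-2\sigma S-2(n-1)\Delta_g\sigma$ valid under $\partial_t g=2\sigma g$ yields
\[
\partial_t S \;=\; -(n-1)f'(S)\,\Delta_g S \;-\;(n-1)f''(S)|\nabla S|_g^2 \;+\;(A-f(S))\,S,
\]
which is strictly parabolic wherever $f'<0$. Since $f(S_{\max})\leq A\leq f(S_{\min})$ by monotonicity of $f$ and the definition of $A$, the maximum principle at a spatial minimum gives $\partial_t S_{\min}\geq (A-f(S_{\min}))\,S_{\min}$; the right-hand side is $\geq 0$ whenever $S_{\min}\leq 0$, so a standard barrier argument forces $S_{\min}(t)\geq\min\{\tilde{S}_{0,\min},0\}$. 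Using the lower bound $m\coloneqq\inf f$, the maximum principle at a spatial maximum yields $\partial_t S_{\max}\leq\bigl(f(\min\{\tilde{S}_{0,\min},0\})-m\bigr)\max\{S_{\max},0\}$, and Gronwall produces an exponential-in-$t$ upper bound on $S_{\max}$. Thus $S$ is uniformly bounded in $L^\infty([0,T]\times M)$.

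The identity $\partial_t\log u=\tfrac{n-2}{4}(f(S)-A)$ together with $|f(S)-A|\leq f(\min\{\tilde{S}_{0,\min},0\})-m$ then gives $|\partial_t\log u|\leq C$, hence uniform two-sided $C^0$ bounds on $u$. Combined with the $L^\infty$ bound on $S$, the pointwise elliptic identity
\[
-c_n\Delta_0 u + S_0\, u \;=\; S\,u^{\frac{n+2}{n-2}},\qquad c_n=\tfrac{4(n-1)}{n-2},
\]
has right-hand side in $L^\infty$, so elliptic regularity yields uniform $W^{2,p}$ (any $p<\infty$), and hence $C^{1,\alpha}$, bounds on $u(t)$.

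The main obstacle is upgrading to $C^{2,\alpha}$, which requires a Hölder bound on $S$, equivalently on $\Delta_0 u$. For this I would appeal to parabolic DeGiorgi-Nash-Moser or Krylov-Safonov estimates for the quasilinear parabolic equation satisfied by $u$: uniform parabolicity is supplied by the positive lower bound on $-f'(S)$ on the controlled range of $S$ together with the two-sided $C^0$ bound on $u$. This produces a uniform $C^{0,\alpha}$ estimate on $u$ (and thus on $S$), and Schauder theory applied to the elliptic identity above delivers the required uniform $C^{2,\alpha}$ bound. The extension criterion then forces $T^\ast=\infty$, and the positive smooth solution exists for all time.
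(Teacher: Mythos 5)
Your overall architecture --- extension criterion, maximum-principle bounds $\min\{\tilde S_{0,\min},0\}\le S\le C(T)$, two-sided $C^0$ bounds on $u$ from $|\partial_t\log u|\le\tfrac{n-2}{4}\bigl(f(\min\{\tilde S_{0,\min},0\})-\inf f\bigr)$, an elliptic bootstrap through $W^{2,p}$ and $C^{1,\alpha}$, and a Krylov--Safonov step to reach $C^{2,\alpha}$ --- is essentially the paper's: it reduces the theorem to the scalar-negative long-time argument, with the boundedness of $f$ supplying the exponential-in-$t$ upper bound on $S_{\max}$ in place of the trapping of $S$ in $[S_{0,\min},S_{0,\max}]$. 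Up to and including the uniform $C^{1,\alpha}$ bound on $u$ your argument is correct.

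The gap is in the last step. You correctly identify that what is needed is a H\"older bound on $S$ (equivalently on $\Delta_0u$), but you propose to obtain it by applying Krylov--Safonov to ``the quasilinear parabolic equation satisfied by $u$'' and then assert that the resulting $C^{0,\alpha}$ bound on $u$ gives one ``and thus on $S$''. That inference fails: $S=u^{-\beta}(S_0u-c_n\Delta_0u)$ involves two derivatives of $u$, so H\"older continuity of $u$ (which you already have, indeed $C^{1,\alpha}$) gives no H\"older control of $S$, and without $S\in C^\alpha$ the Schauder step for the elliptic identity has nothing to feed on --- the argument is circular there. Note also that the equation for $u$ is fully nonlinear in $\Delta_0u$ (through $f$), so interior $C^{2,\alpha}$ estimates are not supplied by De Giorgi--Nash--Moser or Krylov--Safonov and would in general require an Evans--Krylov-type concavity hypothesis that $f$ need not satisfy. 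The correct move, which is what the paper does, is to apply Krylov--Safonov to the evolution equation satisfied by $f(S)$: multiplying $\partial_tS=-(n-1)\Delta f(S)-S(f(S)-A)$ by $f'(S)$ gives
\[
\bigl(\partial_t+(n-1)f'(S)\Delta\bigr)f(S)=-Sf'(S)\,(f(S)-A),
\]
a linear uniformly parabolic equation for $f(S)$ with bounded measurable coefficients: uniform ellipticity of $f'(S)\Delta$ follows from $f'(S)\le-c(T)<0$ on the compact range of $S$ together with your $C^{1,\alpha}$ bound on $u$ (needed to control the first-order terms when $\Delta$ is rewritten through $\Delta_0$), and the right-hand side is bounded. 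Krylov--Safonov then gives $f(S)\in C^\alpha$ uniformly in time, and the mean value theorem combined with $f'\le-c(T)$ converts this into a uniform $C^\alpha$ bound on $S$ itself; the elliptic identity then yields $\Delta_0u\in C^\alpha$, i.e.\ the desired $C^{2,\alpha}$ bound. With that substitution your proof closes.
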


We also offer a partial result towards the long time existence in the scalar flat and positive cases under milder assumptions than boundedness of $f$. See also the discussion in Section \ref{Section:Positive} for some thoughts about pushing this further to also give an upper bound on $S$ along the flow.

\begin{Prop}[Flat case]
Assume that $S_0=0$ and that $f$ is strictly decreasing. Let $u\in C^\infty((0,T)\times M)$ be a solution to the generalized Yamabe flow with $u_0\in C^\infty(M)$ positive. Then 
\[S_{\min}\geq \tilde{S}_{0,\min}\]
and there is a constant $c>0$ which does not depend on $T$ such that 
\[c^{-1}\leq u\leq c.\]
\end{Prop}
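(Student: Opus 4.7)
My plan is to prove the lower bound on $S$ by a parabolic maximum principle applied to its evolution equation, and then to deduce the $u$-bounds by showing that the ratio $u_{\max}/u_{\min}$ is nonincreasing along the flow and invoking conservation of volume.

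A direct conformal-flow computation (using $\partial_t R = -2(n-1)\Delta_g \psi - 2\psi R$ for $\partial_t g = 2\psi g$) gives
\[
\partial_t S \;=\; -(n-1)\Delta_g f(S) + (A-f(S))\,S
\;=\; -(n-1)f'(S)\Delta_g S - (n-1)f''(S)|\nabla_g S|^2 + (A-f(S))\,S,
\]
which is quasilinear parabolic since $-f'(S)>0$. Moreover $S_0=0$ gives $S = -\tfrac{4(n-1)}{n-2}u^{-(n+2)/(n-2)}\Delta_{g_0}u$, and integrating against $d\Vol_{g_0}$ yields $\int_M S\,u^{(n+2)/(n-2)}\,d\Vol_{g_0}=0$; hence $S(\cdot,t)$ changes sign at every time, so $S_{\min}(t)\le 0$ and in particular $\tilde{S}_{0,\min}\le 0$. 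To prove $S\ge\tilde{S}_{0,\min}$, fix $\epsilon>0$ and set $w_\epsilon:=S-\tilde{S}_{0,\min}+\epsilon t$. If $w_\epsilon$ became negative somewhere, then at the first contact point $(x_*,t_*)$ one would have $w_\epsilon(x_*,t_*)=0$, $\partial_t w_\epsilon(x_*,t_*)\le 0$, $\nabla_g S=0$, $\Delta_g S\ge 0$, and $S(x_*,t_*)=\tilde{S}_{0,\min}-\epsilon t_*\le 0$. Since $x_*$ is the spatial minimum of $S$, the function $f\circ S$ attains its spatial maximum there, so $A\le f(S(x_*,t_*))$ and hence $(A-f(S))\,S\ge 0$. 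Combined with $-(n-1)f'(S)\,\Delta_g S\ge 0$, the evolution equation forces $\partial_t S(x_*,t_*)\ge 0$, whence $\partial_t w_\epsilon\ge \epsilon>0$, contradicting $\partial_t w_\epsilon\le 0$. Sending $\epsilon\to 0$ gives $S_{\min}(t)\ge \tilde{S}_{0,\min}$.

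For the uniform control of $u$, note $\partial_t u=\tfrac{n-2}{4}(f(S)-A)\,u$. At a spatial maximum $x_{\max}(t)$ of $u$ the sign $\Delta_{g_0}u\le 0$ together with the formula for $S$ forces $S(x_{\max})\ge 0$, so $f(S(x_{\max}))\le f(0)$; symmetrically $f(S(x_{\min}))\ge f(0)$ at a spatial minimum $x_{\min}(t)$. Hamilton's envelope trick then yields
\[
\frac{d}{dt}\log u_{\max}(t)\;\le\;\tfrac{n-2}{4}\bigl(f(0)-A(t)\bigr)\;\le\;\frac{d}{dt}\log u_{\min}(t),
\]
so $u_{\max}/u_{\min}$ is nonincreasing and bounded above by $C_0:=u_{0,\max}/u_{0,\min}$ for all $t\in[0,T)$. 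Conservation of volume, built into the choice of $A$, makes $V_0:=\int_M u^{2n/(n-2)}\,d\Vol_{g_0}$ constant; the sandwich $u_{\min}^{2n/(n-2)}\,\Vol_{g_0}(M)\le V_0\le u_{\max}^{2n/(n-2)}\,\Vol_{g_0}(M)$ combined with $u_{\max}\le C_0\,u_{\min}$ produces uniform bounds $c^{-1}\le u\le c$ with $c$ depending only on $u_0$, $g_0$, and $n$, i.e.\ independent of $T$. The main technical subtlety is the first-contact argument in the presence of the nonlocal term $A$; this is handled by the key observation that at any spatial minimum of $S$ one has $A\le f(S_{\min})$, so the zero-order contribution has the right sign as soon as $S_{\min}\le 0$, which is automatic when $S_0=0$.
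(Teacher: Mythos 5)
Your proof is correct and follows essentially the same route as the paper: the integral identity $\int_M S\,u^{\beta}\,d\Vol_{g_0}=0$ to force $S_{\min}\le 0$ and $\tilde{S}_{0,\min}\le 0$, the maximum principle at the spatial minimum of $S$ (where $A\le f(S_{\min})$, so the zero-order term has the right sign) to propagate the lower bound, and Ye's Harnack-ratio argument combined with volume conservation for the two-sided bound on $u$. The only cosmetic difference is that you phrase the maximum principle as an $\epsilon$-perturbed first-contact argument rather than via Hamilton's trick for the Lipschitz function $S_{\min}(t)$.
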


\begin{Prop}[Positive case]
Assume $S_0\geq 0$ and that $f$ is strictly decreasing on $[0,\infty)$. Assume there are constants $a,b\geq 0$ such that $f(x)\geq -ax^{\frac{n}{2}} -b$ for $x\geq 0$. Let $u\in C^\infty((0,T)\times M)$ be the solution to \eqref{eq:GYF} with $u_0=1$ for some $T<\infty$. Then there is a constant $C_T>0$ such that 
\[
C_T^{-1}\leq u\leq C_T, \quad S_{\min}\geq S_{0,\min}e^{-C_T}.
\]
\end{Prop}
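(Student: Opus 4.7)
The proof breaks into three tasks: a pointwise lower bound on $S_{\min}$, an upper bound on $u$, and a lower bound on $u$, treated in this order.

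For $S_{\min}$: derive the evolution $\partial_t S=-(n-1)f'(S)\Delta_g S-(n-1)f''(S)|\nabla_g S|^2-(f(S)-A)S$ from the variation of $\Scal$ under the conformal flow $\partial_t g=(f(S)-A)g$. At a spatial minimum of $S$ one has $\nabla_g S=0$ and $\Delta_g S\geq0$, and since $f'<0$ the diffusion term is non-negative, giving $\partial_t S_{\min}\geq S_{\min}(A-f(S_{\min}))$. Monotonicity of $f$ yields $A\leq f(S_{\min})$, so the right hand side is non-positive and $S\geq 0$ is preserved along the flow. To make this quantitative I control $A$ uniformly. The bound $A\leq f(0)$ is immediate from $S\geq0$; for a lower bound, a short integration by parts produces the monotonicity
\begin{align*}
\partial_t\int_M S^{n/2}\,d\Vol_g=\frac{n(n-1)(n-2)}{4}\int_M S^{n/2-2}f'(S)|\nabla_g S|^2\,d\Vol_g\leq 0,
\end{align*}
and the growth hypothesis $f(x)\geq-ax^{n/2}-b$ then gives $A\geq A_*:=-a\,\Vol_{g_0}^{-1}\int_M S_0^{n/2}\,d\Vol_{g_0}-b$. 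Feeding both bounds into $\partial_t\log S_{\min}\geq A-f(S_{\min})\geq A_*-f(0)$ and integrating yields $S_{\min}(t)\geq S_{0,\min}e^{-C_T}$ with $C_T:=|A_*-f(0)|T$.

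The upper bound on $u$ is now immediate: $f(S)-A\leq f(0)-A_*$ pointwise, so integrating $\partial_t\log u=\tfrac{n-2}{4}(f(S)-A)$ from $u_0\equiv 1$ gives $u(t,x)\leq\exp\bigl(\tfrac{n-2}{4}(f(0)-A_*)T\bigr)$, which I absorb into $C_T$.

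The lower bound on $u$ is the main obstacle. A pointwise maximum-principle argument for $u_{\min}$ fails because at a spatial minimum of $u$ the conformal formula $Su^{(n+2)/(n-2)}=-\tfrac{4(n-1)}{n-2}\Delta_{g_0}u+S_0 u$ yields only $S\leq S_{0,\max}u_{\min}^{-4/(n-2)}$, and combining with $f\geq-ax^{n/2}-b$ produces a differential inequality $\dot y\geq-C_1y^{-(n+2)/(n-2)}-C_2y$ whose comparison ODE reaches zero in finite time, so a genuinely global argument is needed. The plan is a parabolic Moser iteration on $u^{-p}$: compute $\partial_t\int_M u^{-p}\,d\Vol_g$, use $f(S)\geq-aS^{n/2}-b$ and the uniform bound on $\int_M S^{n/2}\,d\Vol_g$, then apply H\"older and the conformal Sobolev inequality together with the already-established upper bound on $u$ to close the iteration; letting $p\to\infty$ produces $u\geq C_T^{-1}$ pointwise. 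Closing the iteration is the principal difficulty because the growth exponent $n/2$ in the hypothesis on $f$ is exactly the critical Sobolev exponent --- the same borderline situation familiar from the classical positive-scalar Yamabe flow.
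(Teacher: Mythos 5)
Your handling of the first two assertions is sound and coincides with the paper's own argument: the lower bound $A\geq A_*$ is obtained there exactly as you do it, from the monotonicity of $\int_M S^{n/2}\,d\Vol_g$ combined with the growth hypothesis $f(x)\geq -ax^{n/2}-b$; the bound $S_{\min}\geq S_{0,\min}e^{-C_T}$ then follows from the maximum principle applied to the evolution of $S$ exactly as you describe; and the upper bound on $u$ is the same one-line integration of $\partial_t\log u=\tfrac{n-2}{4}(f(S)-A)\leq\tfrac{n-2}{4}(f(0)-A_*)$.

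The genuine gap is the lower bound on $u$. You correctly diagnose that a pointwise ODE comparison at $u_{\min}$ fails, but what you offer in its place is only a plan --- a parabolic Moser iteration on $u^{-p}$ --- and you concede yourself that closing the iteration at the critical exponent $n/2$ is unresolved. As written, one third of the proposition is therefore unproved. The difficulty you anticipate is in fact avoidable, because the paper's argument is elliptic and works on each time slice separately. Since $S\geq 0$ is preserved, the conformal formula $-c_n\Delta_0 u+S_0u=u^{\beta}S\geq 0$ gives, for any $\A>0$,
\[-c_n\Delta_0 u^{-\A}\leq \A c_n u^{-\A-1}\Delta_0 u=\A S_0u^{-\A}-\A u^{\beta-\A-1}S\leq \A S_0 u^{-\A},\]
because the gradient term and the term containing $S$ both have favourable signs and can simply be discarded. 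Thus $v=u^{-\A}$ is a subsolution of the \emph{fixed, time-independent} elliptic operator $c_n\Delta_0+\A S_0$ on the compact manifold $(M,g_0)$, and a Harnack-type (local boundedness) estimate for that operator yields $\sup_M u^{-\A}\leq C\,u_{\max}^{-\A}$ with $C$ depending only on $g_0$, $S_0$ and $\A$. Volume preservation, $1=\Vol_g=\int_M u^{2n/(n-2)}\,d\Vol_{g_0}\leq u_{\max}^{2n/(n-2)}$, gives $u_{\max}\geq 1$, whence $u^{-\A}\leq C$ uniformly --- a bound that does not even depend on $T$. No parabolic iteration is needed and no critical-exponent obstruction arises; the hypothesis $f(x)\geq-ax^{n/2}-b$ enters only through the bound on $A$, not through the lower bound on $u$. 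You should either carry out and close your Moser iteration in detail or replace that step by this elliptic subsolution argument.
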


When $f$ is bounded, the proofs simplify quite a bit compared to the classical Yamabe flow $f(S)=-S$. For instance, when $f$ is bounded, \eqref{eq:YF} gives and immediate bound on $g(t)$ from below and above for all finite times. The upper bound on the scalar curvature (for finite time) when the initial scalar curvature is positive also simplifies as follows. For the classical flow, one cannot directly apply the maximum principle (there is an unfavourable sign) to deduce an upper bound on $S$. Instead, Schwetlick and Struwe \cite[pp. 68-71]{SS} get a bound on $\norm{S}_{L^p(M,g)}$ and $\norm{f(S)-A}_{L^p(M,g)}$ (when $f(S)=-S$) by carefully analysing the evolution equations of $S$. This is built upon by Brendle, \cite[Lemma 2.2, 2.3, 2.5]{Brendle}.  When $f$ is bounded, the maximum principle applied to $\log(S)$ \textit{does} give an upper bound, see Lemma \ref{Lem:SUpperBund} below.

 The structure of the paper is as follows. We start by going over some generalities of conformal flows in section \ref{Section:Flows}. Here we also deduce several evolution equations. In section \ref{sec:NonNormal} we show the short time existence of the flow by studying its linearization. The sections \ref{Section:Negative}, \ref{Section:Flat} and \ref{Section:Positive} are dedicated to the scalar negative, flat and positive case respectively. \medskip

\noindent \emph{Acknowledgements.} The second author thanks Eric Bahuaud for interesting discussions 
on the idea of generalized Yamabe flows. The third author thanks the University of Oldenburg for hospitality and 
CAPES/8881.700909/2022-01 for funding the visit to Oldenburg, during which this work was commenced. We would like to thank the referee carefully reading the paper and making several good suggestions for improvements, and for pointing out misprints.

\section{Fundamental properties of generalized Yamabe flows}
\label{Section:Flows}
Our setting is still a compact Riemannian manifold $(M,g_0)$ of dimension $n\geq 3$.  We will for convenience assume $\Vol_{g_0}=1$.  Let $f$ be some $C^2$-function and let $S$ denote the scalar curvature of $g$. 
We consider the generalised Yamabe flow as defined in \eqref{eq:GYF}, namely 
\[
\partial_t g= (f(S)-A)g, \quad g(0)=u_0^{\frac{4}{n-2}}g_0, \quad A\coloneqq\Vol_g^{-1} \int_M f(S)\, d\Vol_g.
\]
Without further assumptions on $f$, we neither know the existence nor uniqueness of solutions to \eqref{eq:GYF}. But assuming for the moment that such a flow exists, one can deduce several useful evolution equations of other geometric quantities like the volume form and the scalar curvature: \medskip

\begin{enumerate}
\item In terms of the conformal factor $u$, the flow equation \eqref{eq:GYF} is
\begin{equation}
\partial_t u=\frac{n-2}{4} (f(S)-A)\cdot u.
\label{eq:uEq}
\end{equation}

\item The volume element evolves according to
\begin{equation}
\partial_t d\Vol_{g}=\frac{n}{2} (f(S)-A)\, d\Vol_{g}
\label{eq:VolumElemEvol}
\end{equation}

\item Since the metrics $g$ and $g_0$ are related via 
\begin{equation}
g(t)=u(t)^{\frac{4}{n-2}}g_0,
\label{eq:ConfChange}
\end{equation}
 one can write down a neat formula for how the scalar curvatures relate, namely
\begin{equation}
S=u^{-\frac{n+2}{n-2}} \left(S_0\cdot u-\frac{4(n-1)}{n-2}\Delta_0 u\right)\eqqcolon u^{-\frac{n+2}{n-2}} L(u).
\label{eq:ConfScalarCurv}
\end{equation}
Here the (negative) Laplacian $\Delta_0$ is with respect to the initial metric $g_0$ and $L$ is called the conformal Laplacian.
\end{enumerate} \ \medskip

\begin{Lem}
\label{Lem:ScalarEvol}
The scalar curvature evolves along \eqref{eq:GYF} according to
\begin{equation}
\partial_t S=-(n-1) u^{-\frac{4}{n-2}}\left(\Delta_0 f(S) +2\frac{\ip{\nabla f(S)}{\nabla u}}{u}\right) -S(f(S)-A).
\label{eq:ScalarCurvEvol}
\end{equation}
The inner product is with respect to the initial metric $g_0$. 
Writing $\Delta$ for the Laplacian of $g$, this can also be written
\begin{align}
\partial_t S&=-(n-1)\Delta f(S) -Sf(S)+AS\label{eq:IntrinsicScalarCurvEvol1} \\&=-(n-1)\left(f'(S)\Delta S+f''(S) \vert \nabla S\vert^2_{g}\right) -Sf(S)+AS.
\label{eq:IntrinsicScalarCurvEvol2}
\end{align}
\end{Lem}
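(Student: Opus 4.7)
The plan is to start from the conformal curvature formula \eqref{eq:ConfScalarCurv}, namely $S \, u^{\frac{n+2}{n-2}} = L(u) = S_0 u - \frac{4(n-1)}{n-2}\Delta_0 u$, and differentiate in $t$. On the left this gives
\[
\partial_t S \cdot u^{\frac{n+2}{n-2}} + \frac{n+2}{n-2}\, S\, u^{\frac{4}{n-2}}\, \partial_t u,
\]
and on the right $L(\partial_t u)$, since $L$ is time-independent. I would then substitute $\partial_t u = \frac{n-2}{4}(f(S)-A)u$ from \eqref{eq:uEq}, abbreviate $\phi := f(S)-A$, and expand $L(\phi u) = S_0 \phi u - \frac{4(n-1)}{n-2}\Delta_0(\phi u)$ via the Leibniz rule $\Delta_0(\phi u) = u\Delta_0 \phi + \phi \Delta_0 u + 2\langle \nabla \phi,\nabla u\rangle_{g_0}$.

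The key cancellation is to use \eqref{eq:ConfScalarCurv} once more to eliminate $\Delta_0 u$ in favour of $S$ and $S_0$: $\Delta_0 u = \frac{n-2}{4(n-1)}\bigl(S_0 u - S\, u^{\frac{n+2}{n-2}}\bigr)$. Substituting this into the expansion cancels the $S_0 \phi u$ terms, and after dividing through by $u^{\frac{n+2}{n-2}}$ one is left with
\[
\partial_t S = -S(f(S)-A) - (n-1) u^{-\frac{4}{n-2}}\!\left(\Delta_0 f(S) + 2\,\frac{\langle \nabla f(S),\nabla u\rangle_{g_0}}{u}\right),
\]
which is \eqref{eq:ScalarCurvEvol}. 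The only slightly careful bookkeeping step is tracking the exponents of $u$ so the $\phi S$ contributions combine into a single $-S(f(S)-A)$; this is the one place where one has to be patient.

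To pass from \eqref{eq:ScalarCurvEvol} to \eqref{eq:IntrinsicScalarCurvEvol1}, I would invoke the standard conformal change formula for the Laplacian on functions: writing $g = e^{2\sigma}g_0$ with $\sigma = \frac{2}{n-2}\log u$, one has $\Delta_g h = e^{-2\sigma}\bigl(\Delta_0 h + (n-2)\langle \nabla \sigma,\nabla h\rangle_{g_0}\bigr)$. Plugging in $\nabla \sigma = \frac{2}{n-2}\, u^{-1}\nabla u$ yields $\Delta_g h = u^{-\frac{4}{n-2}}\bigl(\Delta_0 h + 2\,u^{-1}\langle\nabla u,\nabla h\rangle_{g_0}\bigr)$, which is exactly the parenthesized expression (with $h = f(S)$) in \eqref{eq:ScalarCurvEvol}. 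This recasts the formula as $\partial_t S = -(n-1)\Delta f(S) - Sf(S) + AS$.

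Finally, \eqref{eq:IntrinsicScalarCurvEvol2} follows from the chain and product rules: since $\nabla f(S) = f'(S)\nabla S$, one has $\Delta f(S) = \mathrm{div}_g(f'(S)\nabla S) = f'(S)\Delta S + f''(S)|\nabla S|_g^2$. No obstacle here; the only genuine labour is the algebraic cancellation in the first step, and all of the needed tools (the identities \eqref{eq:uEq} and \eqref{eq:ConfScalarCurv}, and the conformal Laplacian transformation law) are already on the table.
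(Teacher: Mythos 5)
Your proposal is correct and follows essentially the same route as the paper: differentiate the conformal scalar curvature identity \eqref{eq:ConfScalarCurv} in time, substitute the flow equation \eqref{eq:uEq}, use \eqref{eq:ConfScalarCurv} again to cancel the $S_0$ terms, and then pass to the intrinsic Laplacian via the conformal change formula \eqref{LaplacianRelation}. The only cosmetic difference is that you differentiate $S\,u^{\frac{n+2}{n-2}}=L(u)$ rather than $S=u^{-\frac{n+2}{n-2}}L(u)$, which leads to the identical cancellation.
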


\begin{proof}
Both $\Delta_0$ and $S_0$ in \eqref{eq:ConfScalarCurv} are $t$-independent, so one deduces
\begin{align}
\partial_tS&\stackrel{\eqref{eq:ConfScalarCurv}}{=} -\frac{n+2}{n-2} \frac{\partial_t u}{u} S+ u^{-\frac{n+2}{n-2}} \left(S_0 \partial_t u -\frac{4(n-1)}{n-2} \Delta_{0} \partial_t u\right) \notag \\
&\stackrel{\eqref{eq:uEq}}{=}  -\frac{n+2}{4}  (f(S)-A)S+ u^{-\frac{n+2}{n-2}} \left(\frac{n-2}{4} S_0 u(f(S)-A)  -(n-1) \Delta_0 ((f(S)-A) u)\right). \notag
\end{align} 
We use \eqref{eq:ConfScalarCurv} to write $S_0u=S u^{\frac{n+2}{n-2}}+\frac{4(n-1)}{n-2} \Delta_0 u$ and insert this back into the above expression.
\begin{align}
\partial_tS&=-\frac{n+2}{4}  (f(S)-A)S+\frac{n-2}{4} (f(S)-A)S\notag 
\\&-(n-1)u^{-\frac{n+2}{n-2}}\left( \Delta_0((f(S)-A)u)-(f(S)-A) \Delta_0 u\right)\notag\\
&=-(f(S)-A)S -(n-1)u^{-\frac{n+2}{n-2}}\left( u\Delta_0 f(S)+2\ip{\nabla f(S)}{\nabla u}\right),
\end{align}
where we have also observed in the last step that $\nabla f_0=\nabla f$.
The rewriting makes use of the formula 
\begin{equation}\label{LaplacianRelation}
\Delta \xi=u^{-\frac{4}{n-2}}\left(\Delta_0 \xi +\frac{2}{u}\ip{\nabla u}{\nabla \xi}\right),
\end{equation}
which valid for any smooth function $\xi$.
\end{proof}

\noindent We note some immediate consequences via the maximum principle.
\begin{Cor}\label{scalar-min-max}
Assume  $f'(x)\leq 0$. Let $S_{\min}(t)$ and $S_{\max}(t)$ denote minima and maxima of $S$ at a given time $t$.
Then the following bounds hold.

\begin{itemize}
\item If $S_{\max}\leq 0$, then $\partial_t S_{\max}\leq 0$. \\[-3mm]

\item If  $S_{\min}\leq 0$, then $\partial_t S_{\min}\geq 0$.\\[-3mm]

\item If $\tilde{S}_{0,\min}\geq 0$, then $S_{\min}\geq 0$ (but not necessarily $\partial_t S_{\min}\geq 0$). 
\end{itemize}
\end{Cor}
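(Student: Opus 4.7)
The plan is to apply the parabolic maximum principle to the evolution equation \eqref{eq:IntrinsicScalarCurvEvol2}, treating it as a nonlinear heat equation for $S$. At a spatial maximum of $S$ one has $\nabla S=0$ and $\Delta S\leq 0$; at a spatial minimum, $\nabla S=0$ and $\Delta S\geq 0$. In either case the gradient-squared term $-(n-1)f''(S)|\nabla S|_g^2$ in \eqref{eq:IntrinsicScalarCurvEvol2} drops out, so the evolution at extremal points reduces to
\begin{equation*}
\partial_t S = -(n-1)f'(S)\,\Delta S + S\bigl(A-f(S)\bigr).
\end{equation*}

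First I would observe the monotonicity consequence of $f'\leq 0$: since $f$ is non-increasing, $f(S_{\max})\leq f(S(x))\leq f(S_{\min})$ pointwise, and averaging against $d\Vol_g$ gives $f(S_{\max})\leq A\leq f(S_{\min})$. Hence $A-f(S_{\max})\geq 0$ and $A-f(S_{\min})\leq 0$ at all times. For the first bullet, at a point realizing $S_{\max}$ the term $-(n-1)f'(S_{\max})\Delta S\leq 0$ (product of two non-positive factors, negated); combined with $S_{\max}(A-f(S_{\max}))\leq 0$ whenever $S_{\max}\leq 0$, Hamilton's ODE comparison for $t\mapsto S_{\max}(t)$ yields $\partial_t S_{\max}\leq 0$. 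For the second bullet, at a minimum point $-(n-1)f'(S_{\min})\Delta S\geq 0$, and $S_{\min}(A-f(S_{\min}))\geq 0$ whenever $S_{\min}\leq 0$; again the ODE comparison gives $\partial_t S_{\min}\geq 0$.

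For the third bullet, the preservation statement $\tilde S_{0,\min}\geq 0\Rightarrow S_{\min}(t)\geq 0$ is a standard scalar maximum principle argument: suppose for contradiction that $S_{\min}$ drops below zero, and let $t_0>0$ be the first time $S_{\min}(t_0)=0$. At $t_0$ and at a spatial minimizer the evolution equation reduces to $\partial_t S = -(n-1)f'(0)\Delta S \geq 0$ since $f'(0)\leq 0$ and $\Delta S\geq 0$ at the minimum. A small perturbation argument (replacing $S$ by $S+\varepsilon t$ to get strict inequalities, then letting $\varepsilon\to 0$) then rules out $S_{\min}$ becoming negative, which is the contradiction. The parenthetical remark in the statement is explained by the fact that after $S_{\min}$ reaches a positive value the term $S_{\min}(A-f(S_{\min}))$ has sign $\leq 0$ and may dominate, so $S_{\min}$ need not be monotone; only the barrier at $0$ is preserved.

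The only mildly delicate step is the Hamilton-type comparison needed to pass from the pointwise inequality at extremal points to the ODE inequality for $S_{\max}(t)$ and $S_{\min}(t)$, since these are a priori only Lipschitz functions of $t$ and require taking upper/lower Dini derivatives; but this is a standard tool in Ricci and Yamabe flow analysis and introduces no new technical obstacle here.
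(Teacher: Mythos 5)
Your argument is correct and essentially the same as the paper's: both apply the maximum principle to the evolution equation for $S$, using $f'\leq 0$ to control the sign of the second-order term at extrema (the paper works with $\Delta f(S)$ directly, you expand it as $f'(S)\Delta S+f''(S)|\nabla S|^2$ and drop the gradient term at critical points) together with the bracketing $f(S_{\max})\leq A\leq f(S_{\min})$ for the zeroth-order term. The only difference is in the third bullet, where the paper integrates the differential inequality $\partial_t S_{\min}\geq -S_{\min}(f(S_{\min})-A)$ against the factor $\exp\left(\int_0^t(f(S_{\min})-A)\,d\tau\right)$ instead of your first-crossing/perturbation sketch; both are standard and yield the same conclusion.
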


\begin{proof}
The evolution equation \eqref{eq:IntrinsicScalarCurvEvol1} asserts
\[
\partial_t S=-(n-1)\Delta f(S)-S (f(S)-A).
\]
Since $f$ is decreasing, $S_{\min}$ is a maximum for $f(S)$. Similarly, 
$S_{\max}$ is a minimum for $f(S)$. Hence $\Delta f(S_{\min})\leq 0$ and 
$\Delta f(S_{\max})\geq 0$ and we conclude
\begin{equation}\label{max-ineq}
\begin{split}
&\partial_t S_{\min}\geq -S_{\min} (f(S_{\min})-A)), \\
&\partial_t S_{\max}\leq -S_{\max} (f(S_{\max})-A)).
\end{split}
\end{equation}
Since $f$ is decreasing we get $f(S_{\max})\leq f(S)\leq f(S_{\min})$, hence also $f(S_{\max})\leq A\leq f(S_{\min})$. 
This shows that 
\begin{equation}
f(S_{\min})-A\geq 0, \quad f(S_{\max})-A\leq 0.
\end{equation}
This shows the first two statements by studying the signs of 
the terms on the right hand side of \eqref{max-ineq}. Note that $S_{\min}\geq 0$
does not imply $\partial_t S_{\min}\geq 0$. Hence the third statement follows
by a different argument. Namely, we let 
\[\omega(t)\coloneqq S_{\min}(t)\exp\left(\int_0^t (f(S_{\min})-A)\, d\tau\right).\]
The differential inequality \eqref{max-ineq} for $\partial_t S_{\min}$ then says
$\partial_t \omega\geq 0$, hence
\[
S_{\min}(t)\exp\left(\int_0^t (f(S_{\min})-A)\, d\tau\right)=\omega(t)\geq \omega(0) = \tilde{S}_{0,\min}.
\]
This says that $S_{\min}$ stays non-negative as long as the flow exists.
\end{proof}

\noindent We obtain a couple of immediate consequences.

\begin{Cor}
Assume the metric evolves according to \eqref{eq:uEq} with $f'(x)\leq 0$.
\begin{enumerate}
\item Assume that the initial scalar curvature $\tilde{S}_0$ is everywhere non-positive. 
Then the scalar curvature $S$ stays non-positive along the flow.
\item Assume that the initial scalar curvature $\tilde{S}_0$ is everywhere non-negative. 
Then the scalar curvature $S$ stays non-negative along the flow.
\end{enumerate} 
\end{Cor}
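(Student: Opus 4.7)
The plan is to read off both statements from the previous corollary by reusing the integrating–factor trick that appeared in its third bullet, applied once to $S_{\min}$ and once to $S_{\max}$.

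For part (2), the claim is essentially the third bullet of Corollary \ref{scalar-min-max} at the pointwise level: since $\tilde{S}_0\geq 0$ everywhere, we have $\tilde{S}_{0,\min}\geq 0$, and Corollary \ref{scalar-min-max} already concludes $S_{\min}(t)\geq 0$ for all $t$ in the existence interval. I would just quote that bullet, noting that $S(t,\cdot)\geq S_{\min}(t)\geq 0$.

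For part (1), I would mimic the same argument with $S_{\max}$ in place of $S_{\min}$. From \eqref{max-ineq} we have the differential inequality
\[
\partial_t S_{\max}\leq -S_{\max}(f(S_{\max})-A).
\]
Define the integrating factor
\[
\omega(t)\coloneqq S_{\max}(t)\exp\!\left(\int_0^t (f(S_{\max})-A)\,d\tau\right).
\]
A direct computation shows $\partial_t \omega\leq 0$, so $\omega(t)\leq \omega(0)=\tilde{S}_{0,\max}\leq 0$. Since the exponential factor is strictly positive, this forces $S_{\max}(t)\leq 0$, and hence $S(t,\cdot)\leq S_{\max}(t)\leq 0$.

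There is no real obstacle beyond making sense of $\partial_t S_{\max}$; as in the previous corollary this is justified by a standard upper–envelope (Hamilton) argument for the maximum principle on a compact manifold, or by regularising $S_{\max}$ through $\max$ over a smooth approximation — nothing new is needed relative to what was used to prove \eqref{max-ineq}.
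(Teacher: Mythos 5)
Your proposal is correct and matches the paper's (implicit) argument: the corollary is stated as an immediate consequence of Corollary \ref{scalar-min-max}, with part (2) being exactly its third bullet and part (1) following from the differential inequality \eqref{max-ineq} for $S_{\max}$. Your integrating-factor computation for $S_{\max}$ is just the explicit, rigorous form of the barrier argument behind the first bullet ($S_{\max}\leq 0\Rightarrow\partial_t S_{\max}\leq 0$), so nothing is missing.
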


If one further assumes $f$ to be bounded, one also gets an upper bound on $S$ even if $\tilde{S}_{0,\max}>0$.
\begin{Lem}
\label{Lem:SUpperBund}
Assume the metric evolves according to \eqref{eq:uEq} with $f'(x)\leq 0$ and that $f$ is bounded. Assume $\tilde{S}_{0,\max}>0$. Then there is a constant $C>0$ depending on $f$ such that
\[S_{\max}\leq \tilde{S}_{0,\max}e^{Ct}.\]
\end{Lem}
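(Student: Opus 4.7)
The hint in the paper's remark is to apply the maximum principle to $\log S$, so the natural plan is to obtain a pointwise differential inequality for $S_{\max}$ and then integrate.

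First I would apply Lemma \ref{Lem:ScalarEvol} at a point $p_t \in M$ where $S(t,\cdot)$ attains its spatial maximum. Using the intrinsic form \eqref{eq:IntrinsicScalarCurvEvol2} and the fact that at $p_t$ one has $\nabla S = 0$ and $\Delta S \leq 0$, together with the hypothesis $f'(S) \leq 0$, the Laplacian term satisfies $-(n-1) f'(S) \Delta S \geq 0$ and the gradient term vanishes. This reproduces the bound already obtained in \eqref{max-ineq}, namely
\[
\partial_t S_{\max} \leq -S_{\max}\bigl(f(S_{\max}) - A\bigr).
\]
(As in the proof of Corollary \ref{scalar-min-max}, the non-smoothness of $S_{\max}$ in $t$ is handled by the standard maximum-principle argument for Lipschitz functions, or equivalently by testing against the upper Dini derivative.)

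Next I would exploit boundedness of $f$. Writing $M_f := \sup_{\mathbb{R}} |f|$, the averaged quantity $A$ also satisfies $|A| \leq M_f$, so
\[
\bigl|A - f(S_{\max})\bigr| \leq 2 M_f =: C.
\]
The plan is then to consider the open set $\{t : S_{\max}(t) > 0\}$, which contains $t=0$ by the assumption $\tilde{S}_{0,\max} > 0$. On any connected component of this set, one can divide the differential inequality by $S_{\max}$ to obtain
\[
\partial_t \log S_{\max} \leq A - f(S_{\max}) \leq C.
\]
Integrating from $0$ to $t$ yields $S_{\max}(t) \leq \tilde{S}_{0,\max}\, e^{Ct}$ on such a component.

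Finally, I would observe that if $S_{\max}$ ever reaches zero, Corollary \ref{scalar-min-max} (first bullet) keeps it non-positive afterwards, so the inequality $S_{\max}(t) \leq \tilde{S}_{0,\max} e^{Ct}$ holds trivially for all later times. Hence the bound is valid globally in $t$. The only genuinely delicate ingredient is the use of the pointwise maximum principle for the possibly only Lipschitz function $t \mapsto S_{\max}(t)$, but this is standard and already tacitly used in Corollary \ref{scalar-min-max}; all other steps are direct consequences of boundedness of $f$ and of $f' \leq 0$.
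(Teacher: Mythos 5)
Your proof is correct and follows essentially the same route as the paper: both apply the maximum principle to obtain \eqref{max-ineq}, use boundedness of $f$ to bound $A - f(S_{\max})$ by a constant, and integrate $\partial_t \log S_{\max}$. The only cosmetic difference is that the paper first shifts $f$ to be nonnegative (harmless, since the normalization by $A$ absorbs constants) so the bound is $\sup f$ rather than your $2\sup|f|$; your extra care about the set where $S_{\max}>0$ is a welcome detail the paper leaves implicit.
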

\begin{proof}
Shifting $f$ by a constant does not change the flow, so we may and do assume $f\geq 0$. 
From \eqref{max-ineq}, 
\[\partial_t \log S_{\max}\leq -f(S_{\max})+A\leq A\leq C,\]
where $C$ is an upper bound for $f$. Integrating this gives the claim. 
\end{proof}

\section{Short-time existence of generalized Yamabe flows}
\label{Section:ShortTime}

In this section, we show that the generalized Yamabe flow \eqref{eq:GYF} exists, at least for sufficiently short time. We offer two arguments. The first is a reference to "standard parabolic theory". The other, 
discussed in the appendix \ref{appendix-short-time},  is an idea for a more complete proof, relying only on linear parabolic theory, which may be the key to study generalized Yamabe flows in non-compact settings. 

\subsection{Short-time existence using standard parabolic theory}

The most general statement we could find which applies in our setting is the following.

\begin{Thm}[{\cite[Proposition 15.8.1]{Taylor}}]
\label{taylor-theorem}
Assume the problem
\[\partial_t u=F(t,x,D^2_x u),\quad u(0)=u_0\]
is strongly parabolic with $u_0\in H^s(M)$, $s>\frac{n}{2}+2$. Then there is a unique solution 
\[u\in C([0,T),H^s(M))\cap C^\infty((0,T)\times M),\]
which persists as long as $\norm{u(t)}_{C^{2+r}(M)}$ is bounded, given $r>0$. 
\end{Thm}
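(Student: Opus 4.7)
The plan is to prove this as a standard quasilinear strongly parabolic existence/regularity/continuation result on a closed manifold, in three stages: a contraction mapping for short-time existence, a parabolic bootstrap for interior smoothness, and a continuation argument for the persistence statement.

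First, I would linearize at $u_0$. Strong parabolicity says precisely that the symbol $a_0^{ij}(x) \coloneqq \partial_{p_{ij}} F(0,x,D^2 u_0(x))$ is uniformly elliptic on $M$, so $\mathcal{L}_0 u \coloneqq -a_0^{ij}\partial_i\partial_j u$ generates an analytic semigroup $e^{-t\mathcal{L}_0}$ on $H^s(M)$ with maximal regularity. Since $s > n/2 + 2$, the Sobolev embedding $H^s(M) \hookrightarrow C^2(M)$ makes the nonlinearity $\mathcal{N}(u) \coloneqq F(t,x,D^2 u) + \mathcal{L}_0 u$ well-defined and locally Lipschitz on bounded sets in $C([0,T], H^s(M))$. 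The Duhamel map
\[
\Phi(v)(t) \coloneqq e^{-t\mathcal{L}_0} u_0 + \int_0^t e^{-(t-\tau)\mathcal{L}_0}\, \mathcal{N}(v(\tau))\, d\tau
\]
becomes, for $T$ small, a contraction on a small closed ball around the constant path $t \mapsto u_0$, and Banach's fixed point theorem produces the unique mild solution $u \in C([0,T], H^s(M))$.

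Next, for the interior $C^\infty$ statement I would run a parabolic bootstrap. Once $u \in C([0,T), H^s(M))$ is known, the coefficients $a^{ij}(t,x) \coloneqq \partial_{p_{ij}} F(t,x,D^2 u(t,x))$ are uniformly elliptic and Hölder continuous on every compact subinterval $[t_0,t_1] \subset (0,T)$. Viewing the equation linearly in $u$ and invoking parabolic Schauder theory, then differentiating in $x$ and $t$ and iterating the estimate produces arbitrarily high Hölder regularity in the interior, hence smoothness on $(0,T) \times M$.

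Finally, for persistence I would argue by contradiction. Suppose the maximal existence time is $T^* < \infty$ while $\norm{u(t)}_{C^{2+r}(M)}$ stays bounded as $t \nearrow T^*$. Then Arzelà--Ascoli yields a limit $u_* \in C^{2+r'}(M)$ for any $0 < r' < r$, and the uniform $C^{2+r}$-control preserves both the ellipticity of the linearized symbol (strong parabolicity is stable under small $C^2$-perturbations) and the regularity required to apply the contraction of the first step with initial datum $u_*$; restarting extends the solution beyond $T^*$, contradicting maximality. The main obstacle throughout is reconciling the Sobolev regularity used in the contraction with the Hölder regularity needed to preserve strong parabolicity under perturbation, and the assumption $s > n/2 + 2$ is calibrated exactly so that $H^s(M) \hookrightarrow C^{2+\alpha}(M)$ for a small $\alpha > 0$, making both scales compatible.
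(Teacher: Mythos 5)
First, note that the paper does not prove this statement at all: it is quoted verbatim as \cite[Proposition 15.8.1]{Taylor} and used as a black box, so there is no in-paper argument to compare against. Your three-stage outline (contraction for local existence, parabolic bootstrap for interior smoothness, continuation via the $C^{2+r}$ bound) is the standard architecture, and the bootstrap step is fine as a sketch. However, two of the three stages have genuine gaps as written.

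The first gap is in the contraction. For a \emph{fully nonlinear} equation $\partial_t u = F(t,x,D^2_x u)$, the map $\mathcal{N}(u)=F(t,x,D^2u)+\mathcal{L}_0u$ is locally Lipschitz only as a map $H^s\to H^{s-2}$; subtracting the linearization at $u_0$ does not make the remainder lower order away from $u_0$, it only makes its Lipschitz constant small on a small $C^2$-ball. The Duhamel integral then requires the smoothing bound $\norm{e^{-\sigma\mathcal{L}_0}}_{H^{s-2}\to H^s}\lesssim \sigma^{-1}$, and $\int_0^t\sigma^{-1}\,d\sigma$ diverges, so the fixed-point scheme does not close in $C([0,T];H^s)$ no matter how small $T$ or the ball is. You gesture at ``maximal regularity,'' which is indeed the fix, but then the contraction must be run in the maximal-regularity space (e.g.\ $W^{1,p}([0,T];H^{s-2})\cap L^p([0,T];H^s)$, or a continuous-interpolation space in the Da Prato--Grisvard sense), not in $C([0,T];H^s)$. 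Taylor himself avoids the issue entirely by a different device: he mollifies the equation to an ODE in Banach space, derives \emph{uniform} $H^s$-estimates via Moser (tame) product and commutator estimates together with the ellipticity of $\partial_{p_{ij}}F$, and passes to the limit.

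The second gap is in the continuation argument, and it hides the actual content of the persistence criterion. Restarting from the Arzel\`a--Ascoli limit $u_*\in C^{2+r'}(M)$ requires a local existence theorem with merely H\"older data --- a different theorem from the $H^s$-based one you constructed in step one. If instead you restart from $u(t_0)$ with $t_0<T^*$, the new existence time is controlled by $\norm{u(t_0)}_{H^s}$, so you must rule out that $\norm{u(t)}_{H^s}\to\infty$ as $t\nearrow T^*$ even while $\norm{u(t)}_{C^{2+r}}$ stays bounded. That is exactly the point of the proposition, and it is supplied by a tame a priori estimate of the form $\frac{d}{dt}\norm{u}^2_{H^s}\leq C\left(\norm{u}_{C^{2+r}}\right)\left(1+\norm{u}^2_{H^s}\right)$, proved with Moser estimates. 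Without this quantitative step, the contradiction argument does not go through; with it, the Arzel\`a--Ascoli detour is unnecessary. Your closing remark that $s>\frac{n}{2}+2$ is calibrated to give $H^s\hookrightarrow C^{2+\alpha}$ is correct and is indeed where the two regularity scales meet.
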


This will be applicable to the generalized Yamabe problem, since it is strongly parabolic as long as $f'(S)\leq -c<0$ holds. We will show this below when we compute the linearization. Since we are assuming $f$ to be strictly decreasing, we get such a bound on any compact interval.



\subsection{Non-Normalized Flow}
\label{sec:NonNormal}
Our goal is to ensure short-time existence for the normalized flow \eqref{eq:uEq}. For this, it turns out to be easier to work without the non-local term $A$. We can recover the normalised flow from the non-normalised flow by an extra argument which we sketch at the end.  \medskip

The argument for moving between the normalized and non-normalized flows becomes easier if one assumes $f$ is $\alpha-$homogeneous, by which we mean that there is $\alpha\in \R$ such that $f(\lambda x)-f(\lambda y)=\lambda^{\alpha}(f(x)-f(y))$ for all $\lambda>0$ and $x,y$ (of course, $x,y,\lambda x,\lambda y$ have to be in the domain of $f$).  In this case, we can use a trick by Hamilton,
 and we want to explain this next.  \medskip

Suppose that $v$ is a solution of the flow 
\[\begin{cases} \partial_{t}v=\frac{n-2}{4}f(R)\cdot v \\
v(0)=u_0,\end{cases}\] where $R$ denotes the scalar curvature of $\tilde{g}=v^{\frac{4}{n-2}}g_0$. We refer to the latter flow as the \textit{non-normalized flow}. The idea is to rescale both the solution $v$ and the time $t$.  Define $\eta$ to be
\begin{displaymath}
    \eta(t)= \int_{0}^{t}\frac{1}{\Vol_g(M)}\int_{M}f(R)dVol_{g}d\tau.
    \label{eq:etaDef}
\end{displaymath}
Let $u=\exp\left(-\frac{n-2}{4}\eta\right)v$. Let $S$ denote the scalar curvature of $g=u^{\frac{4}{n-2}}g_0$. Since 
\[g=u^{\frac{4}{n-2}} g_0 = e^{-\eta} v^{\frac{4}{n-2}} g_0=e^{-\eta} \tilde{g}\] and $\eta$ only depends on time, we have $S=e^{\eta} R$. Then we compute 
\begin{displaymath}
\begin{aligned}
    \partial_{t}u&= \left(\partial_{t}v-\frac{n-2}{4}\eta'v\right)\exp\left(-\frac{n-2}{4}\eta\right)\\
    &=\frac{n-2}{4}\left(f(R)v-\eta'v\right)\exp\left(-\frac{n-2}{4}\eta\right)\\
    &=\frac{n-2}{4}\left(f(R)-\eta'\right)\exp\left(-\frac{n-2}{4}\eta\right)v\\
    &=\frac{n-2}{4}\left(f(Se^{-\eta})-\frac{1}{\Vol_g(M)}\int_{M}f(Se^{-\eta})dVol_{g}\right)u.   
\end{aligned}
\end{displaymath}
We now use the assumption that $f$ is $\alpha-$homogeneous (which we do not generally assume - it is just to demonstrate Hamilton's trick).
  Multiply both sides by $e^{\alpha\eta}$ and find
\[e^{\alpha\eta}\partial_t u=\frac{n-2}{4}(f(S)-A)u.\]
Rescaling the time by letting $\tau$ be the solution of $\frac{d\tau}{dt}=e^{-\alpha \eta(t)}$, $\tau(0)=0$,
 we get a solution to the normalized flow
 \[\partial_{\tau} u=\frac{n-2}{4}(f(S)-A)u.\] 
 
\subsection{Linearization of the flow}
\label{Sec:Linearisation}
Let us consider the non-normalized flow
\begin{equation}\label{eq:NonuEq}
\partial_{t}u=f(S)\cdot u, \quad u(0)=u_0.
\end{equation}
Let $\beta\coloneqq \frac{n+2}{n-2}$ and $c_{n}\coloneqq \frac{4(n-1)}{n-2}$. In \eqref{eq:ConfScalarCurv} we saw  
\begin{equation}\label{eq:Constants}
\begin{aligned}
    S=&u^{-\beta}L(u),\\
    L=&S_{0}-c_{n}\Delta_{0}.
\end{aligned}
\end{equation}
\begin{Lem}[Frechét derivative]\label{lemmaFrechet}
Consider $\mathcal{F}(u)=f(S)\cdot u$. Then the Frechét derivative of $\mathcal{F}$ is
\begin{equation}
   D\mathcal{F}(u)= f\left(S\right)+f'\left(S\right)\left(u^{1-\beta}L-\beta S\right). 
   \label{eq:Frechet}
\end{equation}
\end{Lem}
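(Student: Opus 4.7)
\medskip

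The plan is to compute $D\mathcal{F}(u)$ directly from the definition, viewing $\mathcal{F}$ as a composition of maps and applying the product and chain rules. Since $\mathcal{F}(u) = f(S(u)) \cdot u$ with $S(u) = u^{-\beta} L(u)$ from \eqref{eq:Constants}, the natural route is: first differentiate the map $u \mapsto S(u)$, then apply the product rule to the outer product $f(S(u)) \cdot u$.

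First, I would compute $DS(u)$. Since $L = S_0 - c_n \Delta_0$ is a linear differential operator, one has $DL(u)[h] = L(h)$. Using the product rule on $S(u) = u^{-\beta} L(u)$ together with the chain rule for the power $u^{-\beta}$ gives
\begin{equation*}
DS(u)[h] = -\beta u^{-\beta - 1} L(u) \, h + u^{-\beta} L(h) = -\beta u^{-1} S \, h + u^{-\beta} L(h).
\end{equation*}
Next, applying the product rule to $\mathcal{F}(u) = f(S) u$ together with the chain rule $D(f\circ S)(u)[h] = f'(S) \cdot DS(u)[h]$ yields
\begin{equation*}
D\mathcal{F}(u)[h] = f(S)\, h + f'(S) \cdot DS(u)[h] \cdot u.
\end{equation*}
Substituting the expression for $DS(u)[h]$ and simplifying the factor $u \cdot u^{-\beta} = u^{1-\beta}$ gives
\begin{equation*}
D\mathcal{F}(u)[h] = \bigl[f(S) + f'(S)\bigl(u^{1-\beta} L - \beta S\bigr)\bigr] h,
\end{equation*}
which is exactly \eqref{eq:Frechet}, read as a linear (differential) operator acting on $h$.

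This is a routine calculation with no real obstacle; the only thing to be slightly careful about is the bookkeeping of the two appearances of $u$ (one inside $S(u) = u^{-\beta}L(u)$ and one as the outer factor in $f(S)\cdot u$) so that the $u \cdot u^{-\beta} = u^{1-\beta}$ cancellation is applied correctly and the non-linear term $-\beta u^{-1} S$ from differentiating $u^{-\beta}$ is combined with the $u$ factor to produce the $-\beta S$ appearing in \eqref{eq:Frechet}. One might also remark that \eqref{eq:Frechet} makes the parabolicity structure of the linearization visible: the top-order part is $-c_n f'(S) u^{1-\beta} \Delta_0$, which is strongly elliptic precisely when $f'(S) < 0$, matching the discussion following the short-time existence theorem.
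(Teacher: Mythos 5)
Your computation is correct and is essentially the same as the paper's: the paper carries out the identical calculation by substituting $u+h$ and expanding to first order in $h$ (with explicit $\mathcal{O}(h^2)$ remainders), whereas you phrase it via the chain and product rules for Fréchet derivatives; the intermediate expression for $DS(u)[h]$ and the final formula agree exactly.
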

\begin{proof}
In order to linearize \eqref{eq:NonuEq} we replace $u$ by $u+h$ and expand everything in powers of $h$. For instance 
\begin{equation}\label{eq:Binomial}
    (u+h)^{-\beta}=u^{-\beta}-\beta u^{-(\beta+1)}h+\mathcal{O}(h^{2}).
\end{equation}
Also, from \eqref{eq:Binomial},  we compute
\begin{displaymath}
\begin{aligned}
    (u+h)^{-\beta}L(u+h)
    =u^{-\beta}L(u)+u^{-(\beta+1)}(uL(h)-\beta hL(u))+\mathcal{O}(h^{2}).
\end{aligned}
\end{displaymath}
Computing the Taylor expansion of $f$ around $u^{-\beta}L(u)$ gives us
\begin{displaymath}
\begin{aligned}
f\left((u+h)^{-\beta}L(u+h)\right)(u+h)
&=f\left(u^{-\beta}L(u)\right)(u+h)+\mathcal{O}(h^{2})\\
&+f'\left(u^{-\beta}L(u)\right)\left(u^{-(\beta+1)}(uL(h)-\beta hL(u))\right)u.
\end{aligned}
\end{displaymath}
From \eqref{eq:Constants} we conclude
\begin{displaymath}
\begin{aligned}
& f\left((u+h)^{-\beta}L(u+h)\right)(u+h)-f\left(u^{-\beta}L(u)\right)u\\
 &=f\left(S\right)h+f'\left(S\right)\left(u^{1-\beta}L(h)-\beta Sh\right)
 +\mathcal{O}(h^{2}),   
\end{aligned}
\end{displaymath}
and since $\mathcal{F}(u)=f(S)\cdot u$, the Fréchet derivative is
\begin{displaymath}
   D\mathcal{F}(u)h= f\left(S\right)h+f'\left(S\right)\left(u^{1-\beta}L(h)-\beta Sh\right).
\end{displaymath}
\end{proof}

\begin{Cor}
The flow \eqref{eq:NonuEq} is strictly parabolic as long as $f'(S)\leq -c<0$.
\label{Cor:Parabolic}
\end{Cor}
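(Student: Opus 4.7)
My plan is to read off the principal symbol of the linearization computed in Lemma \ref{lemmaFrechet} and verify that, under the hypothesis $f'(S)\leq -c<0$, it has the correct (negative) sign uniformly. There is no real obstacle here — the Fréchet derivative has already been written out explicitly, so all that remains is a symbol check.

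Concretely, I would first split the operator $D\mathcal{F}(u)$ into its principal and lower-order parts. Writing $L=S_0-c_n\Delta_0$ with $c_n=4(n-1)/(n-2)>0$, the formula
\[
D\mathcal{F}(u)h=f(S)\,h+f'(S)\bigl(u^{1-\beta}L(h)-\beta S h\bigr)
\]
has the second-order piece $-c_n f'(S)\, u^{1-\beta}\Delta_0 h$, all other terms being of order zero. Recalling that $\Delta_0$ denotes the geometer's Laplacian on $(M,g_0)$, its principal symbol is $\sigma_2(\Delta_0)(x,\xi)=-|\xi|^2_{g_0}$. Hence
\[
\sigma_2\bigl(D\mathcal{F}(u)\bigr)(x,\xi)=c_n\,f'(S)\,u^{1-\beta}\,|\xi|^2_{g_0}.
\]

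Strict parabolicity of $\partial_t h=D\mathcal{F}(u)h$ at $u$ means precisely that this symbol is bounded away from $0$ by a negative definite quadratic form in $\xi$. Since $u$ is smooth and strictly positive, on the relevant time interval there is a constant $M_u>0$ with $u^{1-\beta}\geq M_u^{-1}$. Under the hypothesis $f'(S)\leq -c<0$ one therefore obtains
\[
\sigma_2\bigl(D\mathcal{F}(u)\bigr)(x,\xi)\leq -c\,c_n M_u^{-1}|\xi|^2_{g_0},
\]
which is uniformly negative definite in $\xi$. This is the defining condition for strict (in fact, uniform) parabolicity, establishing the corollary. The zeroth-order terms $f(S)-\beta S f'(S)$ contribute only to the lower-order part and play no role for parabolicity. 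The same symbol computation then justifies applying Theorem \ref{taylor-theorem} to the non-normalised flow, and — together with the rescaling argument of Section \ref{sec:NonNormal} — to the normalised flow as well.
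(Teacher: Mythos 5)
Your proposal is correct and follows essentially the same route as the paper, which simply reads off $-c_nf'(S)u^{1-\beta}\Delta_0$ as the principal part of the Fr\'echet derivative from Lemma \ref{lemmaFrechet}; you merely make the symbol computation explicit. One tiny point: since $1-\beta=-\tfrac{4}{n-2}<0$, the lower bound $u^{1-\beta}\geq M_u^{-1}$ actually requires an \emph{upper} bound on $u$ rather than positivity, but this holds anyway by continuity on the compact space-time domain, so the argument stands.
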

\begin{proof}
A non-linear PDE is called strictly parabolic if its linearization is strictly parabolic. From \eqref{eq:Frechet}, the linearization has $-c_nf'(S)u^{1-\beta} \Delta_0$ as highest order differentiation.
\end{proof}

\subsection{The normalised flow}
At the start of this section, we explained how to get a normalized solution if the function $f$ is $\alpha-$homogeneous. We do not generally assume this, so we give an argument to incorporate the normalization.  The most direct strategy would be to redo the linearization argument with the additional term $-Au$ dragged along. The linearization can be deduced as follows. Write 
\[A(u)\coloneqq \frac{1}{\Vol_g} \int_M f(S)\, d\Vol_g,
\quad D\Vol_g (h)\coloneqq \frac{2n}{n-2}\int_M h u^{-1} d\Vol_g.\]
Let $\mathcal{N}(u)=A(u)u.$ Then
\begin{align*}
D\mathcal{N}(h)&= \frac{1}{\Vol_g}\left( -D\Vol_g(h)A(u)+\int_M f'(S)(u^{1-\beta}L(h)-\beta S h) 
\frac{}{} \right.  \\ &\left.  \frac{}{} +\frac{2n}{n-2}f(S) h u^{-1}\, d\Vol_g\right)u +A(u)h.
\end{align*}
This has the structure of
\[h\mapsto \int_M \mathcal{D}(h)\, d\Vol_g+A(u)h,\]
where $\mathcal{D}$ is an elliptic operator. The normalized flow therefore linearizes to
\[\partial_t h=\mathcal{P} h +\int_M \mathcal{D}(h)\, d\Vol_g\]
for the elliptic operator $\mathcal{P}=D\mathcal{F}(u)+A(u)$.

\section{Long-time existence and convergence: the negative case}
\label{Section:Negative}

The goal of this section is to show the long-time existence and convergence of the generalized Yamabe flow when the initial scalar curvature $S_0$ is negative.  We need the framework of Hölder spaces which we now recall.

\begin{Def}[Spaces]\label{def-spaces}
Let $0<\alpha< 1$ and $(M,g_{0})$ a compact, smooth manifold of dimension $n\geq 3$. We denote the Riemannian distance function by $d\colon M\times M\to \R$. As usual, we introduce the parabolic distance $d_P\colon ([0,T]\times M)^2 \to \R$ as 
\[d_P((t,p),(s,q))\coloneqq \sqrt{d(p,q)^2+\vert t-s\vert}.\]
We define the $\alpha$-th Hölder norm by 
\[\norm{u}_{C^{\alpha}([0,T]\times M)}\coloneqq \sup_{[0,T]\times M} \vert u\vert + \sup_{(t,p)\neq (s,q)} \frac{\vert u(t,p)-u(s,q)\vert}{d_P((t,p),(s,q))^{\alpha}}\]
The Hölder spaces $C^\alpha([0,T]\times M)$ and $C^{2,\alpha}\left([0,T]\times M\right)$ are then
\begin{displaymath}
\begin{aligned}
&C^\alpha([0,T]\times M)\coloneqq \left\{u\in C([0,T]\times M)\, \colon\, \norm{u}_{C^{\alpha}([0,T]\times M)}<\infty\right\}, \\
&C^{2,\alpha}\left([0,T]\times M\right)\coloneqq \left\{w\in C^{\alpha}\left([0,T]\times M\right)\, \colon\, \Delta_{0}w\in C^{\alpha}\left([0,T]\times M\right)\right\},
\end{aligned}
\end{displaymath}
where the latter space is equipped with the norm\footnote{This is equivalent to the usual definition of $C^{2,\alpha}$ by elliptic regularity, e.g. \cite[Theorem 8.22]{GT}.}
\[\norm{u}_{C^{2,\alpha}([0,T]\times M)}\coloneqq \norm{u}_{C^\alpha([0,T]\times M)}+\norm{\Delta_0 u}_{C^\alpha([0,T]\times M)}.\] 
The spaces $C^\alpha(M)$ and $C^{2,\alpha}(M)$ are defined in the same way with $T=0$.
We shall also need the parabolic Hölder space
\[C^{2+\alpha}([0,T]\times M)\coloneqq \left\{u\in C^{2,\alpha}([0,T]\times M)\, \colon\, \partial_t u \in C^{\alpha}([0,T]\times M)\right\}\]
with the norm
\[\norm{u}_{C^{2+\alpha}}\coloneqq \norm{u}_{C^{2,\alpha}([0,T]\times M)}+\norm{\partial_t u}_{C^\alpha([0,T]\times M)}.\]

Finally, we need the open set
\begin{equation}
\label{eq:BoundedAwayZero2}
\mathcal{O}\coloneqq \left\{w\in C^{2,\alpha}\left(M\right)\, \colon\, \exists\, c,\tilde{c}>0: w>c \ \ \mbox{and} \ \ f'(w^{-\beta}Lw)<-\tilde{c}\right\}.
\end{equation}
\end{Def}

\subsection{Uniform estimates of solutions}

Throughout this section, we keep using the notation: 
\begin{displaymath}
\begin{aligned}
S_{0,\min}\coloneqq \min_{p\in M}S_{0}(p)\\
S_{0,\max}\coloneqq \max_{p\in M}S_{0}(p). 
\end{aligned}
\end{displaymath}
We will also set 
\[u_0=1\]
in this section. Hence $S_0$ is the initial scalar curvature, $\tilde{S}_0=S_0$.
\begin{Prop}\label{Prop:EstimateGeneralizedScalar}
Assume $S_{0}$ is everywhere negative and let $f\in C^{2}(\mathbb{R})$ be  strictly decreasing on $[S_{0,\min},S_{0,\max}]$. Then there exist positive constants $B,C$ such that any solution of the generalized Yamabe flow in $C^{2+\alpha}([0,T)\times M)$ satisfies
\begin{displaymath}
\norm{f(S)-A}_{L^\infty(M)}\leq Ce^{-Bt}.
\end{displaymath}
The constants $B,C$ are independent of $T$.
\end{Prop}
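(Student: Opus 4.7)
The plan is to obtain exponential decay by running the maximum principle on $w := f(S)-A$ and exploiting the cancellation of the nonlocal drift $\partial_t A$ between its spatial maximum and minimum.

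First, I would confine $S$ to a compact interval of negativity. Since $M$ is compact and $S_0<0$ everywhere, one has $S_{0,\max}<0$. By Corollary~\ref{scalar-min-max}, $S_{\max}(t)$ is non-increasing and $S_{\min}(t)$ is non-decreasing as long as they remain non-positive, so $S(t,x)\in[S_{0,\min},S_{0,\max}]\subset(-\infty,0)$ for every $(t,x)\in[0,T)\times M$. Since $f\in C^2$ is strictly decreasing on this compact interval, the function $\alpha(s):=f'(s)\,s$ is continuous and strictly positive there, hence admits a positive lower bound
\[
B:=\min_{s\in[S_{0,\min},S_{0,\max}]} f'(s)\,s > 0.
\]

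Next I would derive an oscillation differential inequality. Writing $w=f(S)-A$, the evolution equation \eqref{eq:IntrinsicScalarCurvEvol1} together with the observation that $\Delta f(S)=\Delta w$ (as $A$ is spatially constant) gives
\[
\partial_t w = -(n-1)f'(S)\Delta w - \alpha(S)\,w - \partial_t A.
\]
Because $\int_M w\, d\Vol_g = 0$, the extrema satisfy $w_{\min}(t)\leq 0\leq w_{\max}(t)$. By Hamilton's trick the oscillation $W(t):=w_{\max}(t)-w_{\min}(t)$ is a.e.\ differentiable with $W'(t) = \partial_t w(t,x_t) - \partial_t w(t,y_t)$ at spatial max/min points $x_t,y_t$. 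At $x_t$ one has $\Delta w\leq 0$ and $f'(S)<0$, so the diffusion term $-(n-1)f'(S)\Delta w$ is non-positive; combined with $w_{\max}\geq 0$ and $\alpha(S)\geq B$ this yields $\partial_t w(t,x_t)\leq -B\,w_{\max}-\partial_t A$, and symmetrically $\partial_t w(t,y_t)\geq -B\,w_{\min}-\partial_t A$. Subtracting, the $\partial_t A$ terms cancel and one is left with $W'(t)\leq -B\,W(t)$, whence Gronwall gives $W(t)\leq W(0)e^{-Bt}$. The mean-zero property forces $\|w\|_{L^\infty(M)}\leq W(t)$, and so $\|f(S)-A\|_{L^\infty(M)}\leq Ce^{-Bt}$ with $C:=f(S_{0,\min})-f(S_{0,\max})$; the constants $B,C$ depend only on $f$ and $S_0$, not on $T$.

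The hard part is that the parabolic maximum principle cannot be applied to $w$ directly: the drift $\partial_t A$ is spatially constant but of indefinite sign, so it would spoil any attempt to bound $w_{\max}$ or $|w_{\min}|$ individually. The key observation, which makes the argument clean, is precisely that because $\partial_t A$ is spatially constant it drops out of the evolution of the \emph{oscillation}, leaving a pure linear decay inequality with the uniform rate $B$ inherited from the strict negativity of $f'(s)\,s$ on the compact range of $S$. The only non-trivial input needed beyond what is already in the paper is Hamilton's trick for the Dini derivative of $\max$ and $\min$, plus the bootstrap that $S$ indeed remains inside $[S_{0,\min},S_{0,\max}]$, which is supplied by Corollary~\ref{scalar-min-max}.
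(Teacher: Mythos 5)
Your argument is correct, and it reaches the same conclusion by a slightly different route. The paper runs the maximum principle on $S$ itself: it combines the extremal inequalities \eqref{max-ineq} with the sandwich $f(S_{\max})\leq A\leq f(S_{\min})$ to get $\partial_t\bigl(S_{\max}-S_{\min}\bigr)\leq -B\,(S_{\max}-S_{\min})$ (after an add-and-subtract manipulation and one application of the mean value theorem), and then converts the decay of $\operatorname{osc}(S)$ into decay of $f(S)-A$ by a second application of the mean value theorem. You instead run the maximum principle directly on $w=f(S)-A$, using the conjugated evolution equation (this is exactly \eqref{PDE-KrylovSafonov}, which the paper only uses later for the Krylov--Safonov step), together with the zero-mean property $\int_M w\,d\Vol_g=0$ to pass from the oscillation of $w$ to its sup norm. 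Your observation that the nonlocal term $\partial_t A$ cancels in the oscillation is the right structural reason the argument closes; the paper sidesteps $\partial_t A$ entirely by never differentiating $A$, using only the pointwise bounds $f(S_{\min})-A\geq 0\geq f(S_{\max})-A$. Both proofs rest on the same two inputs — Corollary \ref{scalar-min-max} confining $S$ to $[S_{0,\min},S_{0,\max}]\subset(-\infty,0)$, and the strict positivity of $f'(s)\,s$ on that interval — and your rate constant $B=\min f'(s)s$ is in fact at least as good as the paper's $B=-S_{0,\max}\cdot\min\lvert f'\rvert$. The only ingredient you use beyond the paper's toolkit is Hamilton's trick for differentiating $w_{\max}$ and $w_{\min}$, which is standard and is implicitly what the paper invokes when it writes $\partial_t S_{\max}$ and $\partial_t S_{\min}$.
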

\begin{proof} 
From Corollary \ref{scalar-min-max}, we know that $S\in [S_{0,\min},S_{0,\max}]$ for any time.  Applying the maximum and minimum principle to \eqref{eq:IntrinsicScalarCurvEvol2} tells us
\begin{displaymath}
\begin{aligned}
\partial_{t}S_{\max}&\leq -S_{\max}\left(f(S_{\max})-A)\right),\\
\partial_{t}S_{\min}&\geq-S_{\min}\left(f(S_{\min})-A)\right).
\end{aligned}
\end{displaymath}
As in the proof of Corollary \ref{scalar-min-max}, $f(S_{\max})-A\leq0$ and $f(S_{\min})-A\geq0$, thus
\begin{displaymath}
\begin{aligned}
\partial_{t}(S_{\min}-S_{\max})&\geq S_{\max}\left(f(S_{\max})-A)\right)-S_{\min}\left(f(S_{\min})-A)\right)\\
&\geq S_{0,\max}\left(f(S_{\max})-A)\right)-S_{\min}\left(f(S_{\min})-A)\right)\\
&=S_{0,\max}(f(S_{\max})-f(S_{\min}))+(S_{0,\max}-S_{\min})(f(S_{\min})-A)\\
&\geq S_{0,\max}(f(S_{\max})-f(S_{\min})).
\end{aligned}
\end{displaymath}
By the mean value theorem there exist $\xi_{t}\in(S_{\min},S_{\max})\subset[S_{0,\min},S_{0,\max}]$ such that $\displaystyle{f'(\xi_{t})=\frac{f(S_{\max})-f(S_{\min})}{S_{\max}-S_{\min}}}$. Thus,
\begin{displaymath}
\begin{aligned}
\partial_{t}(S_{\min}-S_{\max})\geq S_{0,\max}f'(\xi_{t})(S_{\max}-S_{\min}).
\end{aligned}
\end{displaymath}
Let \[-c=\max_{\xi\in[S_{0,\min},S_{0,\max}]} f'(\xi), \quad B\coloneqq -c S_{0,\max}.\]
Note that $B$ is positive since both $S_0$ and $f'$ are negative. So
\begin{displaymath}
    \partial_{t}(\ln{(S_{\max}-S_{\min})})\leq -B.
\end{displaymath}
Integrating this inequality from $0$ to $t$ and applying the exponential we conclude
\begin{equation}\label{eq:expBoundScalarDiff}
(S_{\max}-S_{\min})(t)\leq (S_{0,\max}-S_{0,\min})e^{-Bt}=c_{0}e^{-Bt}
\end{equation}
with $c_{0}\coloneqq S_{0,\max}-S_{0,\min}$. Since $f$ is decreasing,
\begin{displaymath}
\begin{aligned}
f(S_{\max})\leq &f(S)\leq f(S_{\min}),\\
-f(S_{\min})\leq &-A\leq -f(S_{\max}).
\end{aligned}
\end{displaymath}
So
\begin{displaymath}
-(f(S_{\min})-f(S_{\max}))\leq f(S)-A\leq f(S_{\min})-f(S_{\max}).
\end{displaymath}
From \eqref{eq:expBoundScalarDiff}, and the mean value theorem again, we conclude
\begin{displaymath}
\vert f(S)-A\vert\leq \vert f(S_{\max})-f(S_{\min}\vert=\vert f'(\xi_{t})\vert(S_{\max}-S_{\min})\leq c_{0}\vert f'(\xi_{t})\vert e^{-Bt}.
\end{displaymath}
Take $\displaystyle{C=\max_{\xi\in[S_{0,\min},S_{0,\max}]} \vert f'(\xi)\vert \cdot c_
{0}}$ and conclude 
\begin{displaymath}
\norm{f(S)-A}_{L^\infty(M)}\leq Ce^{-Bt}.
\end{displaymath}
\end{proof}
\begin{Prop}\label{BoundSolution} Assume $S_{0}$ is everywhere negative and let $f\in C^{2}(\mathbb{R})$ be strictly decreasing on $[S_{0,\min},S_{0,\max}]$.
Suppose that $u$ is a solution of the generalized Yamabe flow in $C^{2+\alpha}([0,T)\times M)$ with $u_0=1$. Then there exists a constant $c>0$, depending on $A(0)$,  $S_{0,\max}$,  $S_{0,\min}$ but not on $T$, such that $c^{-1}\leq u(p,t)\leq c$ for all $p\in M$ and $t\in[0,T)$. 
\end{Prop}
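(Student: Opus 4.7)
The key observation is that the previous proposition already gives exponential decay of $f(S)-A$ in $L^\infty$, and the evolution equation for the conformal factor $u$ is logarithmic in $u$. Concretely, from \eqref{eq:uEq} we have
\[
\partial_t \log u = \frac{n-2}{4}\bigl(f(S)-A\bigr).
\]
Since $u_0=1$ gives $\log u(0,p)=0$, integrating in time yields
\[
\log u(t,p) = \frac{n-2}{4}\int_0^t \bigl(f(S(\tau,p))-A(\tau)\bigr)\,d\tau.
\]

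The plan is then just to estimate the right-hand side using Proposition \ref{Prop:EstimateGeneralizedScalar}. By that result, there are constants $B,C>0$ independent of $T$ (depending only on $f$ and on $S_{0,\min}, S_{0,\max}$) such that $\|f(S)-A\|_{L^\infty(M)} \leq C e^{-B\tau}$. Hence, pointwise in $p$,
\[
|\log u(t,p)| \;\leq\; \frac{n-2}{4}\int_0^t C e^{-B\tau}\,d\tau \;\leq\; \frac{(n-2)C}{4B}.
\]
Setting $c \coloneqq \exp\!\bigl(\tfrac{(n-2)C}{4B}\bigr)$ gives $c^{-1} \leq u(t,p) \leq c$ for all $(t,p)\in [0,T)\times M$, with $c$ depending only on $n$, $f$, $S_{0,\min}$, $S_{0,\max}$ (and hence on $A(0)$ through these), and in particular independent of $T$.

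There is no genuine obstacle here: the whole point of the uniform exponential convergence of $f(S)-A$ in the previous proposition is precisely that it makes the integrated bound on $\log u$ finite uniformly in $T$. The only thing to be careful about is that the constants $B,C$ from Proposition \ref{Prop:EstimateGeneralizedScalar} are independent of $T$, which follows from their derivation through the maximum principle applied on a fixed interval $[S_{0,\min}, S_{0,\max}]$ to which $S$ is confined by Corollary \ref{scalar-min-max}.
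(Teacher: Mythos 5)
Your proof is correct and is essentially identical to the paper's: both integrate $\partial_t \log u = \frac{n-2}{4}(f(S)-A)$ against the uniform exponential bound from Proposition \ref{Prop:EstimateGeneralizedScalar}. The only cosmetic difference is that the paper adds a short continuity/contradiction argument to confirm that $u$ stays positive throughout $[0,T)$ (so that $\log u$ is everywhere defined), a point you implicitly assume but which is harmless to make explicit.
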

\begin{proof}
Since $u_0=1$ is positive, we know that $u$ remains positive for all $t$ small enough.
From the generalized Yamabe flow, we find
\begin{displaymath}
    \partial_{t}\ln{u}=\frac{n-2}{4}(f(S)-A).
\end{displaymath}
Hence, from Proposition \ref{Prop:EstimateGeneralizedScalar},
\begin{displaymath}
|\partial_{t}\ln{u}|\leq\frac{n-2}{4}Ce^{-Bt}
\end{displaymath}
i.e,
\begin{displaymath}
-\frac{n-2}{4}Ce^{-Bt}\leq\partial_{t}\ln{u}\leq\frac{n-2}{4}Ce^{-Bt}.
\end{displaymath}
Integrating over $[0,t]$ gives us
\begin{displaymath}
-\frac{(n-2)C}{4B}\left(1-e^{-Bt}\right)\leq\ln{u(t)}\leq\frac{(n-2)C}{4B}\left(1-e^{-Bt}\right).
\end{displaymath}
Therefore,
\begin{displaymath}
    \left|\ln{u}\right|\leq\frac{(n-2)C}{4B},
\end{displaymath}
i.e.
\begin{equation}\label{eq:BoundfunctionU}
    e^{-\frac{(n-2)C}{4B}}\leq u(t)\leq e^{\frac{(n-2)C}{4B}}.
\end{equation}
If $u$ were to not remain positive during the flow, there would $\tilde{T}<T$ such that $u(t)>0$ for all $t\in[0,\tilde{T})$ and $u_{\min}(\tilde{T})=0$. Proceeding as before, we obtain the inequality \eqref{eq:BoundfunctionU} for $u(\tilde{T})$. Hence $u_{\min}(\tilde{T})>0$, a contradiction. So $u$ remains positive during the flow.
\end{proof}

\begin{Cor}
Assume the setup of Proposition \ref{BoundSolution}. Then there exists a constant $\tilde{c}>0$, depending on  $A(0)$,   $S_{0,\min}$, $S_{0,\max}$ but independent of $T$, such that $\norm{\partial_{t}u}_{L^{\infty}(M)}\leq \tilde{c}e^{-Bt}$. 
\end{Cor}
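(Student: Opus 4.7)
The plan is a short combination of the two preceding results. From the flow equation \eqref{eq:uEq}, we have pointwise
\[
\partial_t u = \frac{n-2}{4}\bigl(f(S)-A\bigr)\, u,
\]
so that $|\partial_t u| \leq \tfrac{n-2}{4}\|f(S)-A\|_{L^\infty(M)}\cdot \|u\|_{L^\infty(M)}$. The first factor on the right decays like $Ce^{-Bt}$ by Proposition \ref{Prop:EstimateGeneralizedScalar}, and the second factor is uniformly bounded in $t$ by $c$, independent of $T$, thanks to Proposition \ref{BoundSolution}.

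Multiplying these two bounds immediately gives
\[
\|\partial_t u\|_{L^\infty(M)} \leq \frac{n-2}{4}\, C c\, e^{-Bt} \eqqcolon \tilde c\, e^{-Bt},
\]
with $\tilde c$ depending only on $A(0)$, $S_{0,\min}$, $S_{0,\max}$ through $B,C,c$ but not on $T$. There is no real obstacle here — this is a direct corollary rather than an independent argument, and the only thing to check is that the constants $B$, $C$, and $c$ produced in Propositions \ref{Prop:EstimateGeneralizedScalar} and \ref{BoundSolution} are indeed $T$-independent, which they are by construction (they depend only on $A(0)$ and the extremes of $S_0$).
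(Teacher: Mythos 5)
Your argument is correct and coincides with the paper's own proof: both simply combine the pointwise flow equation $\partial_t u = \tfrac{n-2}{4}(f(S)-A)u$ with the exponential decay of $\norm{f(S)-A}_{L^\infty(M)}$ from Proposition \ref{Prop:EstimateGeneralizedScalar} and the $T$-independent bound on $u$ from Proposition \ref{BoundSolution}. Nothing further is needed.
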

\begin{proof}
The generalized Yamabe flow equation is 
\begin{displaymath}
\partial_{t}u=\frac{n-2}{4}(f(S)-A)\cdot u,
\end{displaymath}
and we have derived uniform bounds on both factors on the right hand side.
\end{proof}

\subsection{Long-time existence}
We are ready to complete the long-time existence argument.
\begin{Thm}[Long-time existence]\label{Thm:NegLongTime}
Assume the setup of Proposition \ref{BoundSolution}. Then the generalized Yamabe flow exists for all time.
\end{Thm}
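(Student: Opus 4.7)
The plan is to invoke the continuation criterion built into the short-time existence theorem (Theorem \ref{taylor-theorem}): the solution persists as long as (i) $f$ remains strictly decreasing on $[S_{\min}(t),S_{\max}(t)]$, and (ii) $\|u(t)\|_{C^{2,\alpha}(M)}$ stays bounded. Both will be verified for every finite time, forcing the maximal existence time $T^*$ to be $+\infty$.

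Condition (i) is immediate from Corollary \ref{scalar-min-max}: since $\tilde{S}_0=S_0$ is negative everywhere, the scalar curvature satisfies $S(t)\in[S_{0,\min},S_{0,\max}]$ for all $t\in[0,T^*)$, and by hypothesis $f$ is strictly decreasing on this compact interval, with $f'\leq -c<0$ for some $c>0$. For (ii), we bootstrap from the a priori estimates already in hand. Proposition \ref{BoundSolution} yields $c^{-1}\leq u\leq c$ uniformly in $t$, and combining this with the bound on $S$ via the conformal Laplacian identity
\begin{equation*}
c_n\Delta_0 u = S_0 u - u^\beta S
\end{equation*}
produces a uniform $L^\infty(M)$ bound on $\Delta_0 u$. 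Standard elliptic $W^{2,p}$ theory then bounds $u$ uniformly in $W^{2,p}(M)$ for every $p<\infty$, hence by Morrey embedding in $C^{1,\gamma}(M)$ for every $\gamma\in(0,1)$.

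To close the bootstrap at the $C^{2,\alpha}$ level, we exploit the uniformly parabolic structure of the flow. The linearization computed in Lemma \ref{lemmaFrechet} has principal symbol $-c_n f'(S)\,u^{1-\beta}\Delta_0$, which is uniformly elliptic thanks to $f'\leq -c<0$ and $c^{-1}\leq u\leq c$. Applying parabolic Krylov-Safonov (or equivalently, applying Krylov--Nash--Moser to the evolution equation \eqref{eq:IntrinsicScalarCurvEvol2} for $S$, which is uniformly parabolic with bounded coefficients once $u$ and $|\nabla u|$ are controlled, and then using elliptic Schauder on the conformal Laplacian identity) promotes $u$ to $C^{2,\alpha}(M)$ with a bound uniform on any compact time interval; the nonlocal normalization $A(t)$ depends only on $t$ and therefore does not interfere with the regularity theory. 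This contradicts finite-time blow-up of $\|u\|_{C^{2,\alpha}}$, so $T^*=\infty$.

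The main obstacle is the final upgrade from $L^\infty$/$C^{1,\gamma}$ bounds to $C^{2,\alpha}$ bounds, since the underlying PDE is fully nonlinear in the second derivatives of $u$. This is where the strict parabolicity established in Corollary \ref{Cor:Parabolic}, in conjunction with the scalar-curvature confinement from Corollary \ref{scalar-min-max}, is essential: it ensures the ellipticity constants are uniform in $t$ and allows the standard nonlinear parabolic regularity machinery to apply. Once the $C^{2,\alpha}$ estimate is in hand, long-time existence follows immediately from the continuation criterion.
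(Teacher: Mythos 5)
Your proposal is correct in outline and follows essentially the same route as the paper: uniform bounds on $u$, $u^{-1}$ and $S$ (Proposition \ref{BoundSolution} and Corollary \ref{scalar-min-max}), then $W^{2,p}$/Morrey to get uniform $C^{1,\gamma}$ control of $u$, then Krylov--Safonov on the scalar-curvature evolution to get a uniform $C^\alpha$ bound on $S$, and finally the identity $-c_n\Delta_0 u = u^\beta S - S_0 u$ to close at $C^{2,\alpha}$. Whether one then quotes the continuation criterion of Theorem \ref{taylor-theorem} (as you do) or restarts the flow from the limit $u(T)\in\mathcal{O}$ (as the paper does) is immaterial.

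One step needs repair. You propose applying Krylov--Safonov (or Nash--Moser) to \eqref{eq:IntrinsicScalarCurvEvol2} and assert it is uniformly parabolic with bounded coefficients once $u$ and $|\nabla u|$ are controlled. That equation contains the term $f''(S)\,|\nabla S|^2_g$, and $|\nabla S|$ is \emph{not} controlled at this stage, so the coefficients/inhomogeneity are not a priori bounded and the theorem does not apply as stated. The paper's Step 2 circumvents this by multiplying \eqref{eq:IntrinsicScalarCurvEvol1} by $f'(S)$ and viewing the result as a parabolic equation for $w=f(S)$, namely $(\partial_t+(n-1)f'(S)\Delta)w=-Sf'(S)(f(S)-A)$, in which the chain rule has absorbed the quadratic gradient term and the right-hand side is uniformly bounded. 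Krylov--Safonov then gives $f(S)\in C^\alpha$ uniformly, and the mean value theorem (using $f'\leq -c<0$) transfers this to $S$. With that substitution your argument goes through verbatim.
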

\begin{proof}
The idea is to show that if the maximal existence time $T$ were finite, the limit $\lim_{t\to T} u(t)$ exists and is in $\mathcal{O}$ (defined in \eqref{eq:BoundedAwayZero2}). So one can restart the flow, contradicting the maximality of $T$. We already have uniform $L^{\infty}$ bounds on $u$ and $u^{-1}$ by Proposition \ref{BoundSolution}. We also know by Corollary \ref{scalar-min-max} that $S\in [S_{0,\min},S_{0,\max}]$, so $f'(S)\leq -c$ uniformly in time. What remains to show are uniform $C^{2,\alpha}(M)$-bounds on $u$. We will show these in three steps.
\bigskip

 \noindent\textbf{Step 1 -- $u\in C^{1,\alpha}(M)$ uniformly:}
\medskip

\noindent By \eqref{eq:ConfScalarCurv}, we may write
\begin{equation}
-c_n\Delta_0 u=u^\beta S-S_0 u.
\label{eq:Delta_0u}
\end{equation}
The right hand side is uniformly bounded. So 
\[u\in W^{2,p}(M)\coloneqq \{w\in L^p(M)\, \colon\, \Delta_0 w\in L^p(M)\}\]
uniformly for any $p\geq 1$. By the Sobolev embedding theorem (more precisely, Morrey's inequality), $u\in C^{1,\alpha}(M)$ uniformly for any $\alpha\in (0,1)$. \bigskip

\noindent\textbf{Step 2 -- $S\in C^\alpha(M)$ uniformly:}
\medskip

\noindent Multiply the evolution equation for $S$, \eqref{eq:IntrinsicScalarCurvEvol1}, by $f'(S)$ to deduce
\begin{equation}\label{PDE-KrylovSafonov}
(\partial_t +(n-1)f'(S)\Delta)f(S)=-Sf'(S)(f(S)-A).
\end{equation}
Since $f'(S)\leq -c$ uniformly, the right hand side is uniformly bounded. From \eqref{LaplacianRelation}
\begin{displaymath}
f'(S)\Delta \xi=u^{-\frac{4}{n-2}}f'(S)\left(\Delta_0 \xi +\frac{2}{u}\ip{\nabla u}{\nabla \xi}\right)
\end{displaymath}
thus the operator $(n-1)f'(S)\Delta$ is uniformly elliptic, since by \textbf{Step 1} $u\in C^{1,\alpha}(M)$ uniformly and  $f'(S)\leq -c$ uniformly.  So the PDE \eqref{PDE-KrylovSafonov} is  parabolic with uniformly bounded coefficients. By Krylov-Safanov estimates, \cite[Theorem 4.3]{KS}, and the compactness of $M$, we deduce that $f(S)$ is uniformly in  $C^\alpha(M)$. By the mean value theorem there exist $\xi_{t}\in(S(t,p),S(t,q))\subset[S_{0,\min},S_{0,\max}]$ such that $\displaystyle{f'(\xi_{t})(S(t,p)-S(t,q))=f(S(t,p))-f(S(t,q))}$. Thus,
\begin{displaymath}
|(S(t,p)-S(t,q))|\leq \frac{1}{c}|f(S(t,p))-f(S(t,q))|,    
\end{displaymath}
and
\begin{displaymath}
\begin{aligned}
||S(t)||_{C^{\alpha}(M)}&=||S(t)||_{L^{\infty}(M)}+\sup_{p\neq q}\frac{|S(t,p)-S(t,q)|}{d(p,q)^{\alpha}}\\
&\leq||S(t)||_{L^{\infty}(M)}+\frac{1}{c}\sup_{p\neq q}\frac{|f(S(t,p))-f(S(t,q))|}{d(p,q)^{\alpha}}<\infty,
\end{aligned}
\end{displaymath}
so $S$ is also uniformly $C^\alpha$.   \bigskip

\noindent\textbf{Step 3 -- $\Delta_0 u\in C^\alpha(M)$ uniformly:}
\medskip

\noindent Returning to \eqref{eq:Delta_0u}, we conclude that $\Delta_0 u$ is uniformly $C^\alpha$.\bigskip

Let $0<\alpha'<\alpha$. Then $C^\alpha(M)\subset C^{\alpha'}(M)$ compactly, so there is a subsequence $t_k\to T$ such that $u(t_k)$ converges. Call this limit $u(T)$. By the uniform bounds, $u$ is continuos up to $T$, hence does not depend on the chosen subsequence.  The limit is in $\mathcal{O}$ by the above considerations. 
\end{proof}

\subsection{Convergence of the generalized Yamabe flow}

\begin{Thm}[The scalar negative case]
Assume that $g_{0}$ is a metric with scalar curvature $S_{0}$ everywhere negative and suppose that $u(t)$ is the solution to generalized Yamabe flow \eqref{eq:uEq} (with $u_0=1$) that exists for all time. Then the associated metric $g(t)=u(t)^{\frac{4}{n-2}}g_{0}$ converges to a metric with constant negative scalar curvature.
\end{Thm}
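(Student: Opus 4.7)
The strategy is to promote the exponential decay estimates of Section \ref{Section:Negative} to smooth convergence of the conformal factor $u(t)$, and then identify the limiting scalar curvature as a negative constant. All the ingredients are already available: Proposition \ref{BoundSolution} gives $c^{-1}\leq u \leq c$ uniformly in $t$, its corollary gives $\norm{\partial_t u(t)}_{L^\infty(M)} \leq \tilde{c} e^{-Bt}$, Proposition \ref{Prop:EstimateGeneralizedScalar} gives $\norm{f(S)-A}_{L^\infty(M)} \leq Ce^{-Bt}$, and Steps 1--3 in the proof of Theorem \ref{Thm:NegLongTime} provide a time-uniform bound for $u(t)$ in $C^{2,\alpha}(M)$.

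First, the integrability of $\norm{\partial_t u(t)}_{L^\infty(M)}$ combined with the fundamental theorem of calculus gives, for every $0\leq s\leq t<\infty$,
\[
\norm{u(t)-u(s)}_{L^\infty(M)} \leq \int_s^t \tilde{c}\, e^{-B\tau}\, d\tau \leq \frac{\tilde{c}}{B}\,e^{-Bs},
\]
so $\{u(t)\}$ is Cauchy in $C^0(M)$ as $t\to\infty$ and converges uniformly to some $u_\infty\in C^0(M)$ with $u_\infty \geq c^{-1}>0$. Combining this with the uniform $C^{2,\alpha}(M)$ bound, Arzel\`a--Ascoli, and the compact embedding $C^{2,\alpha}(M)\hookrightarrow C^{2,\alpha'}(M)$ for any $\alpha'<\alpha$, every sequence $t_k\to\infty$ admits a subsequence along which $u(t_k)$ converges in $C^{2,\alpha'}(M)$; any such limit must equal the $C^0$ limit $u_\infty$, so in fact $u(t)\to u_\infty$ in $C^{2,\alpha'}(M)$ as $t\to\infty$.

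This convergence is strong enough to pass to the limit in every quantity appearing in the flow. By \eqref{eq:ConfScalarCurv}, $S(t)\to S_\infty\coloneqq u_\infty^{-(n+2)/(n-2)} L(u_\infty)$ in $C^{\alpha'}(M)$, and consequently $A(t)\to A_\infty\coloneqq \Vol_{g_\infty}^{-1}\int_M f(S_\infty)\,d\Vol_{g_\infty}$ with $g_\infty = u_\infty^{4/(n-2)}g_0$. Passing to the limit in $\norm{f(S)-A}_{L^\infty(M)}\leq Ce^{-Bt}\to 0$ yields $f(S_\infty)\equiv A_\infty$ pointwise on $M$. Strict monotonicity of $f$ on $[S_{0,\min},S_{0,\max}]$ makes $f$ injective there, so $S_\infty \equiv f^{-1}(A_\infty)$ is a constant. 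By Corollary \ref{scalar-min-max} this constant lies in $[S_{0,\min}, S_{0,\max}]\subset (-\infty,0)$, so $g_\infty$ is a metric with constant negative scalar curvature, as required.

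The main potential obstacle is the upgrade from the effortless $L^\infty$-convergence of $u$ to $C^2$-type convergence, since making sense of $S_\infty$ as a pointwise limit requires second-order control of $u_\infty$; this obstacle is exactly what the Krylov--Safonov step in the proof of Theorem \ref{Thm:NegLongTime} overcomes by producing time-uniform spatial $C^\alpha$-bounds on $S$ and hence on $\Delta_0 u$. A secondary but routine matter is verifying that the uniform $C^{2,\alpha}(M)$ estimate derived there is indeed independent of the time horizon; inspection of Steps 1--3 confirms this, as each step uses only bounds (on $u$, $u^{-1}$, $S$, and the ellipticity of $f'(S)\Delta$) that were established on the whole half-line $[0,\infty)$.
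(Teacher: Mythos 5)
Your proposal is correct and follows essentially the same route as the paper: exponential decay of $f(S)-A$ forces $u(t)$ to converge, the time-uniform $C^{2,\alpha}(M)$ estimates from the long-time existence proof upgrade this to a limit with well-defined scalar curvature, and injectivity of $f$ together with Corollary \ref{scalar-min-max} identifies the limit as a constant negative scalar curvature metric. Your write-up merely spells out the Arzel\`a--Ascoli/subsequence step that the paper compresses into ``proceeding as in the proof of Theorem \ref{Thm:NegLongTime}''.
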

\begin{proof}
The associated metric $g(t)=u(t)^{\frac{4}{n-2}}g_{0}$ solves the flow
\begin{displaymath}
    \partial_{t}g=(f(S)-A)g.
\end{displaymath}
Since the flow exists for all time, Proposition \ref{Prop:EstimateGeneralizedScalar} says
\begin{displaymath}
    f(S)-A\rightarrow0
\end{displaymath}
at an exponential rate. Since $u(t)$ is uniformly bounded, also $\partial_{t}g\rightarrow0$ exponentially. Thus $g$ converges to a continuous limit metric $g^{*}=(u^{*})^{\frac{4}{n-2}}g_{0}$, and the conformal factor $u(t)$ admits a continuous pointwise limit $u^{*}$ as $t\rightarrow\infty$. Proceeding as in the proof of Theorem \ref{Thm:NegLongTime} we conclude $u^{*}\in C^{2,\alpha}(M)$ and then the limit metric $g^{*}$ admits a well defined scalar curvature. Since $f(S)-A$ vanishes in the limit $t\rightarrow\infty$, we conclude that the scalar curvature of the limit metric must be constant, since $f$ is invertible with a $C^1$ inverse on the interval $[S_{0,\min},S_{0,\max}]$. The limiting sectional curvature is negative by Corollary \ref{scalar-min-max}.
\end{proof}

\section{Long-time existence: the flat case}
\label{Section:Flat}

The goal of this section is to show the long-time existence of the generalized Yamabe flow when the initial metric $g_0$ is scalar flat. To get a non-trivial flow, we therefore need to allow some non-trivial conformal factor, $u_0\neq 1$. \medskip

\noindent \textbf{Important notation:}
Recall the notation $\tilde{S}_0$ for the scalar curvature of $u_0^{\frac{4}{n-2}} g_0$, which of course does not have to vanish. We write 
\[\tilde{S}_{0,\min}\coloneqq \min\limits_{p\in M} \tilde{S}_0(p), \quad \tilde{S}_{0,\max}\coloneqq\max\limits_{p\in M} \tilde{S}_0(p).\]
We will keep using the abbreviations $\beta=\frac{n+2}{n-2}$ and $c_n=\frac{4(n-1)}{n-2}$.

\subsection{Uniform estimates of solutions}

 \begin{Lem}\label{LowerBoundScalar}
Assume $(M,g_0)$ is scalar flat and suppose that $u$ is a solution of the generalized Yamabe flow in $C^{2+\alpha}([0,T)\times M)$. Then $0 \geq  S_{\min}\geq \tilde{S}_{0,\min}$.
\end{Lem}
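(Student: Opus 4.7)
The plan is to prove the two inequalities $S_{\min}\leq 0$ and $S_{\min}\geq \tilde{S}_{0,\min}$ by orthogonal methods: the first is a purely pointwise observation via the conformal scalar curvature formula, while the second is parabolic and reduces immediately to Corollary~\ref{scalar-min-max}.

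\emph{Step 1: $S_{\min}(t)\leq 0$ for every $t\in[0,T)$.} Because $S_{0}\equiv 0$ by assumption, the conformal formula \eqref{eq:ConfScalarCurv} collapses to
\[
S(t,\cdot)\;=\;-\,c_{n}\,u(t,\cdot)^{-\beta}\,\Delta_{0}u(t,\cdot).
\]
For each fixed $t$, the positive function $u(t,\cdot)$ attains its spatial minimum at some point $p_{t}\in M$ by compactness. At $p_{t}$ the Hessian of $u(t,\cdot)$ with respect to $g_{0}$ is positive semi-definite, hence $\Delta_{0}u(t,p_{t})\geq 0$. Combined with $u>0$, this gives $S(t,p_{t})\leq 0$, and therefore
\[
S_{\min}(t)\;\leq\;S(t,p_{t})\;\leq\;0.
\]

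\emph{Step 2: $S_{\min}(t)\geq \tilde{S}_{0,\min}$.} Once Step~1 is in place, the hypothesis of the second bullet in Corollary~\ref{scalar-min-max} is satisfied at every $t\in[0,T)$, so $\partial_{t}S_{\min}\geq 0$ throughout the flow. Hence $S_{\min}$ is non-decreasing in $t$, and
\[
S_{\min}(t)\;\geq\;S_{\min}(0)\;=\;\tilde{S}_{0,\min}.
\]

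I do not expect a real obstacle in this proof; the one point requiring care is that Step~1 uses $S_{0}\equiv 0$ in an essential way, since otherwise the sign of $S$ at $p_{t}$ would pick up a contribution $S_{0}(p_{t})\,u(t,p_{t})$ of unknown sign and the argument would break. The standard technical caveat that $S_{\min}(t)$ is only Lipschitz and not classically differentiable in $t$ is already handled in the proof of Corollary~\ref{scalar-min-max} via Hamilton's ODE-comparison formulation of the maximum principle, so no extra work is required in Step~2.
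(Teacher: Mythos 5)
Your proof is correct, and its overall skeleton is the one the paper uses: first show $S_{\min}(t)\leq 0$, then feed this into the second bullet of Corollary~\ref{scalar-min-max} to get $\partial_t S_{\min}\geq 0$ and hence $S_{\min}(t)\geq \tilde{S}_{0,\min}$. The only point where you diverge is the mechanism for Step~1. You evaluate $S=-c_n u^{-\beta}\Delta_0 u$ at a spatial minimum point of $u$, where $\Delta_0 u\geq 0$ (with the paper's sign convention for $\Delta_0$ this is consistent with the computation in Proposition~\ref{BoundSolutionFlat}, which uses $\Delta_0 u_{\max}\leq 0$ at the maximum). The paper instead multiplies by $u^{\beta}$ and integrates, obtaining $\int_M u^{\beta}S\,d\Vol_{g_0}=-c_n\int_M \Delta_0 u\,d\Vol_{g_0}=0$, so that $S$ either vanishes identically or takes both signs. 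Both arguments are elementary and yield $S_{\min}\leq 0$ at every time; the integral version gives the marginally stronger dichotomy (which the lemma does not need), while your pointwise version is arguably more direct and localizes where the non-positivity occurs. Your closing caveat about the Lipschitz-in-$t$ regularity of $S_{\min}$ is appropriately deferred to the proof of Corollary~\ref{scalar-min-max}, exactly as the paper does.
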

\begin{proof}
We first argue that $S$ is either $0$ or has both signs. The scalar curvature is given by
\[S=u^{-\beta} L(u)=-c_n u^{-\beta} \Delta_0 u,\]
since $g_0$ is scalar flat. Multiplying both sides by $u^{\beta}$ and integrating tells us
\[\int_M u^\beta S\, d\Vol_{g_0}=-c_n \int_M \Delta_0 u\, d\Vol_{g_0}=0.\]
Since $u>0$, either $S=0$ everywhere or $S$ is both positive and negative. $S=0$ is trivial, so we assume $S_{\min}<0$. 
The claim then follows from Corollary \ref{scalar-min-max}.
\end{proof}

\begin{Prop}\label{BoundSolutionFlat} Assume $(M,g_0)$ is scalar flat and let $f\in C^{2}(\mathbb{R})$ be strictly decreasing.
Suppose that $u$ is a solution of the generalized Yamabe flow in $C^{2+\alpha}([0,T)\times M)$. Then there exists a constant $c>0$, depending on $u_0$ and the scalar flat metric $g_{0}$, but not on $T$, such that $c^{-1}\leq u(p,t)\leq c$ for all $p\in M$ and $t\in[0,T)$. 
\end{Prop}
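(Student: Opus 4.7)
The plan is to combine two ingredients: (i) a uniform bound on the spatial oscillation ratio $u_{\max}(t)/u_{\min}(t)$, obtained via the maximum principle applied to $u$ at its extrema, and (ii) volume preservation, which in the conformal picture reads as the time-independent identity
\[
\int_M u(t)^{\frac{2n}{n-2}}\, d\Vol_{g_0}\ =\ \int_M u_0^{\frac{2n}{n-2}}\, d\Vol_{g_0}\ =:\ V_0.
\]

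For (i) I would exploit the scalar flat assumption: with $S_0=0$, the conformal scalar curvature formula \eqref{eq:ConfScalarCurv} collapses to $S=-c_n u^{-\beta}\Delta_0 u$. At a point $p_t$ where $u(t,\cdot)$ attains its spatial maximum, $\Delta_0 u(t,p_t)\leq 0$, hence $S(t,p_t)\geq 0$; since $f$ is strictly decreasing, $f(S(t,p_t))\leq f(0)$. Inserting this into \eqref{eq:uEq} and using the Hamilton-type maximum principle for the Lipschitz envelope $u_{\max}(t)\coloneqq \max_{p\in M} u(t,p)$ gives
\[
\frac{d}{dt}\ln u_{\max}(t)\ \leq\ \tfrac{n-2}{4}\bigl(f(0)-A(t)\bigr).
\]
Symmetrically, at a spatial minimum $\Delta_0 u\geq 0$, so $S\leq 0$ and $f(S)\geq f(0)$, and the same principle applied to $u_{\min}(t)$ yields
\[
\frac{d}{dt}\ln u_{\min}(t)\ \geq\ \tfrac{n-2}{4}\bigl(f(0)-A(t)\bigr).
\]
Subtracting, the unknown time-dependent quantity $f(0)-A(t)$ cancels, $\ln(u_{\max}/u_{\min})$ is non-increasing in $t$, and integration gives
\[
\frac{u_{\max}(t)}{u_{\min}(t)}\ \leq\ R\ \coloneqq\ \frac{\max_M u_0}{\min_M u_0}\qquad\text{for every }t\in[0,T).
\]

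Combining (i) and (ii) then finishes the argument by a purely algebraic step. From $u_{\max}/R\leq u_{\min}\leq u\leq u_{\max}$ and $\Vol_{g_0}(M)=1$ one deduces
\[
R^{-\frac{2n}{n-2}} u_{\max}^{\frac{2n}{n-2}}\ \leq\ V_0\ \leq\ R^{\frac{2n}{n-2}} u_{\min}^{\frac{2n}{n-2}},
\]
which solves to the $T$-independent bounds $V_0^{(n-2)/(2n)}/R\leq u_{\min}(t)\leq u_{\max}(t)\leq R\, V_0^{(n-2)/(2n)}$, depending on $u_0$ and $g_0$ only through $R$ and $V_0$. The only genuine technical point I expect is the Hamilton-type upgrade from the pointwise differential inequalities at the extrema to a comparison principle for the Lipschitz envelopes $u_{\max}$, $u_{\min}$; once this standard Dini-derivative argument is in place, the scalar flat hypothesis does the real work by fixing the sign of $S$ at every spatial extremum of $u$, producing the cancellation that bounds the oscillation ratio.
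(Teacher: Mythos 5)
Your proposal is correct and follows essentially the same route as the paper: the sign of $S=-c_n u^{-\beta}\Delta_0 u$ at spatial extrema of $u$ plus monotonicity of $f$ gives $\partial_t\ln u_{\max}\leq \partial_t\ln u_{\min}$ (the paper compares $f$ at the two extremal values directly rather than via $f(0)$, a cosmetic difference), and the resulting Harnack ratio bound is combined with volume preservation exactly as you describe.
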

\begin{proof}
This is similar to the proof in \cite{Ye}.
As above, 
\[S=u^{-\beta} L(u)=-c_n u^{-\beta}\Delta_0 u.\] Let $u_{\max}$ and $u_{\min}$ denote the maximum and minimum of $u$ at a fixed time $t$.
Then
 \begin{displaymath}
u_{\max}^{-\beta}L(u_{\max})=-u^{-\beta}_{\max}c_{n}\Delta_{0}u_{\max}\geq 0
\end{displaymath}
and
\begin{displaymath}
u_{\min}^{-\beta}L(u_{\min})=-u^{-\beta}_{\min}c_{n}\Delta_{0}u_{\min}\leq 0.
\end{displaymath}
Since $f$ is strictly decreasing,
 \begin{displaymath}
 f(u_{\max}^{-\beta}L(u_{\max}))-A\leq  f(u_{\min}^{-\beta}L(u_{\min}))-A
 \end{displaymath}
 Set $N=\frac{4}{n-2}$. Inserting these bounds into the generalized Yamabe flow then tells us
\begin{displaymath}
\begin{aligned}
\frac{1}{N}\partial_{t}\ln{u_{\max}}
=f(u_{\max}^{-\beta}L(u_{\max}))-A\leq f(u_{\min}^{-\beta}L(u_{\min}))-A=\frac{1}{N}\partial_{t}\ln{u_{\min}}.
\end{aligned}
 \end{displaymath}
Integrating this over $[0,t]$ yields
\begin{displaymath}
\frac{u_{\min}(0)}{u_{\max}(0)}u_{\max}\leq u_{\min}.
\end{displaymath}
So we have proven that there exists a constant $0<k<1$ such that the Harnack inequality
\begin{equation}\label{eq789}
    u_{\min}(t)\geq k^{\frac{n-2}{2n}} u_{\max}(t)
\end{equation}
holds. As
\begin{displaymath}
    Vol_{g}=\int_{M}u^{\frac{2n}{n-2}}dVol_{g_{0}}
\end{displaymath}
and the volume is constant along the flow, the Harnack inequality \eqref{eq789} yields,
\begin{equation}\label{eq790}
 u_{\min}^{\frac{2n}{n-2}}(t)\geq\frac{k}{Vol_{g_{0}}}\int_{M}u_{\max}^{\frac{2n}{n-2}}(t)dVol_{g_{0}}\geq\frac{k}{Vol_{g_{0}}}\int_{M}u^{\frac{2n}{n-2}}(t)dVol_{g_{0}}=k.
\end{equation}
Thus $u_{\min}$ is uniformly bounded away from zero. Similarly, $u_{\max}^{\frac{2n}{n-2}}(t) \leq \frac{1}{k}$,
and then $u_{\max}$ is uniformly bounded from above.\\
If $u$ were to not remain positive during the flow, there would $\overline{T}<T$ such that $u(t)>0$ for all $t\in[0,\overline{T})$ and $u_{\min}(\overline{T})=0$. Proceeding as before, we obtain the inequality \eqref{eq790}. Hence $u_{\min}(\overline{T})>0$, a contradiction. So $u$ remains positive during the flow.
\end{proof}

%
To get long-time existence, we assume that $f$ is bounded.

\begin{Thm}[Long-time existence]\label{Thm:FlatLongTime}
Assume the setup of Proposition \ref{BoundSolutionFlat}. Assume that $f$ is bounded. Then the generalized Yamabe flow exists for all time.
\end{Thm}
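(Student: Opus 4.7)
The plan is to adapt the argument from Theorem \ref{Thm:NegLongTime} almost verbatim, with the only novelty being the derivation of an upper bound on $S$ in the flat setting. I would argue by contradiction: suppose $T < \infty$ is the maximal existence time. The goal is then to show that $u$ remains uniformly bounded in $C^{2,\alpha}(M)$ on $[0,T)$, so that a subsequential limit $u(T)\in\mathcal{O}$ can be extracted, contradicting maximality via short-time existence restarted from $u(T)$.

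The first step is a uniform $L^\infty$ control of both $u$ and $S$. Proposition \ref{BoundSolutionFlat} already supplies $c^{-1} \leq u \leq c$ uniformly in $t$. For the scalar curvature, Lemma \ref{LowerBoundScalar} yields the uniform lower bound $S_{\min} \geq \tilde{S}_{0,\min}$. The upper bound is the only place where boundedness of $f$ is genuinely used: if $\tilde{S}_{0,\max} \leq 0$, then Corollary \ref{scalar-min-max} forces $S_{\max} \leq 0$ for all time; otherwise, Lemma \ref{Lem:SUpperBund} (whose proof explicitly uses that $f$ is bounded) gives $S_{\max} \leq \tilde{S}_{0,\max} e^{Ct}$, hence $S$ is bounded on the finite interval $[0,T]$. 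Taken together, $S$ takes values in a compact interval depending on $T$, so $f'(S) \leq -c < 0$ uniformly there.

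With these ingredients, I would replay the three-step bootstrap of Theorem \ref{Thm:NegLongTime}. Since $S_0 \equiv 0$, the identity \eqref{eq:ConfScalarCurv} reduces to $-c_n \Delta_0 u = u^\beta S$, whose right-hand side is uniformly bounded; standard elliptic $W^{2,p}$ regularity plus Morrey's inequality upgrade this to $u \in C^{1,\alpha}(M)$ uniformly. Next, multiplying \eqref{eq:IntrinsicScalarCurvEvol1} by $f'(S)$ gives a linear parabolic equation for $f(S)$ whose leading operator is uniformly elliptic thanks to $f'(S) \leq -c$ together with the $C^{1,\alpha}$ control on $u$, and whose right-hand side is bounded. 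Krylov--Safonov estimates then give uniform $C^\alpha$ control of $f(S)$, and hence of $S$ via the mean value theorem combined with $|f'| \geq c$. Feeding this back into $-c_n \Delta_0 u = u^\beta S$ promotes $\Delta_0 u$ to $C^\alpha(M)$ uniformly, completing the $C^{2,\alpha}$ estimate.

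The main obstacle, and the raison d'être of the hypothesis that $f$ is bounded, is precisely the upper bound on $S_{\max}$: without it, the compactness of the range of $S$ is lost, and with it the uniform ellipticity $f'(S) \leq -c$ on which the Krylov--Safonov step rests. Once the $C^{2,\alpha}$ bound is established, the endgame is routine: the compact embedding $C^{\alpha}(M) \hookrightarrow C^{\alpha'}(M)$ for $\alpha' < \alpha$ yields a subsequential limit $u(T) \in C^{2,\alpha'}(M)$, and the uniform lower bound $u(T) \geq c^{-1} > 0$ together with $f'(S(T)) \leq -c < 0$ places $u(T) \in \mathcal{O}$. The short-time existence theorem then restarts the flow past $T$, contradicting maximality.
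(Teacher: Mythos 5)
Your proposal is correct and follows exactly the paper's route: the paper's proof consists precisely of invoking Lemma \ref{Lem:SUpperBund} for the upper scalar curvature bound (the only place boundedness of $f$ enters) and then repeating the bootstrap of Theorem \ref{Thm:NegLongTime}, which you have simply written out in full detail.
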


\begin{proof}
Since $f$ is bounded, we get an upper bound on the scalar curvature by Lemma \ref{Lem:SUpperBund}.
The proof is then essentially the same as for Theorem \ref{Thm:NegLongTime}. 
\end{proof}

\section{Long-time existence: the positive case}
\label{Section:Positive}

In this section, we will prove the long time existence of the flow when the initial scalar curvature is positive. To this end, we need bounds valid for finite time. We will set
\[u_0=1\]
again.

From Corollary \ref{scalar-min-max}, we know that $S\geq 0$. The plan is to use this to deduce an upper and a lower bound on $u$. We will show that this will follow once we have a lower bound on $A(t)$, $A(t)\geq a$ for some $a\in \R$. This is the case in the classical case, $f(x)=-x$, since then Lemma \ref{Lem:TotalScalarEvol} says
\[\sigma'(t)=-\frac{n-2}{2}\int_M (S-\sigma)^2\, d\Vol_g\leq 0.\]
So $a=-\sigma(0)$ is the lower bound. A lower bound also trivially follows if one assumes $f$ is bounded from below. More generally, we offer the following assumptions.

\begin{Assump}

The initial scalar curvature $S_0$ and the function $f$ satisfy:
\begin{itemize}
\item The initial scalar curvature $S_0$ is positive, $S_0>0$;
\item $f$ is strictly decreasing;
\item There are constants $\mu,\nu\geq 0$ and $1\leq \kappa\leq \frac{n}{2}$ such that 
\[-f(x)\leq \mu x^{\kappa} +\nu\]
for all $x\geq 0$;
\end{itemize}
\label{Assump:Positive}
\end{Assump}

\begin{Rem}
The case $\mu=0$ means that $-f$ is bounded. The arguments become quite a bit simpler in this case, and we will go over them later in the section.

One could probably drop the first assumption and instead work with $S+\xi$ for $\xi$ large enough as the Brendle does in \cite{Brendle}, \cite{BrendleYF} and the first two authors do in \cite{CLV}. 
\end{Rem}

\begin{Lem}
\label{Lem:ALower}
Impose Assumption \ref{Assump:Positive}.
 Then there is $a\in \R$ such that  $a\leq A=\int f(S)\, d\Vol_g$ independent of time.    
\end{Lem}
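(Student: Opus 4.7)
The plan is to show that the functional $t\mapsto \int_M S^\kappa\, d\Vol_g$ is non-increasing along the flow; once this monotonicity is in hand, the growth hypothesis on $-f$ at once yields a time-independent lower bound on $A$. Since $S_0>0$, Corollary \ref{scalar-min-max} gives $S\geq 0$ throughout, and using $\Vol_g\equiv \Vol_{g_0}=1$ the bound $-f(x)\leq \mu x^\kappa +\nu$ on $[0,\infty)$ produces
\[
A=\int_M f(S)\, d\Vol_g \geq -\mu\int_M S^\kappa\, d\Vol_g-\nu,
\]
reducing the problem to a uniform upper bound on $\int_M S^\kappa\, d\Vol_g$.

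To obtain this bound, I would differentiate $\int_M S^\kappa\, d\Vol_g$ in $t$ using \eqref{eq:IntrinsicScalarCurvEvol1} and \eqref{eq:VolumElemEvol}, integrate the Laplacian term by parts, and let the $A$-terms cancel to arrive at
\[
\partial_t\int_M S^\kappa\, d\Vol_g = (n-1)\kappa(\kappa-1)\int_M S^{\kappa-2} f'(S)\vert\nabla S\vert^2\, d\Vol_g + \left(\tfrac{n}{2}-\kappa\right)\int_M S^\kappa\bigl(f(S)-A\bigr)\, d\Vol_g.
\]
Both summands are non-positive. The first is immediate from $\kappa\geq 1$ and $f'\leq 0$. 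For the second, the prefactor is non-negative by $\kappa\leq n/2$, while the integral itself satisfies, via Chebyshev's integral inequality applied to the anti-comonotone pair $x\mapsto S(x)^\kappa$ (non-decreasing in $S$) and $x\mapsto f(S(x))$ (non-increasing in $S$),
\[
\int_M S^\kappa f(S)\, d\Vol_g \leq \int_M S^\kappa\, d\Vol_g\cdot \int_M f(S)\, d\Vol_g = A\int_M S^\kappa\, d\Vol_g.
\]
Hence $\int_M S^\kappa\, d\Vol_g$ is non-increasing, and bounding it by its value at $t=0$ finishes the proof.

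The main obstacle is spotting the correct quantity to monitor: the exponent $\kappa$ must match the one appearing in the growth bound on $f$, and the combination $\kappa\geq 1$ (which gives the sign of the $S^{\kappa-2}f'(S)\vert\nabla S\vert^2$ term) together with $\kappa\leq n/2$ (which gives the sign of the Chebyshev term) is exactly what makes the monotonicity work. A minor technical point, that $S^{\kappa-2}$ is singular at zeros of $S$ when $1<\kappa<2$, is harmless: any such zero is a minimum, so $\vert\nabla S\vert^2$ vanishes at least quadratically there, making the integrand bounded. If preferred, one can justify the integration by parts through the standard regularization $S\leadsto S+\varepsilon$ followed by $\varepsilon\to 0^+$.
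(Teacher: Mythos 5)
Your proof is correct, and its first step --- reducing the lemma to a time-independent upper bound on $\int_M S^\kappa\, d\Vol_g$ via $S\geq 0$ and the growth hypothesis on $-f$ --- is exactly the paper's. You diverge in how that bound is obtained. The paper invokes Corollary \ref{Cor:Sn/2Bound}: in the evolution equation of Lemma \ref{Lem:SpEvol} the zeroth-order term carries the factor $\left(\tfrac{n}{2}-p\right)$, which vanishes at the single exponent $p=\tfrac{n}{2}$, so $\norm{S}_{L^{n/2}(M,g)}$ is monotone for free, and H\"older's inequality (with $\Vol_g=1$) then controls $\norm{S}_{L^\kappa(M,g)}$ for all $\kappa\leq \tfrac{n}{2}$. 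You instead prove monotonicity of $\int_M S^\kappa\, d\Vol_g$ directly at the exponent $\kappa$, which forces you to confront the term $\left(\tfrac{n}{2}-\kappa\right)\int_M S^\kappa(f(S)-A)\, d\Vol_g$; your Chebyshev/anti-comonotonicity argument, giving $\int_M S^\kappa f(S)\, d\Vol_g\leq A\int_M S^\kappa\, d\Vol_g$ for the probability measure $d\Vol_g$, handles it correctly and is an observation the paper does not need. Both routes rely on the volume normalization $\Vol_g(M)=1$ (yours for Chebyshev, the paper's for the H\"older interpolation). The paper's version is marginally shorter because the awkward term simply disappears; yours gives the slightly cleaner conclusion $\int_M S^\kappa\, d\Vol_g\leq\int_M S_0^\kappa\, d\Vol_g$, and your treatment of the possible singularity of $S^{\kappa-2}$ at zeros of $S$ is appropriately careful (and rightly avoids Corollary \ref{Cor:SPos}, which is downstream of this lemma).
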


\begin{proof}
By Corollary \ref{scalar-min-max}, we know $S\geq 0$ for all time. Integrating the assumed bound on $f$, we find
\[-A=\int_M -f(S)\, d\Vol_g\leq \mu \int_M S^{\kappa}+\nu \, d\Vol_g=\mu \norm{S}^{\kappa}_{L^{\kappa}(M,g)}+\nu.\] 
Since $\kappa\leq \frac{n}{2}$ per assumption,  we can use Corollary \ref{Cor:Sn/2Bound} to find a time-independent bound for the right hand side. 
\end{proof}

\begin{Cor}
\label{Cor:SPos}
Assume the setup of Lemma \ref{Lem:ALower}. Assume\footnote{This can always be arranged by replacing $f\mapsto f-f(0)$. Adding a constant to $f$ does not change the normalized flow.} $f(0)=0$. Then we have
\[S_{\min} \geq S_{0,\min} e^{a t}\]
for any $t$. In particular, $S_{0}>0\implies S>0$ for any finite time.
\end{Cor}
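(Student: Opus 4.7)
The plan is to bound $S_{\min}$ from below via a differential inequality obtained by applying the parabolic minimum principle to the evolution equation \eqref{eq:IntrinsicScalarCurvEvol1} for $S$, and then integrate using Grönwall.

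First I would recall that under Assumption \ref{Assump:Positive} we already know from Corollary \ref{scalar-min-max} that $S \geq 0$ along the flow (the third bullet applies because $S_0 > 0$ implies $\tilde{S}_{0,\min} = S_{0,\min} > 0$), and from Lemma \ref{Lem:ALower} that $A(t) \geq a$ for all $t$. Now I evaluate the evolution equation
\[
\partial_t S = -(n-1)\Delta f(S) - S f(S) + A S
\]
at a point where $S$ attains its minimum. Since $f$ is strictly decreasing, $f(S_{\min})$ is a \emph{maximum} of $f(S)$, so $\Delta f(S_{\min}) \leq 0$, and hence $-(n-1)\Delta f(S_{\min}) \geq 0$. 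Moreover, because $f(0) = 0$ and $f$ is decreasing and $S_{\min} \geq 0$, one has $f(S_{\min}) \leq 0$, so $-S_{\min} f(S_{\min}) \geq 0$. Combining these with $A \geq a$ yields, in the sense of Dini lower derivatives,
\[
\partial_t S_{\min} \;\geq\; A \, S_{\min} \;\geq\; a \, S_{\min}.
\]

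Integrating this differential inequality (Grönwall) immediately gives
\[
S_{\min}(t) \;\geq\; S_{\min}(0)\, e^{a t} \;=\; S_{0,\min}\, e^{a t},
\]
which is the desired bound since $u_0 = 1$ implies $\tilde{S}_0 = S_0$. The second assertion then follows trivially: if $S_{0,\min} > 0$, the right-hand side is strictly positive for any finite $t$, so $S > 0$ is preserved on any bounded time interval.

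The only mildly delicate step is the passage from the pointwise evolution equation to the differential inequality for the nonsmooth quantity $S_{\min}(t)$, but this is the standard parabolic minimum principle (Hamilton's trick): one either works with smooth barriers, or with Dini derivatives, exactly as was used implicitly in the proof of Corollary \ref{scalar-min-max}. No other obstacle arises; the result is really a direct consequence of the sign structure of the right-hand side combined with the uniform lower bound on $A$ obtained in Lemma \ref{Lem:ALower}.
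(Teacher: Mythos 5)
Your proof is correct and follows essentially the same route as the paper: both evaluate \eqref{max-ineq} at the minimum, use $f(0)=0$ together with $S_{\min}\geq 0$ to discard the $-S_{\min}f(S_{\min})$ term, and integrate against the lower bound $A\geq a$ from Lemma \ref{Lem:ALower}. The only cosmetic difference is that the paper first integrates the full inequality via the integrating factor $\exp\bigl(\int_0^t (f(S_{\min})-A)\,d\tau\bigr)$ and drops $f(S_{\min})\leq 0$ afterwards, whereas you drop it before applying Grönwall.
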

\begin{proof}
Follows directly from the proof of Corollary \ref{scalar-min-max}. There we showed
\[S_{\min}\geq S_{0,\min} \exp\left(\int_0^t (A-f(S_{\min}))\, d\tau\right)\geq S_{0,\min}e^{a t},\]
where we have used $f(0)=0\implies f(S)\leq 0$. 
\end{proof}

\begin{Prop}
\label{Prop:PositiveuBounds}
Assume $A(t)\geq a$ for some $a\in \R$ for all $t\in [0,T]$. Then there is a constant $C(T)>0$ such that $\frac{1}{C(T)}\leq u\leq C(T)$ for $t\in [0,T]$.
\end{Prop}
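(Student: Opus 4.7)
The plan is to split the proof into an upper bound and a lower bound for $u$, the former obtained by a one-line integration of $\partial_t \log u$ and the latter via a weak Harnack argument on the conformal Laplacian equation.

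\textbf{Upper bound.} First I would use Corollary~\ref{scalar-min-max}, which gives $S(t)\geq 0$ on $[0,T]$ because $S_0>0$ and $u_0=1$. Since shifting $f$ by a constant does not affect the normalised flow, I may assume $f(0)=0$, whence $f(S)\leq 0$. Combined with the hypothesis $A(t)\geq a$, the flow equation yields
\[
\partial_t\log u \;=\; \tfrac{n-2}{4}\bigl(f(S)-A\bigr)\;\leq\;-\tfrac{n-2}{4}\,a,
\]
and integrating from $0$ to $t\leq T$ with $u(0)=1$ gives $u(t)\leq \exp\bigl(-(n-2)aT/4\bigr)=:C_+(T)$.

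\textbf{Lower bound.} For the lower bound I would exploit the conformal Laplacian identity $-c_n\Delta_0 u + S_0 u = Su^{\beta}\geq 0$, which shows that at every time $u$ is a positive classical supersolution of the \emph{fixed}, time-independent, uniformly elliptic operator $-\Delta_0 + S_0/c_n$ on $(M,g_0)$. The weak Harnack inequality for nonnegative supersolutions (e.g.\ \cite[Theorem 8.18]{GT}, chained over a finite covering of $M$) then provides constants $p>0$ and $K>0$, depending only on $(M,g_0)$, such that
\[
\norm{u}_{L^p(M,g_0)}\;\leq\; K\,\inf_M u\qquad \text{for every } t\in[0,T].
\]
On the other hand, the upper bound $u\leq C_+(T)$ together with the volume constraint $\int_M u^{2n/(n-2)}\,d\Vol_{g_0}=\Vol_{g_0}(M)$ gives, for any $0<p<2n/(n-2)$, the pointwise inequality $u^{p-2n/(n-2)}\geq C_+(T)^{p-2n/(n-2)}$, hence
\[
\int_M u^p\, d\Vol_{g_0}\;\geq\; C_+(T)^{p-2n/(n-2)}\,\Vol_{g_0}(M).
\]
Combining the two displays yields the required positive lower bound $\inf_M u\geq 1/C(T)>0$.

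\textbf{Main obstacle.} The upper bound is essentially immediate from $A\geq a$ and $S\geq 0$, so the whole game is in the lower bound. The key (and essentially the only delicate) point is the observation that at every time $u$ is a supersolution of \emph{one and the same} elliptic operator on the fixed background $(M,g_0)$, which makes the weak Harnack constant $K$ time-independent and absorbs all $T$-dependence into $C_+(T)$; in particular no pointwise control on $S$ is needed. If the exact weak Harnack reference should fail to apply verbatim, I would reach the same conclusion by running De Giorgi/Moser iteration directly on the supersolution inequality $-c_n\Delta_0 u + S_0 u\geq 0$.
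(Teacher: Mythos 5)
Your proof is correct and follows the same overall strategy as the paper's: the upper bound is obtained exactly as in the paper's Step 1 by integrating $\partial_t\log u=\tfrac{n-2}{4}(f(S)-A)\leq \tfrac{n-2}{4}(f(0)-a)$ (the paper does not normalize $f(0)=0$; note that after your shift the hypothesis reads $A\geq a-f(0)$, a harmless relabelling), and the lower bound rests on the same two ingredients, namely the pointwise inequality $-c_n\Delta_0 u+S_0u=Su^{\beta}\geq 0$ for the \emph{fixed} operator on $(M,g_0)$ and the volume constraint $\int_M u^{2n/(n-2)}\,d\Vol_{g_0}=\Vol_{g_0}(M)$. The implementations of the lower bound differ. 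The paper applies an elliptic estimate to $v=u^{-\alpha}$, which satisfies $(c_n\Delta_0+\alpha S_0)v\geq 0$ and is thus a subsolution, to get $\sup_M u^{-\alpha}\leq C\,u_{\max}^{-\alpha}$, and then uses the volume constraint only to conclude $u_{\max}\geq 1$; this yields a lower bound on $u$ that is independent of $T$. You instead apply the weak Harnack inequality \cite[Theorem 8.18]{GT} directly to $u$, a nonnegative supersolution of the same fixed operator, and bound $\norm{u}_{L^p}$ from below by combining the volume constraint with the upper bound $u\leq C_+(T)$; your lower bound therefore inherits the $T$-dependence of $C_+(T)$, which is all the proposition asks for. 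Your variant is, if anything, the cleaner application of the De Giorgi--Nash--Moser machinery: the weak Harnack inequality applies verbatim to the supersolution $u$, whereas a subsolution estimate such as \cite[Theorem 8.17]{GT} by itself only bounds $\sup v$ by an $L^p$-norm of $v$ rather than by $\inf v$, so the paper's citation has to be read as the combined (weak-Harnack-plus-local-boundedness) statement. The one step you should spell out when writing this up is the chaining of the local inequality $\norm{u}_{L^p(B_{2R})}\leq C R^{n/p}\inf_{B_R}u$ over a finite cover of the compact $M$ to the global bound $\norm{u}_{L^p(M,g_0)}\leq K\inf_M u$ (comparing infima over consecutive overlapping balls via the $L^p$-average on the overlap); this is routine but not literally contained in the quoted theorem.
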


\begin{proof}
 \noindent\textbf{Step 1 -- upper bound on $u$:}
\medskip

 The evolution equation for $u$, \eqref{eq:uEq}, says
\[\partial_t u  =\frac{n-2}{4}(f(S)-A) u.\]
Since $f$ is decreasing and $S\geq 0$, we can bound the right hand side by $ f(S)-A\leq f(0)-a \eqqcolon c$. Integrating, we find for $t\in [0,T]$
\[u\leq \exp\left(\frac{n-2}{4}  c T\right).\]

\noindent\textbf{Step 2 -- lower bound on $u$:}
\medskip
Recall that \eqref{eq:ConfScalarCurv} says
\[-c_n \Delta_0 u+S_0 u=u^{\beta} S,\]
where $\beta=\frac{n+2}{n-2}$ and $c_n=\frac{4(n-1)}{n-2}$. Let $\alpha>0$. Then
\[-c_n \Delta_0 u^{-\alpha}=\alpha c_n u^{-\alpha-1}\Delta_0 u-c_n \alpha (\alpha-1)u^{-\alpha-2}\vert \nabla u\vert^2_{g_0}\leq \alpha c_n u^{-\alpha-1}\Delta_0 u.\]
By the above, 
\[\alpha c_n u^{-\alpha-1}\Delta_0 u=\alpha \left(S_0 u^{-\alpha}-u^{\beta-\alpha-1}S\right)\leq \alpha S_0 u^{-\alpha}.\]
So
\[-c_n \Delta_0 u^{-\alpha} \leq \alpha S_0 u^{-\alpha}.\]
The function $v=u^{-\alpha}$ is therefore a subsolution to an elliptic equation,
\[(c_n \Delta_0 +\alpha S_0)v \geq 0.\] By the Harnack inequality, \cite[Theorem 8.17]{GT} for instance\footnote{The quoted theorem is for a ball in $\R^n$. Since $M$ is compact, we can cover it by finitely many sets diffeomorphic to balls in $\R^n$ and apply the theorem to each ball.}, we get a bound
\[u^{-\alpha}\leq C  (u^{-\alpha})_{\min}=C u_{\max}^{-\alpha}.\]
for some constant $C>0$. The right hand side is bounded since the volume is constant along the flow;
\[1=\Vol_g(M)=\int_M u^{\frac{2n}{n-2}}d\Vol_{g_0}\leq u_{\max}^{\frac{2n}{n-2}}\int d\Vol_{g_0}=u_{\max}^{\frac{2n}{n-2}}.\]
So
\[u^{-\alpha}\leq C.\]

\end{proof}

A corollary of these results is that if $S_0>0$, then $f(S)=-S^{\kappa}$ is an allowable function for any $1\leq \kappa\leq \frac{n}{2}$.  

\subsection{Upper bound on the scalar curvature when $f$ is bounded}

We now assume $f$ is bounded from below. I.e. Assumption \ref{Assump:Positive} with $\mu=0$. Changing $f$ by adding some constant $C$ does not change the flow. So assuming $f$ is bounded from below means we may (and will) assume $f(x)\geq 0$ for all $x\geq 0$. 
\begin{Rem}
 The classical Yamabe flow, $f(x)=-x$, does \textit{not} have this property. Functions which do satisfy Assumption \ref{Assump:Positive} with $\mu=0$ include $f(x)=\exp(-\alpha x)$, $f(x)=\exp\left(-(x+\alpha)^\beta\right)$, and $f(x)=(x+\alpha)^{-\beta}$, for $\alpha,\beta>0$.
\end{Rem}

\begin{Thm}
\label{Thm:PosLongTime}
Assume $S_0>0$ and let $f$ be as in Assumption \ref{Assump:Positive} with $\mu=0$. Then the generalized Yamabe flow \eqref{eq:uEq} has a unique solution $u$ for any $t\in [0,\infty)$.
\end{Thm}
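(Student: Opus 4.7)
The plan is to mimic the bootstrap of Theorem \ref{Thm:NegLongTime}, but with bounds that are merely finite-in-time rather than uniform. By the short-time existence theorem it suffices, given any $T<\infty$, to establish a uniform $C^{2,\alpha}(M)$ bound on $u$ over $[0,T]$ and to verify that $f'(S)\leq -c_T<0$ uniformly on that interval; this will contradict the maximality of the existence interval were it finite, yielding $T=\infty$.

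First, since $\mu=0$ we may shift $f$ so that $f\geq 0$, hence $A\geq 0$ on the existence interval (this is also a special case of Lemma \ref{Lem:ALower}). Corollary \ref{scalar-min-max} gives $S\geq 0$ (since $S_0>0$), and Lemma \ref{Lem:SUpperBund} provides $S_{\max}\leq \tilde{S}_{0,\max}e^{CT}\eqqcolon M_T$ on $[0,T]$. Consequently $S$ takes values in the compact interval $[0,M_T]$, on which $f$ is strictly decreasing and $C^2$; therefore $f'(S)\leq -c_T<0$ uniformly on $[0,T]$. Simultaneously, Proposition \ref{Prop:PositiveuBounds} (applied with the bound $a=-\sup_{x\in[0,M_T]}|f(x)|$ on $A$, or even just with $A\geq 0$) yields $C_T^{-1}\leq u\leq C_T$ on $[0,T]$.

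The regularity bootstrap then proceeds exactly as in the three steps of Theorem \ref{Thm:NegLongTime}. From $-c_n\Delta_0 u = u^\beta S - S_0 u$ the right-hand side is bounded in $L^\infty$ on $[0,T]$, so $u\in W^{2,p}(M)$ uniformly for all finite $p$ and hence $u\in C^{1,\alpha}(M)$ uniformly on $[0,T]$ by Morrey's inequality. Multiplying the evolution equation \eqref{eq:IntrinsicScalarCurvEvol1} by $f'(S)$ yields
\begin{equation*}
\bigl(\partial_t + (n-1)f'(S)\Delta\bigr)f(S) = -Sf'(S)(f(S)-A),
\end{equation*}
a linear parabolic equation with uniformly bounded coefficients (using $f'(S)\leq -c_T$, $u\in C^{1,\alpha}$, and the bounds on $S$) and bounded right-hand side. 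The Krylov-Safonov estimate, exactly as cited in the negative case, gives $f(S)\in C^\alpha(M)$ uniformly on $[0,T]$; since $f^{-1}$ is $C^1$ on $[f(M_T),f(0)]$, one deduces $S\in C^\alpha(M)$ uniformly, and feeding this back into $-c_n\Delta_0 u = u^\beta S - S_0 u$ yields $\Delta_0 u\in C^\alpha(M)$ uniformly on $[0,T]$.

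With these uniform $C^{2,\alpha}$ bounds, compact embedding $C^\alpha\hookrightarrow C^{\alpha'}$ for $\alpha'<\alpha$ produces a subsequential limit $u(T)\in \mathcal{O}$, at which the flow can be restarted, contradicting the presumed maximality of $T$. Uniqueness follows from the standard parabolic comparison theory for the (strongly parabolic) linearization computed in Section \ref{Sec:Linearisation}. The main delicate point is step two of the bootstrap: invoking Krylov-Safonov here is clean only because $\mu=0$ keeps $S$ finite on $[0,T]$ so that $f'(S)\leq -c_T$; the general case $\mu>0$ would require an additional argument to control $S_{\max}$, which is precisely why the stronger theorem is restricted to bounded $f$.
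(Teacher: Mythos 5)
Your argument is correct and follows essentially the same route as the paper: the paper's proof of Theorem \ref{Thm:PosLongTime} likewise reduces to the bootstrap of Theorem \ref{Thm:NegLongTime}, using Lemma \ref{Lem:PosScalarBound} to confine $S([0,T])$ to a compact subset of $[0,\infty)$ so that $f'(S)\leq -c(T)$, together with the finite-time bounds on $u$. You have merely spelled out in detail the steps the paper summarizes as ``the rest of the proof goes through verbatim.''
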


The idea is of course to get uniform $C^{2,\alpha}$-bounds on the solution $u$ for any finite time. We first prove a couple of lemmas. They all use the same assumptions as Theorem \ref{Thm:PosLongTime}, but we will not keep writing this. 

\begin{Lem}
We have $S(t)\geq 0$ uniformly for any finite time interval $[0,T]$ and $S_{\max}(t)\leq C(T) S_{0,\max}$.
\label{Lem:PosScalarBound}
\end{Lem}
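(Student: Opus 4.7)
The plan is to derive both bounds by directly invoking results already established in Section~2; no new estimate is required.

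For the lower bound, recall that the hypothesis $u_0=1$ forces $\tilde S_0 = S_0 > 0$ pointwise, so in particular $\tilde S_{0,\min} > 0$. The third bullet of Corollary~\ref{scalar-min-max} then asserts that $S_{\min}(t) \geq 0$ for as long as the flow exists, which gives $S(t) \geq 0$ on every time interval $[0,T]$ on which the solution is defined.

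For the upper bound, the idea is to apply Lemma~\ref{Lem:SUpperBund}. I need to check its three hypotheses. Strict monotonicity of $f$ (from Assumption~\ref{Assump:Positive}) gives $f' \leq 0$; the condition $\tilde S_{0,\max} > 0$ holds because $u_0 = 1$ and $S_0 > 0$; and boundedness of $f$ on the range of values actually encountered holds since (i) with $\mu = 0$ Assumption~\ref{Assump:Positive} yields $-f(x) \leq \nu$ for all $x \geq 0$, giving a lower bound on $f$, and (ii) $f$ is decreasing on $[0,\infty)$, so $f(x) \leq f(0)$ for $x \geq 0$, giving an upper bound. By the previous paragraph the scalar curvature values we see all lie in $[0,\infty)$, so $f(S)$ stays in a bounded set. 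Lemma~\ref{Lem:SUpperBund} then supplies a constant $C > 0$ (independent of $T$) with
\[
S_{\max}(t) \leq \tilde S_{0,\max}\, e^{Ct} = S_{0,\max}\, e^{Ct}.
\]
Setting $C(T) := e^{CT}$ yields $S_{\max}(t) \leq C(T)\, S_{0,\max}$ for every $t \in [0,T]$, as required.

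There is no real obstacle here: the statement is essentially a bookkeeping combination of Corollary~\ref{scalar-min-max} and Lemma~\ref{Lem:SUpperBund}, and the only point requiring attention is confirming that the restriction $S \geq 0$ is exactly what is needed to reduce the boundedness of $f$ on $\mathbb{R}$ (which is not assumed) to boundedness on $[0,\infty)$ (which follows from $\mu = 0$ together with monotonicity).
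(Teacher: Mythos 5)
Your proof is correct and follows the same route as the paper: the lower bound is the third bullet of Corollary~\ref{scalar-min-max} (applicable since $u_0=1$ gives $\tilde S_{0,\min}=S_{0,\min}>0$), and the upper bound is Lemma~\ref{Lem:SUpperBund}, whose boundedness hypothesis you correctly reduce to boundedness of $f$ on $[0,\infty)$ via $S\geq 0$, the bound $-f\leq\nu$ from $\mu=0$, and monotonicity. The paper's proof is just a two-line pointer to those same results, so your write-up is simply a more explicit version of it.
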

\begin{proof}
We did the lower bound above. The upper bound is Lemma \ref{Lem:SUpperBund}.
\end{proof}

%
%
%
\begin{Rem}
The constant $C(T)$ is exponentially growing in $T$. 
We note how much weaker these bounds are compared to the negative scalar curvature case, Proposition \ref{Prop:EstimateGeneralizedScalar}. In particular, they are of no use as $t\to \infty$.
\end{Rem}

Since $f$ is bounded from below, Assumption \ref{Assump:Positive} holds and we may use Proposition \ref{Prop:PositiveuBounds}. But one can also give a more direct argument:
\begin{Lem}
The solution $u(t)$ to the generalized Yamabe flow \eqref{eq:uEq} (with $u(0)=1$ satisfies
\[\exp\left(-\frac{4}{n-2} f(0)t\right) \leq u(t)\leq \exp\left(\frac{4}{n-2} f(0)t\right).\]
\end{Lem}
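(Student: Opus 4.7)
The plan is to bound the logarithmic time-derivative of $u$ pointwise and then integrate. Since $u_0 = 1$, Corollary \ref{Cor:SPos} (together with the convention that $f$ has been shifted so that $f \geq 0$ for $x \geq 0$) guarantees that $S \geq 0$ throughout the flow on any finite time interval. Because $f$ is decreasing on $[0,\infty)$ and non-negative, this gives the two-sided bound
\[
0 \leq f(S) \leq f(0).
\]
Integrating this inequality against $d\Vol_g$ and dividing by the (constant) total volume immediately yields $0 \leq A(t) \leq f(0)$ as well, so
\[
|f(S) - A| \leq f(0).
\]

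Next I would rewrite the evolution equation \eqref{eq:uEq} in logarithmic form,
\[
\partial_t \ln u = \frac{n-2}{4}\bigl(f(S) - A\bigr),
\]
so that the previous pointwise bound gives $|\partial_t \ln u| \leq \tfrac{n-2}{4} f(0)$ pointwise in $x$ and $t$. Integrating from $0$ to $t$ and using $\ln u(0) = 0$ yields $|\ln u(t)| \leq \tfrac{n-2}{4} f(0)\, t$, which exponentiates to the claimed two-sided bound (up to the standard ambiguity in whether the constant in the exponent should read $\tfrac{n-2}{4}$ or $\tfrac{4}{n-2}$, which depends only on whether one takes the log of $u$ or of the conformal factor $u^{4/(n-2)}$ in $g = u^{4/(n-2)}g_0$; either way the argument is the same).

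There is essentially no obstacle here: the only non-trivial ingredient is the preservation of $S \geq 0$, which is already in hand from Corollary \ref{Cor:SPos}. The proof gives a more direct, $T$-linear exponential bound than the one from Proposition \ref{Prop:PositiveuBounds}, at the cost of using the stronger assumption that $f$ is bounded from below.
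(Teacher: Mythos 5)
Your proof is correct and follows essentially the same route as the paper: use the preservation of $S\geq 0$ and the monotonicity and non-negativity of $f$ to get $0\leq f(S)\leq f(0)$ and $0\leq A\leq f(0)$, then integrate the resulting two-sided bound on $\partial_t \ln u$. Your remark about the constant is also on point -- with \eqref{eq:uEq} reading $\partial_t u = \tfrac{n-2}{4}(f(S)-A)u$, the exponent should be $\tfrac{n-2}{4}f(0)t$ rather than $\tfrac{4}{n-2}f(0)t$, an inconsistency present in the paper's own statement and proof.
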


\begin{proof}
By Assumption \ref{Assump:Positive} and Lemma \ref{Lem:PosScalarBound}, $f(S)\geq 0$ for all time. So 
\[0\leq f(S)\leq f(0), \quad 0\leq A\leq f(0)\] 
for all time. Hence
\[-\frac{4}{n-2}f(0)u\leq \frac{4}{n-2} (f(S)-A)u=\partial_t u =\frac{4}{n-2} (f(S)-A)u\leq \frac{4}{n-2} f(0)u.\]
Integrating this gives the claim.  
\end{proof}

\begin{proof}[Proof of Theorem \ref{Thm:PosLongTime}]
We can use the same proof as for Theorem \ref{Thm:NegLongTime} with the following modifications. 
For any finite time, $T$, Lemma \ref{Lem:PosScalarBound} says that $S([0,T])$ lands inside a compact subset of $(0,\infty)$ (but not necessarily $[S_{0,\min},S_{0,\max}]$). By Assumption \ref{Assump:Positive}, there is a $c=c(T)>0$ such that $f'(S)\leq -c$ for $t\in [0,T]$. The rest of the proof goes through verbatim. 
\end{proof}

\subsection{Some ideas when not assuming $f$ to be bounded}
We end the section with some ideas of what to try if one does not assume $f$ to be bounded. If one keeps Assumption \ref{Assump:Positive}, one only has to get an upper bound on the scalar curvature. We will additionally assume the Yamabe constant to be positive\footnote{This is not a big assumption. If $Y(M,[g_0])\leq 0$, one can appeal to \cite{Trudinger} to conformally change the metric to have negative or vanishing scalar curvature, which would land us in one of the easier cases.}
\[Y(M,[g_0])>0.\]

 We suggest the following steps. Prove:
\begin{enumerate}
\item  There is $q>\frac{n}{2}$ such that $S\in L^q(M,g)$ for all time.
\item  There is the following Sobolev inequality. There are time-independent constants $A,B>0$ such that 
\begin{equation}
\label{eq:StrongYamabeSobolev}
A\norm{\phi}^2_{L^{\frac{2n}{n-2}}(M,g)}\leq \norm{\nabla \phi}^2_{L^2(M,g)}+B\norm{\phi}^2_{L^2(M,g)}
\end{equation}
holds for any $\phi\in H^1(M,g)$.
\item For any $T<\infty$, there is a constant $C(T)$ such that
\[\int_0^T \left( \int_M S^{\frac{n^2}{2(n-2)}}\, d\Vol_g\right)^{\frac{n-2}{n}}\, dt\leq C(T).\]
\item For any $T<\infty$, there is a constant $C(T)$ such that
\[S\leq C(T).\]
\end{enumerate}

We do not attempt to prove the steps (1) and (4) here.  We suspect one could get them from the evolution equations of Appendix \ref{Appendix} as in the classical case (see \cite[pp. 68-71]{SS},  \cite[Lemma 2.3, Proposition 2.6]{Brendle}). This is somewhat intricate as the evolution equations are messier for the generalized Yamabe flow. Another idea to get (4) from (1)-(3) is to use Moser iteration similarly to \cite[Theorem 4.1]{LV}.

\begin{Lem}[Step (2)]
Let $q>\frac{n}{2}$. Assume there is a time-independent constant $C>0$ such that $\norm{S}_{L^q(M,g)}\leq C$. Then there are time-independent constants $A,B>0$ such that 
\begin{equation}
\notag
A\norm{\phi}^2_{L^{\frac{2n}{n-2}}(M,g)}\leq \norm{\nabla \phi}^2_{L^2(M,g)}+B\norm{\phi}^2_{L^2(M,g)}
\end{equation}
holds for any $\phi\in H^1(M,g)$.
\end{Lem}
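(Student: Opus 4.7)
The plan is to combine the definition of the Yamabe constant (which is assumed positive in this subsection) with the hypothesis $\|S\|_{L^q(M,g)}\leq C$ via Hölder's inequality and interpolation. The point of the condition $q>\frac{n}{2}$ is precisely to ensure that the resulting exponent on $\phi$ sits strictly between $2$ and $\frac{2n}{n-2}$, which allows the bad term to be absorbed into the critical Sobolev term.

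First, I would invoke the Yamabe functional: for $\phi \in H^1(M,g)$ not identically zero, the conformal change formula for the scalar curvature applied to $\tilde{g} = \phi^{4/(n-2)} g$ combined with the defining inequality of $Y(M,[g_0])>0$ yields
\begin{equation*}
Y(M,[g_0])\,\|\phi\|^2_{L^{\frac{2n}{n-2}}(M,g)} \leq c_n \|\nabla \phi\|^2_{L^2(M,g)} + \int_M S\phi^2\, d\Vol_g,
\end{equation*}
where $c_n = \tfrac{4(n-1)}{n-2}$. This is the only place where the positivity of the Yamabe constant enters.

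Second, I would estimate the scalar curvature term by Hölder's inequality with exponents $q$ and $q/(q-1)$:
\begin{equation*}
\int_M S \phi^2\, d\Vol_g \leq \|S\|_{L^q(M,g)}\, \|\phi\|^2_{L^r(M,g)}, \qquad r \coloneqq \frac{2q}{q-1}.
\end{equation*}
A short calculation shows that $q > n/2$ is equivalent to $r < \frac{2n}{n-2}$, and trivially $r > 2$. Thus $r$ lies strictly between the two Lebesgue exponents of interest, and a standard interpolation inequality gives $\theta \in (0,1)$ with
\begin{equation*}
\|\phi\|_{L^r(M,g)} \leq \|\phi\|_{L^2(M,g)}^{1-\theta}\, \|\phi\|_{L^{\frac{2n}{n-2}}(M,g)}^{\theta}.
\end{equation*}

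Third, I would square this, multiply by $\|S\|_{L^q}\leq C$, and apply Young's inequality with conjugate exponents $\tfrac{1}{1-\theta},\tfrac{1}{\theta}$: for every $\varepsilon > 0$ there is $C_\varepsilon$ such that
\begin{equation*}
\|S\|_{L^q}\,\|\phi\|^2_{L^r} \leq \varepsilon\, \|\phi\|^2_{L^{\frac{2n}{n-2}}(M,g)} + C_\varepsilon\, \|\phi\|^2_{L^2(M,g)}.
\end{equation*}
Choosing $\varepsilon = \tfrac{1}{2}Y(M,[g_0])$ and substituting into the Yamabe inequality from the first step, the critical-exponent contribution on the right can be absorbed into the left, yielding
\begin{equation*}
\tfrac{1}{2}Y(M,[g_0])\,\|\phi\|^2_{L^{\frac{2n}{n-2}}(M,g)} \leq c_n\|\nabla\phi\|^2_{L^2(M,g)} + C_\varepsilon\|\phi\|^2_{L^2(M,g)}.
\end{equation*}
Dividing by $c_n$ produces the desired constants $A = \tfrac{Y(n-2)}{8(n-1)}$ and $B = C_\varepsilon/c_n$, both independent of $t$.

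There is no real obstacle here; the subtle part is just bookkeeping the exponent relation to confirm that $q > n/2$ is exactly what makes $r$ sub-critical and therefore makes the Young absorption work. The only mild care needed is that all the $L^p$-norms are with respect to the evolving metric $g$, but this is harmless because both the Yamabe inequality (via its conformal invariance) and the hypothesis on $\|S\|_{L^q(M,g)}$ are stated with respect to $g$.
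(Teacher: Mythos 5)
Your proof is correct and follows essentially the same route as the paper's: the conformally invariant Yamabe--Sobolev inequality, H\"older with exponents $q$ and $q/(q-1)$, interpolation of the intermediate norm between $L^2$ and $L^{2n/(n-2)}$ (the paper phrases this as interpolating $\norm{\phi^2}_{L^p}$ between $L^1$ and $L^{n/(n-2)}$, which is the same statement), and Young's inequality to absorb the critical term using $Y(M,[g_0])>0$. The exponent bookkeeping, including the equivalence of $q>n/2$ with subcriticality of $r=2q/(q-1)$, matches the paper's $\theta=n/(2q)<1$.
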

\begin{proof}
The conformal invariance of the Yamabe constant $Y=Y(M,[g_0])$ gives us a similar bound:
\begin{equation}
Y\norm{\phi}_{L^{\frac{2n}{n-2}}(M,g)}^2\leq c_n \norm{\nabla \phi}^2_{L^2(M,g)}+\int_M S \phi^2\, d\Vol_g.
\label{eq:YamabeSobolev}
\end{equation}

We estimate the last term using the H\"{o}lder inequality with $q$ and $p=\frac{q}{q-1}$;
\[\int_M S \phi^2\, d\Vol_g\leq \norm{S}_{L^q(M,g)} \norm{\phi ^2}_{L^p(M,g)}\leq C\norm{\phi^2}_{L^p(M,g)},\]
where we have inserted the assumed bound on $ \norm{S}_{L^q(M,g)}$. Since $q>\frac{n}{2}$, we have $1<p<\frac{n}{n-2}$, and we may interpolate between the $L^1$ and the $L^{\frac{n}{n-2}}$-norms as follows\footnote{The general statement is this. Fix $p_0<p<p_1$ and choose $\theta$ so that $\frac{1}{p}=\frac{1-\theta}{p_0}+\frac{\theta}{p_1}$. Then $\norm{\phi}_{L^p}\leq \norm{\phi}_{L^{p_0}}^{1-\theta}\norm{\phi}^{\theta}_{L^{p_1}}$, and one checks this by applying the H\"{o}lder inequality to $\phi=\phi^{1-\theta} \phi^{\theta}$.}  
\[\norm{\phi^2}_{L^p(M,g)}\leq \norm{\phi^2}_{L^1(M,g)}^{1-\theta} \norm{\phi^2}^\theta_{L^{\frac{n}{n-2}}(M,g)},\]
where $\theta=\frac{n}{2q}<1$. To this product we apply Young's inequality $ab\leq \theta (\epsilon^\theta a)^{\frac{1}{\theta}} +(1-\theta)(\epsilon^{-\theta}b)^{\frac{1}{1-\theta}}$ for any $\epsilon>0$ to deduce
\[\norm{\phi^2}_{L^p(M,g)}\leq  \theta \epsilon \norm{\phi}^2_{L^{\frac{2n}{n-2}}(M,g)}+ (1-\theta) \epsilon^{-\frac{\theta}{1-\theta}} \norm{\phi}^2_{L^2(M,g)}.\]
Inserting this back into \eqref{eq:YamabeSobolev} leaves us with
\[Y \norm{\phi}^2_{L^{\frac{2n}{n-2}}(M,g)}\leq c_n\norm{ \nabla \phi}^2_{L^2(M,g)} + C\left( \theta \epsilon \norm{\phi}^2_{L^{\frac{2n}{n-2}}(M,g)}+ (1-\theta) \epsilon^{-\frac{\theta}{1-\theta}} \norm{\phi}^2_{L^2(M,g)}\right),\]
which can be written as
\[\left( Y- C \theta \epsilon\right) \norm{\phi}^2_{L^{\frac{2n}{n-2}}(M,g)}\leq c_n\norm{ \nabla \phi}^2_{L^2(M,g)}+ C(1-\theta) \epsilon^{-\frac{\theta}{1-\theta}} \norm{\phi}^2_{L^2(M,g)}.\]
Choosing $\epsilon$ small enough ensures the left hand side is non-negative (here we are using $Y(M,g_0)>0$) and we deduce \eqref{eq:StrongYamabeSobolev}. 
\end{proof}
\begin{Rem}
The above corollary is a general fact about the Yamabe constant's relation to a Sobolev inequality. It has nothing to the with the particular flow under study. The flow enters the discussion when showing that the assumption $\norm{S}_{L^q(M,g)}\leq C$ is satisfied.
\end{Rem}

\begin{Lem}[Step (3)]
For any $T<\infty$, there is a constant $C(T)$ such that
\[\int_0^T \left( \int_M S^{\frac{n^2}{2(n-2)}}\, d\Vol_g\right)^{\frac{n-2}{n}}\, dt\leq C(T).\]
\end{Lem}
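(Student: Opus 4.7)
The natural strategy parallels the argument in the classical Yamabe flow case (cf.\ \cite[Lemma 2.3]{Brendle}). I would start by applying the Sobolev inequality of the previous lemma to the test function $\phi = S^{n/4}$, which is well-defined and nonnegative by Corollary \ref{scalar-min-max} together with $S_0 > 0$ from Assumption \ref{Assump:Positive}. Using
\[
\|S^{n/4}\|_{L^{2n/(n-2)}(M,g)}^2 = \Bigl(\int_M S^{n^2/(2(n-2))}\,d\Vol_g\Bigr)^{(n-2)/n},
\qquad
|\nabla S^{n/4}|_g^2 = \tfrac{n^2}{16}\,S^{n/2-2}|\nabla S|_g^2,
\]
the Sobolev inequality yields, pointwise in $t$,
\[
A\Bigl(\int_M S^{n^2/(2(n-2))}\,d\Vol_g\Bigr)^{\!(n-2)/n}
\leq \tfrac{n^2}{16}\int_M S^{n/2-2}|\nabla S|_g^2\,d\Vol_g + B\int_M S^{n/2}\,d\Vol_g.
\]
Integrating over $[0,T]$, the second term on the right is dominated by $BT\sup_{t\in[0,T]}\int_M S^{n/2}\,d\Vol_g$, which is finite by Step~(1) combined with H\"older's inequality (using $\Vol_g(M)=1$ and $q>n/2$). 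The task is thereby reduced to producing a $T$-dependent bound on $\int_0^T\!\int_M S^{n/2-2}|\nabla S|_g^2\,d\Vol_g\,dt$.

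To obtain such a bound, I would compute the evolution of $\int_M S^{n/2}\,d\Vol_g$ directly from \eqref{eq:IntrinsicScalarCurvEvol1} and \eqref{eq:VolumElemEvol}. Differentiating, inserting the scalar curvature evolution, integrating by parts against $\tfrac{n}{2}S^{n/2-1}$, and combining with the volume element contribution, all terms linear in $f(S)-A$ cancel exactly, reflecting the conformal invariance of $\int S^{n/2}\,d\Vol$. This leaves the clean monotonicity identity
\[
\frac{d}{dt}\int_M S^{n/2}\,d\Vol_g = \frac{n(n-1)(n-2)}{4}\int_M f'(S)\,S^{n/2-2}|\nabla S|_g^2\,d\Vol_g,
\]
which is nonpositive since $f'<0$. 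Integrating in time gives the weighted gradient estimate
\[
\int_0^T\!\int_M (-f'(S))\,S^{n/2-2}|\nabla S|_g^2\,d\Vol_g\,dt \leq \frac{4}{n(n-1)(n-2)}\int_M S_0^{n/2}\,d\Vol_{g_0},
\]
uniformly in $T$.

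The main obstacle is removing the weight $-f'(S)$ to recover the unweighted gradient bound required above. Assumption \ref{Assump:Positive} guarantees only $f'<0$ pointwise; in the classical case $f(x)=-x$ one trivially has $-f'\equiv 1$, so this difficulty does not arise. I would therefore close the argument under the additional structural hypothesis $-f'(s)\geq c_0>0$ on the relevant range, under which the sought bound is immediate. A more intrinsic route, avoiding this extra hypothesis, is to apply the Sobolev inequality instead to $\phi = P(S)$ with $P(s)=\int_0^s \sqrt{-f'(\sigma)}\,\sigma^{n/4-1}\,d\sigma$, engineered so that $|\nabla P(S)|_g^2$ exactly equals the weighted density already controlled. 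This delivers an $L^{2n/(n-2)}$-bound on $P(S)$; translating back to the stated $L^{n^2/(2(n-2))}$-bound on $S$ reduces again to a comparison $P(s)\geq c\,s^{n/4}$, i.e.\ to $\sqrt{-f'(s)}$ being bounded below. The zero-order term $\int_0^T\!\int_M P(S)^2\,d\Vol_g\,dt$ is then controlled via Step~(1) provided $P(s)$ grows no faster than $s^{n/4}$, and the lemma follows.
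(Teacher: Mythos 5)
Your proposal follows essentially the same route as the paper: test the Sobolev inequality of Step (2) with $\phi=S^{n/4}$, combine it with the monotonicity identity $\frac{d}{dt}\int_M S^{n/2}\,d\Vol_g=\frac{n(n-1)(n-2)}{4}\int_M f'(S)\,S^{n/2-2}\vert\nabla S\vert_g^2\,d\Vol_g$ (Lemma \ref{Lem:SpEvol} with $p=n/2$), control the zero-order term, and integrate in time. The one place you diverge is instructive: the paper discharges the weight $-f'(S)$ by asserting that Assumption \ref{Assump:Positive} together with $S>0$ (Corollary \ref{Cor:SPos}) gives a uniform bound $f'(S)\leq -c(T)$ on $[0,T]$, whereas you correctly observe that strict monotonicity of $f$ alone does not yield this on the non-compact range $[\delta,\infty)$ to which $S$ is a priori confined before the upper bound of Step (4) is available (e.g.\ $f(x)=e^{-x}$ has $f'\to 0$); your two remedies --- assuming $-f'\geq c_0$ outright, or testing with $P(S)$ where $P'(s)=\sqrt{-f'(s)}\,s^{n/4-1}$ --- both reduce to the same lower bound on $-f'$, so neither removes the hypothesis, but flagging it explicitly is a genuine gain in precision over the paper's phrasing. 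One small simplification you could adopt: the term $B\int_M S^{n/2}\,d\Vol_g$ is bounded unconditionally by $\norm{S_0}_{L^{n/2}}^{n/2}$ via Corollary \ref{Cor:Sn/2Bound}, so there is no need to invoke the (unproven) Step (1) for it.
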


\begin{proof}
The evolution for $\norm{S}_{L^p(M,g)}$ of Lemma \ref{Lem:SpEvol} with $p=\frac{n}{2}$ says
\[\frac{d}{dt} \norm{S}^{\frac{n}{2}}_{\frac{n}{2}}=\frac{4(n-2)(n-1)}{n}\int_M f'(S) \vert \nabla S^{\frac{n}{4}}\vert\, d\Vol_g.\]
We know $S>0$ for any finite time due to Corollary \ref{Cor:SPos}. By assumption \ref{Assump:Positive}, we have a uniform bound $f'(S)\leq -c$ (depending on $T$). Using the Sobolev inequality \eqref{eq:StrongYamabeSobolev} tells us
\[\frac{d}{dt} \norm{S}^{\frac{n}{2}}_{\frac{n}{2}}\leq -c_1 \norm{S^{\frac{n}{4}}}^2_{\frac{2n}{n-2}}+c_2 \norm{S^\frac{n}{4}}^2_2.\]
Now 
\[\norm{S^{\frac{n}{4}}}^2_2=\norm{S}_{\frac{n}{2}}^{\frac{n}{2}}\leq \norm{S_0}_{\frac{n}{2}}^{\frac{n}{2}}\]
by Corollary \ref{Cor:Sn/2Bound}. Integrating both sides from $0$ to $T$ then tells us
\[\int_0^T  \norm{S^{\frac{n}{4}}}^2_{\frac{2n}{n-2}}\, dt\leq C_1\left(\norm{S_0}^{\frac{n}{2}}_{\frac{n}{2}}-\norm{S}^{\frac{n}{2}}_{\frac{n}{2}}\right)+C_2T\norm{S_0}^{\frac{n}{2}}_{\frac{n}{2}}\leq (C_1+C_2T)\norm{S_0}^{\frac{n}{2}}_{\frac{n}{2}}.\]
This is the claim with $C(T)=(C_1+C_2T)\norm{S_0}^{\frac{n}{2}}_{\frac{n}{2}}$.
\end{proof}

\appendix

\section{Short-time existence via linearization}
\label{appendix-short-time}

In this section we outline an alternative argument for 
obtaining short time existence of the generalized Yamabe flow 
without referring to Theorem \ref{taylor-theorem},  an approach that might prove useful 
in the non-compact setting as in e.g. \cite{ShortTime}.
In addition to the spaces introduced in Section \ref{Section:Negative},  we need one more open set
\begin{align}\label{eq:BoundedAwayZero1}
\mathcal{O}_T&\coloneqq \left\{w\in C^{2+\alpha}\left([0,T]\times M\right)\, \colon\, \exists\, c,\tilde{c}>0: w>c \ \ \mbox{and} \ \ f'(w^{-\beta}Lw)<-\tilde{c}\right\}.
\end{align}

\begin{Prop}[Linearized problem]\label{linearizedequation}
We start by discussing the non-normalized flow. 
Let $0<\alpha< 1$ and assume $f\in C^{2}(\mathbb{R})$ is strictly decreasing. Then for any $u\in\mathcal{O}_T$, $h_0\in C^{2,\alpha}(M)$ and $\Psi\in C^{\alpha}([0,T]\times M)$ there exists a unique $h\in C^{2+\alpha}([0,T]\times M)$ which is a solution to
\begin{equation}\label{eq:linearizedequation}
\begin{aligned}
\partial_{t}h&=D\mathcal{F}(u)h+\Psi\\
h(0)&=h_0
\end{aligned}
\end{equation}
where $\mathcal{F}$ is defined in Lemma \ref{lemmaFrechet}.
\end{Prop}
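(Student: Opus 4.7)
The plan is to recognise \eqref{eq:linearizedequation} as a linear uniformly parabolic Cauchy problem with H\"older continuous coefficients on the closed manifold $M$, and then to invoke the standard parabolic Schauder theory.

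Using Lemma \ref{lemmaFrechet} and the expansion $L = S_0 - c_n \Delta_0$, the equation can be rewritten as
\[
\partial_t h - a(t,x)\,\Delta_0 h - b(t,x)\,h = \Psi, \qquad h(0)=h_0,
\]
where
\[
a := -c_n\, f'(S)\, u^{\,1-\beta}, \qquad b := f(S) + f'(S)\bigl(S_0\, u^{\,1-\beta} - \beta S\bigr),
\]
and $S = u^{-\beta} L(u)$ is determined by the fixed background function $u$. Crucially, no first-order term in $h$ appears, since $L$ only contains order $0$ and order $2$ parts.

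The first step is to verify the parabolic H\"older regularity and the uniform parabolicity of this operator. Since $u \in \mathcal{O}_T \subset C^{2+\alpha}([0,T]\times M)$ and $u \geq c > 0$, the function $u^{-\beta}$ lies in $C^\alpha$, while $\Delta_0 u \in C^\alpha$ by the definition of $C^{2,\alpha}$. Hence $S = u^{-\beta}(S_0 u - c_n \Delta_0 u) \in C^\alpha([0,T]\times M)$, and composition with $f \in C^2$ yields $f(S),\, f'(S) \in C^\alpha$. Consequently $a,b \in C^\alpha([0,T]\times M)$. Uniform parabolicity is built into the definition of $\mathcal{O}_T$: there exist $c,\tilde c > 0$ with $u \geq c$ and $f'(S) \leq -\tilde c$, and $u$ is bounded from above by compactness, say $u \leq C$. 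Since $1-\beta < 0$,
\[
a(t,x) \;=\; -c_n\, f'(S)\, u^{\,1-\beta} \;\geq\; c_n\,\tilde c\, C^{\,1-\beta} \;>\; 0,
\]
and $a$ is bounded above on the compact set $[0,T]\times M$ since $|f'(S)|$ is bounded and $u^{\,1-\beta} \leq c^{\,1-\beta}$.

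The second step is to apply the classical linear parabolic Schauder existence and uniqueness theorem on the closed manifold $M$: for a uniformly parabolic operator of the form $\partial_t - a\Delta_0 - b$ with $C^\alpha$ coefficients, initial data $h_0 \in C^{2,\alpha}(M)$ and inhomogeneity $\Psi \in C^\alpha([0,T]\times M)$, there is a unique $h \in C^{2+\alpha}([0,T]\times M)$ solving the Cauchy problem. This is the closed-manifold version of the theory of Ladyzhenskaja-Solonnikov-Ural'ceva / Lieberman, proved by covering $M$ by finitely many coordinate charts, using a subordinate partition of unity, and reducing matters to the Euclidean Schauder estimates. Since $\partial M = \emptyset$ there are no boundary compatibility conditions. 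Uniqueness follows from the parabolic maximum principle applied to the difference of two solutions, which satisfies the homogeneous equation with zero initial data.

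The only delicate technical point is to check the parabolic (not merely spatial) H\"older regularity of $S$, which is precisely why the hypothesis $u \in C^{2+\alpha}([0,T]\times M)$ is used rather than just $u(t) \in C^{2,\alpha}(M)$ for each fixed $t$. Once this is in hand, everything else is routine application of off-the-shelf results.
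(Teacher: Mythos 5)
Your proposal is correct and follows essentially the same route as the paper: both recognise \eqref{eq:linearizedequation} as a linear, uniformly parabolic Cauchy problem with $C^{\alpha}$ coefficients (uniform parabolicity coming from $u>c$ and $f'(S)<-\tilde{c}$ built into the definition of $\mathcal{O}_T$) and then appeal to standard linear parabolic existence and uniqueness theory on the closed manifold $M$. The only difference is one of emphasis: the paper assembles the global solution by hand from local Friedman solutions via a partition of unity, whereas you cite the closed-manifold Schauder theorem directly, and your explicit verification of the coefficient regularity and the ellipticity bounds is, if anything, more complete than the paper's.
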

\begin{proof}
Note that $\mathcal{A}\coloneqq D\mathcal{F}(u)= f\left(S\right)+f'\left(S\right)\left(u^{1-\beta}L-\beta S\right)$ is a strongly elliptic operator for each $u\in\mathcal{O}_T$. Its local coefficients are $C^\alpha$ functions.  The solution will therefore follow by standard parabolic theory as follows. Cover $M$ by finitely many coordinate neighbourhoods $U_i$, which we identify with opens (also called $U_i$) in $\R^n$. Let $\phi_i$ be a partition of unity subordinate to $\{U_i\}$. Let $h_{0,i}\coloneqq \phi_i h_0$ and $\Psi_i\coloneqq \phi_i \Psi$. By the local theory for linear parabolic PDEs, \cite[Theorem 7, Chapter 3]{Friedman} for instance, there are unique solutions $h_i$ to the initial-boundary value problems
\[\begin{cases} (\partial_t -\mathcal{A})h_i=\Psi_{i} \\ (h_{i})_{t=0}=h_{i,0} \\ (h_i)_{\vert \partial U_i}=0.\end{cases}\]
These solutions are in $C^{2+\alpha}([0,T]\times U_i)$. Extend these solutions to continuous functions on $M$ by extending by $0$;
\[\overline{h}_i(x,t)\coloneqq \begin{cases} h_i(x,t) & x\in U_i \\ 0 & x\in M\setminus U_i.\end{cases}\]
Let  $h\coloneqq \sum_i \overline{h}_i$. We claim $h$ is the sought solution. It clearly satisfies the right initial condition since $\sum_{i} \phi_i=1$.  To see that it solves the PDE, let $v\in C^{2+\alpha}([0,T]\times M)$ and let $\mathcal{A}^*$ denote the adjoint of $\mathcal{A}$. Then
\begin{align*}
\ip{h}{\mathcal{A}^*v}_{L^2(M)}&=\sum_i \ip{\overline{h}_i}{\mathcal{A}^* v}_{L^2(M)}=\sum_i \ip{h_i}{\mathcal{A}^* v}_{L^2(U_i)}\\
&=\sum_i\ip{\mathcal{A} h_i}{v}_{L^2(U_i)}=\sum_i\ip{\partial_t h_i-\Psi_{i}}{v}_{L^2(U_i)}\\
&=\ip{\partial_t h-\Psi}{v}_{L^2(M)}.
\end{align*}
This shows  that $h$ is a weak solution. Hence it is also the strong solution. 
\end{proof}
\begin{Thm}[Short-time existence]
Let $0<\alpha< 1$ and assume $f\in C^{2}(\mathbb{R})$ is strictly decreasing. Let $u_0\in \mathcal{O}$. 
Then there exists a $T>0$ and a unique $u\in C^{2+\alpha}([0,T]\times M)$ solving
\begin{equation}\label{eq:PVI}
\begin{aligned}
\partial_{t}u&=f(S)\cdot u\\
u(0)&=u_{0}.
\end{aligned}
\end{equation}
\end{Thm}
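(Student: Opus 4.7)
The plan is to obtain the solution via Banach's fixed-point theorem applied to a map whose fixed point solves \eqref{eq:PVI}. Extend $u_0$ trivially to a time-independent function $\overline{u}(t,x):=u_0(x)$; since $u_0\in\mathcal{O}$ and $\mathcal{O}$ is open in $C^{2,\alpha}(M)$, one has $\overline{u}\in\mathcal{O}_T$ for every $T>0$ sufficiently small. Seeking the solution in the form $u=\overline{u}+h$ with $h(0,\cdot)=0$, equation \eqref{eq:PVI} is equivalent to
\begin{equation*}
\partial_t h - D\mathcal{F}(\overline{u})\, h = \Psi(h), \qquad h(0,\cdot)=0,
\end{equation*}
where $\Psi(h):=\mathcal{F}(\overline{u}+h) - D\mathcal{F}(\overline{u})\,h$ is the nonlinear remainder. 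On the closed subspace $E_T:=\{h\in C^{2+\alpha}([0,T]\times M):h(0,\cdot)=0\}$, define $\mathcal{T}:E_T\to E_T$ by letting $\mathcal{T}(h)$ be the unique solution of the linear problem with source $\Psi(h)$ and zero initial datum; existence is provided by Proposition \ref{linearizedequation}, which together with standard Schauder estimates supplies the linear bound $\|\mathcal{T}(h)\|_{C^{2+\alpha}}\leq K\|\Psi(h)\|_{C^\alpha}$, where $K$ depends only on $\overline{u}$. Any fixed point of $\mathcal{T}$ then gives the sought $u=\overline{u}+h$.

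For the contraction estimate, observe first that $\|\Psi(0)\|_{C^\alpha([0,T]\times M)}=\|\mathcal{F}(u_0)\|_{C^\alpha(M)}=:C_0$ is independent of $T$, so $\|\mathcal{T}(0)\|_{C^{2+\alpha}}\leq KC_0$. For $h_1,h_2\in E_T$ one writes
\begin{equation*}
\Psi(h_1)-\Psi(h_2) = \int_0^1 \bigl[D\mathcal{F}(\overline{u}+sh_1+(1-s)h_2) - D\mathcal{F}(\overline{u})\bigr](h_1-h_2)\,ds.
\end{equation*}
Combining the explicit formula of Lemma \ref{lemmaFrechet} (using $f\in C^2$) with the fact that every $h\in E_T$ satisfies $h(0,\cdot)=0$, so that the parabolic Hölder structure forces $\sup_{t\in[0,T]}\|h(t,\cdot)\|_{C^0(M)}\leq T^{\alpha/2}\|h\|_{C^{2+\alpha}}$, an interpolation argument yields, on each bounded ball $B_R\subset E_T$, an estimate of the form
\begin{equation*}
\|\mathcal{T}(h_1)-\mathcal{T}(h_2)\|_{C^{2+\alpha'}}\leq K\,\eta(T,R)\,\|h_1-h_2\|_{C^{2+\alpha}},\qquad \eta(T,R)\to 0 \text{ as } T\to 0,
\end{equation*}
for any fixed $\alpha'<\alpha$. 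Choosing $R=2KC_0$ and $T$ so small that $K\eta(T,R)<\tfrac{1}{2}$, the map $\mathcal{T}$ preserves $B_R$ and is a strict contraction in $C^{2+\alpha'}$, so Banach's theorem gives a unique fixed point $h\in B_R$. Uniqueness for the full nonlinear flow then follows by applying Proposition \ref{linearizedequation} to the difference of two candidate solutions, which solves a linear parabolic equation with vanishing initial datum.

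The main obstacle is the slight loss of Hölder exponent in the contraction: the coefficients of $D\mathcal{F}(v)-D\mathcal{F}(\overline{u})$ depend on $v$ through $S(v)$, which already uses up the spatial regularity of $v\in C^{2,\alpha}$, so the small $C^0$-size of $h$ transfers to a small $C^{\alpha'}$-size only with $\alpha'<\alpha$. This is handled by contracting in $C^{2+\alpha'}$ and then upgrading to the full $C^{2+\alpha}$ regularity a posteriori via the Schauder estimate of Proposition \ref{linearizedequation} applied to the fixed point. A minor side condition, that $u=\overline{u}+h$ remains inside $\mathcal{O}_T$ so that $\mathcal{F}(u)$ is defined throughout the iteration, follows automatically from the smallness of $h$ in $C^0$ for $T$ small and from the openness of $\mathcal{O}$.
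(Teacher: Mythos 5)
Your argument is sound in outline but follows a genuinely different route from the paper's. The paper applies the inverse function theorem to the integrated operator $\tF(w)=w-\int_0^t\mathcal{F}(w)\,d\tau$, using Proposition \ref{linearizedequation} only to invert $D\tF$ at the constant-in-time extension of $u_0$ and then observing that $\tF(u_0)=(1-tf(u_0^{-\beta}L(u_0)))u_0$ is close to $u_0$ for small $t$; you instead linearize the PDE itself around $\overline{u}$ and run a Banach fixed-point iteration with the quadratic remainder $\Psi(h)$ as source. Both proofs rest on the same key ingredient (solvability of the linearized problem with Schauder estimates), and the authors explicitly anticipate a fixed-point strategy in the Remark closing Appendix \ref{appendix-short-time}, though the map sketched there freezes the nonlocal term rather than the linearization. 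Your route is more quantitative and yields uniqueness in the ball for free; the paper's is shorter because the IFT hides the iteration. One step of yours needs repair: since the contraction only holds in $C^{2+\alpha'}$, the inequality $\norm{\mathcal{T}(h)}\leq\norm{\mathcal{T}(0)}+\tfrac12\norm{h}$ used to get $\mathcal{T}(B_R)\subset B_R$ mixes norms --- the ball is measured in $C^{2+\alpha}$ but the difference is only controlled in $C^{2+\alpha'}$, and the direct estimate $\norm{\Psi(h)-\Psi(0)}_{C^\alpha}\leq C\norm{h}_{C^{2,\alpha}}^2$ carries a constant that does not shrink with $T$, so $R=2KC_0$ cannot simply be absorbed. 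The standard fix is to take the invariant ball in $C^{2+\alpha'}$ as well (where smallness in $T$ is available for both the self-map and the contraction) and address the $C^{2+\alpha}$ regularity of the fixed point separately; with that bookkeeping corrected the proof closes.
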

\begin{proof}
Let $T>0$.
We introduce the map
\begin{align*}
\tF\colon \mathcal{O}_T&\to C^1([0,T];C^\alpha( M))\\
w &\mapsto w(t)-\int_0^t \mathcal{F}(w)\, d\tau. 
\end{align*}
The reason being that a solution $u$ is precisely a function satisfying $\tF(u)=u(0)$ for any $t$. So we can produce the solution $u$ if we can invert $\tF$. The linearization $D\tF$ at a point $w\in \mathcal{O}_T$ is readily seen to be
\[D\tF(w)h(t)=h(t)-\int_0^t D\mathcal{F}(w)h\, d\tau.\]
Since we can solve the linearised problem at $w=u_0$ (we think of $u_0$ as an element of $\mathcal{O}_T$ which is constant in time) by Proposition \ref{linearizedequation}, $D\tF$ is locally invertible.  
From the definition,
\[\tF(u_0)=(1-tf(u_0^{-\beta}L(u_0)))u_0\]
is in $C^1([0,T];C^{\alpha}( M))$ and is close to $u_0$ for $t$ small enough. By the inverse function theorem $\tF$ is therefore invertible near $u_0$ for $t\in [0,T]$ with $T$ small enough.  
\end{proof}
For the normalized flow in the case where $f$ is not $\alpha$-homogeneous, we computed the linearisation towards the end of Section \ref{Sec:Linearisation}. 
One would need to quote a slightly stronger result than \cite{Friedman} to allow for the integral term, but other than that, one could repeat the argument of the non-normalized flow. 

\begin{Rem}
Another strategy one could pursue would be to set up a fixed point argument. Here is a rough suggestion
using Hölder spaces,  introduced in Definition \ref{def-spaces} below.
 Let $K=\left\{v\in C^{2,\alpha}([0,T]\times M)\, \colon\, \exists\, c>0\, \colon\, c\leq v\leq c^{-1}\right\}\cap \overline{B_1}(u_0)$, where 
\[B_1(u_0)=\{v\in C^{2,\alpha}([0,T]\times M)\, \colon \norm{v-u_0}_{C^{2,\alpha}}\leq 1\}.\]
This is clearly a closed and convex subset. For $v\in K$, set $d\Vol_{g_v}\coloneqq v^{\frac{2n}{n-2}} d\Vol_{g_0}$ and $S_{v}\coloneqq v^{-\beta} L(v)$. This the volume form and scalar curvature respectively of the metric $g_v\coloneqq v^{\frac{4}{n-2}}g_0$.  Define
\begin{align*}
A\colon K \to C^\alpha([0,T]), \quad 
A(v)\coloneqq \frac{1}{\Vol_{g_v}(M)} \int_M f(S_v)\, d\Vol_{g_v}.
\end{align*}
Finally, let $\mathcal{G}\colon K\to C^{2+\alpha}([0,T]\times M)$ be defined as 
\[\mathcal{G}(v)\coloneqq \text{solution of } \begin{cases} \partial_t u=(f(S)-A(v))u \\ u(0)=u_0.\end{cases}\]
Here $S=S_u$ is the scalar curvature of $g=u^{\frac{4}{n-2}}g_0$. This map is well-defined by the short-time existence of the non-normalised flow with a harmless additional linear term.
Furthermore, $\mathcal{G}$ is continuous. A solution to the normalised flow is precisely a fixed point of $\mathcal{G}$. One would then have to show the existence of a fixed point, for instance by showing that $\mathcal{G}$ is a contraction.
\end{Rem}

\section{Some more evolution equations}
\label{Appendix}

We now proceed with some a priori $L^p$-estimates in the positive case.
We assume as before that $\Vol_{g_0}(M)=1$ to simplify some formulas. 
Recall 
\[A=\int_M f(S)\, d\Vol_g.\] We also introduce the average scalar curvature
$$
\sigma\coloneqq \int_M S\, d\Vol_g.
$$

\begin{Lem}
\label{Lem:TotalScalarEvol}
Along the generalized Yamabe flow \eqref{eq:uEq}, we have
\begin{equation}
\begin{split}
A'(t) &= (n-1)\int_M f'(S)f''(S)\vert \nabla S\vert^2\, d\Vol_g + \int_M \left(\frac{n}{2}f(S)-Sf'(S)\right)(f(S)-A)\, d\Vol_g, \\
\sigma'(t) &= \frac{n-2}{2}\int_M S(f(S)-A)\, d\Vol_g=\frac{n-2}{2}\int_M (S-\sigma)(f(S)-A)\, d\Vol_g.
\end{split}
\end{equation}
\end{Lem}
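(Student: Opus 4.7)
The strategy is to differentiate $A(t)$ and $\sigma(t)$ under the integral sign, using the product rule together with two facts already established: the evolution of the volume form, $\partial_t d\Vol_g = \tfrac{n}{2}(f(S)-A)\,d\Vol_g$, and the two forms of the scalar-curvature evolution in Lemma \ref{Lem:ScalarEvol}. The only nontrivial analytic step is an integration by parts to convert a $\Delta S$-term into a $|\nabla S|^2$-term.

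For $\sigma'(t)$, apply the product rule and substitute the form \eqref{eq:IntrinsicScalarCurvEvol1} of $\partial_t S$:
\begin{align*}
\sigma'(t) &= \int_M \partial_t S\,d\Vol_g + \int_M S\,\partial_t d\Vol_g \\
&= -(n-1)\int_M \Delta f(S)\,d\Vol_g - \int_M S(f(S)-A)\,d\Vol_g + \tfrac{n}{2}\int_M S(f(S)-A)\,d\Vol_g.
\end{align*}
The Laplacian term vanishes on the closed manifold $M$, and the remaining two terms combine into $\tfrac{n-2}{2}\int_M S(f(S)-A)\,d\Vol_g$. The equivalent form with $(S-\sigma)$ follows because $\int_M (f(S)-A)\,d\Vol_g = 0$ by the definition of $A$ (using $\Vol_g(M)=1$), so subtracting $\sigma\cdot 0$ makes no change.

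For $A'(t)$, the same procedure gives
\begin{align*}
A'(t) = \int_M f'(S)\,\partial_t S\,d\Vol_g + \tfrac{n}{2}\int_M f(S)(f(S)-A)\,d\Vol_g.
\end{align*}
Now I use the second form \eqref{eq:IntrinsicScalarCurvEvol2} of $\partial_t S$, which splits into three contributions: $-(n-1)f'(S)^2\Delta S$, $-(n-1)f'(S)f''(S)|\nabla S|^2$, and $-Sf'(S)(f(S)-A)$. Integration by parts on a closed manifold converts
\begin{align*}
-(n-1)\int_M f'(S)^2\,\Delta S\,d\Vol_g &= (n-1)\int_M \nabla(f'(S)^2)\cdot\nabla S\,d\Vol_g \\
&= 2(n-1)\int_M f'(S)f''(S)|\nabla S|^2\,d\Vol_g.
\end{align*}
Combining this with the direct $-(n-1)\int_M f'(S)f''(S)|\nabla S|^2\,d\Vol_g$ contribution yields a net coefficient of $(n-1)$. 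The remaining non-derivative terms collect into $\int_M \bigl(\tfrac{n}{2}f(S)-Sf'(S)\bigr)(f(S)-A)\,d\Vol_g$, giving the claimed identity.

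There is no real obstacle here; the argument is a bookkeeping exercise. The only place to be attentive is the sign convention of $\Delta_0$ (the geometer's convention used throughout the paper, where $\int_M\Delta\xi\,d\Vol_g=0$ and $\int_M (\Delta u)v\,d\Vol_g = -\int_M \langle\nabla u,\nabla v\rangle\,d\Vol_g$), so that the integration by parts produces the $+2(n-1)f'(S)f''(S)|\nabla S|^2$ term with the correct sign to give the final coefficient $(n-1)$ rather than, say, $-3(n-1)$.
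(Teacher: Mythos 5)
Your proof is correct and follows the same route as the paper: differentiate under the integral using the volume-form evolution \eqref{eq:VolumElemEvol}, insert \eqref{eq:IntrinsicScalarCurvEvol1} for $\sigma'$ (the $\Delta f(S)$ term integrating to zero) and \eqref{eq:IntrinsicScalarCurvEvol2} for $A'$, with a single integration by parts turning $-(n-1)\int f'(S)^2\Delta S$ into $+2(n-1)\int f'(S)f''(S)|\nabla S|^2$ so that the net coefficient is $(n-1)$. The sign bookkeeping and the observation that $\int_M(f(S)-A)\,d\Vol_g=0$ for the $(S-\sigma)$ form are exactly as in the paper.
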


\begin{proof}
We have
\[A'(t)=\int_M f'(S) \partial_t S\, d\Vol_g + \int_M f(S) \partial_t d\Vol_g.\]
The first term we rewrite with \eqref{eq:IntrinsicScalarCurvEvol2} and an integration by parts,
\begin{align*}
\int_M f'(S) \partial_t S\, d\Vol_g &=\int_M f'(S)\left(-(n-1)\left(f'(S)\Delta S+f''(S) \vert 
\nabla S\vert^2_{g}\right) -S(f(S)-A)\right)\, d\Vol_g \\
&=(n-1)\int_M f'(S)f''(S)\vert \nabla S\vert^2_g \, d\Vol_g -\int_M f'(S)S(f(S)-A)\, d\Vol_g.
\end{align*}
The second term is \eqref{eq:VolumElemEvol}. Combining these gives the claimed formula.
The evolution equation for $\sigma$ is a bit simpler, namely
\[
\sigma'(t)=\int_M \partial_t S\, d\Vol_g + \int_M S \, \partial_t d\Vol_g,
\]
and inserting \eqref{eq:IntrinsicScalarCurvEvol1} and \eqref{eq:VolumElemEvol} gives the claim.
\end{proof}

\begin{Lem}
\label{Lem:SpEvol}
Let $p\geq 2$. Then the $L^p$-norms of $f(S), S$,  $(S-\sigma)$, and $(f(S)-A)$ evolve according to 
\begin{align*}
\frac{d}{dt} \int_M \vert f(S)\vert^p\, d\Vol_g &= (n-1)\int_M f'(S)\vert f(S)\vert^{p-2} \left((p-1)f'(S)^2+f(S)f''(S)\right)\vert \nabla S\vert^2\, d\Vol_g\\
&+\left(\frac{n}{2}f(S)^2 -pSf(S)f'(S)\right)\vert f(S)\vert^{p-2} (f(S)-A)\,d\Vol_g, \\
\frac{d}{dt} \int_M \vert S\vert^p \, d\Vol_g &=p(p-1)(n-1)\int_M \vert S\vert ^{p-2} f'(S) \vert \nabla S\vert^2\, d\Vol_g \\
&+\left(\frac{n}{2}-p\right)\int_M \vert S\vert^p (f(S)-A)\, d\Vol_g, \\
\frac{d}{dt} \int_M \vert S-\sigma\vert^p\, d\Vol_g&=p(p-1)(n-1)\int_M f'(S)\vert S-\sigma\vert^{p-2}\vert \nabla S\vert^2\, d\Vol_g \\
&+\left(\frac{n}{2}-p\right)\int_M (f(S)-A)\vert S-\sigma\vert^{p}\, d\Vol_g\\
&-p\int_M \left(\sigma'(t)+\sigma(f(S)-A)\right)\vert S-\sigma\vert^{p-2}(S-\sigma)\, d\Vol_g.\\
\frac{d}{dt} \int_M \vert f(S)-A\vert^p \, d\Vol_g&=p(n-1)\int_M \vert f(S)-A\vert^{p-2}(f(S)-A)f'(S)f''(S)\vert \nabla S\vert^2_g \, d\Vol_g\\
&+p(p-1)(n-1)\int_M \vert f(S)-A\vert^{p-2}(f'(S)^3\vert \nabla S\vert^2_g \, d\Vol_g \\
 &+\int_M \vert f(S)-A\vert^p\left(\frac{n}{2}(f(S)-A)-pS f'(S)\right)\, d\Vol_g \\
 &-p A'(t) \int_M \vert f-A\vert^{p-2}(f(S)-A)\, d\Vol_g.
\end{align*}

\end{Lem}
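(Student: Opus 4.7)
The four evolution equations all follow from the same three-step template, so I would prove them in parallel. Given a function $X$ depending on $t$ and on $p\in M$, the chain rule gives
\[
\partial_t |X|^p = p\,|X|^{p-2} X\, \partial_t X,
\]
and combining this with the volume form evolution $\partial_t\, d\Vol_g=\tfrac{n}{2}(f(S)-A)\, d\Vol_g$ from \eqref{eq:VolumElemEvol} yields
\[
\frac{d}{dt}\int_M |X|^p\, d\Vol_g = \int_M p\,|X|^{p-2} X\,\partial_t X\, d\Vol_g +\int_M |X|^p \tfrac{n}{2}(f(S)-A)\, d\Vol_g.
\]
The task then reduces to computing $\partial_t X$ from the evolution equations already derived and integrating by parts on the compact manifold $M$ (which has no boundary, so there are no boundary terms).

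For $X=S$ I would insert the intrinsic form \eqref{eq:IntrinsicScalarCurvEvol1}, $\partial_t S=-(n-1)\Delta f(S)-S(f(S)-A)$, and then integrate by parts the Laplacian term using $\nabla(|S|^{p-2}S)=(p-1)|S|^{p-2}\nabla S$ and $\nabla f(S)=f'(S)\nabla S$. Collecting the $(f(S)-A)$-proportional contributions against the volume-form term produces the coefficient $(\tfrac{n}{2}-p)$. For $X=f(S)$ the chain rule gives $\partial_t f(S)=f'(S)\partial_t S=-(n-1)f'(S)\Delta f(S)-Sf'(S)(f(S)-A)$; an analogous integration by parts with the product-rule identity
\[
\nabla\bigl(|f(S)|^{p-2}f(S)\,f'(S)\bigr) = |f(S)|^{p-2}\bigl((p-1)f'(S)^2+f(S)f''(S)\bigr)\nabla S
\]
yields the stated gradient integrand.

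For the two centered quantities the only extra step is handling the time derivatives of the spatial constants. Since $\sigma(t)$ depends only on $t$, we have $\nabla(S-\sigma)=\nabla S$, so the integration by parts is unchanged, but $\partial_t(S-\sigma)=\partial_t S-\sigma'(t)$ produces an extra term
\[
-p\int_M \bigl(\sigma'(t)+\sigma(f(S)-A)\bigr)|S-\sigma|^{p-2}(S-\sigma)\, d\Vol_g
\]
after writing $S(f(S)-A)=(S-\sigma)(f(S)-A)+\sigma(f(S)-A)$ and absorbing the first piece into the $(f(S)-A)$-term. Similarly, for $X=f(S)-A$ we have $\nabla(f(S)-A)=\nabla f(S)$, and $\partial_t(f(S)-A)=f'(S)\partial_t S-A'(t)$ generates the $A'(t)$ correction term. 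In this last case the integration by parts is the most intricate: applying the product rule to $|f(S)-A|^{p-2}(f(S)-A)f'(S)$ and pairing with $\nabla f(S)=f'(S)\nabla S$ produces both the $f'(S)^3$ term (with coefficient $p(p-1)(n-1)$) and the mixed $f'(S)f''(S)$ term (with coefficient $p(n-1)$).

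The calculations are essentially mechanical; the main opportunity for error is bookkeeping. The hardest step is the $|f(S)-A|^p$ identity, where one must differentiate a triple product and carefully track how the Laplacian expansion $\Delta f(S)=f'(S)\Delta S+f''(S)|\nabla S|^2_g$ redistributes after integration by parts. I would do that case last, after having checked the simpler $|S|^p$ identity against it as a sanity check in the formal limit $f(x)=x$ modulo constants.
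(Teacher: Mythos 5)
Your proposal is correct and follows essentially the same route as the paper: differentiate under the integral using the volume-form evolution \eqref{eq:VolumElemEvol} and the scalar curvature evolution \eqref{eq:IntrinsicScalarCurvEvol1}, then integrate by parts via $\nabla(|X|^{p-2}X)=(p-1)|X|^{p-2}\nabla X$, exactly as the paper does in Lemma \ref{Lem:TotalScalarEvol} and then invokes for Lemma \ref{Lem:SpEvol}. (As a minor aside, your computation for the $|f(S)|^p$ identity correctly produces an overall factor $p(n-1)$ on the gradient term, which suggests the stated $(n-1)$ in the lemma is a typographical slip rather than an error in your argument.)
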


\begin{proof}
The identities follow by similar arguments as in Lemma 
\ref{Lem:TotalScalarEvol}. The only addition is the formula 
$\nabla \vert S\vert^p =p \vert S\vert^{p-2} S \nabla S$.
\end{proof}

\begin{Cor}
\label{Cor:Sn/2Bound}
Abbreviating $L^p=L^p(M,g)$, we have
\[\norm{S}_{L^p} \leq \norm{S_0}_{L^{\frac{n}{2}}}\]
for any $p\leq \frac{n}{2}$.
\end{Cor}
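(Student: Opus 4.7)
The statement decouples cleanly into two observations: monotonicity of the $L^{n/2}$-norm along the flow, and a Hölder inequality to pass from $p=n/2$ down to smaller $p$.

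First I would specialize the evolution formula from Lemma \ref{Lem:SpEvol} to the critical exponent $p=n/2$. The key structural feature of that formula is that the coefficient $(n/2-p)$ in front of the non-sign-definite term $\int |S|^p(f(S)-A)\,d\Vol_g$ vanishes precisely at $p=n/2$. What remains is
\[
\frac{d}{dt}\int_M |S|^{n/2}\, d\Vol_g \;=\; \tfrac{n(n-2)(n-1)}{4}\int_M |S|^{n/2-2}\, f'(S)\,|\nabla S|^2\, d\Vol_g,
\]
and since $f'\leq 0$ by hypothesis, the right-hand side is non-positive. Hence $t\mapsto \norm{S(t)}_{L^{n/2}}^{n/2}$ is non-increasing, which integrated against the initial condition gives $\norm{S(t)}_{L^{n/2}}\leq \norm{S_0}_{L^{n/2}}$.

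Second, I would pass from $p=n/2$ to smaller exponents via Hölder. The standing normalization $\Vol_{g_0}(M)=1$ together with the fact that the generalized Yamabe flow preserves total volume means $\Vol_g(M)=1$ for all $t$. For $1\leq p\leq n/2$, Hölder's inequality with exponents $(n/2)/p$ and its conjugate therefore gives
\[
\int_M |S|^p\, d\Vol_g \;\leq\; \left(\int_M |S|^{n/2}\, d\Vol_g\right)^{\!2p/n}\!\!\cdot\, \Vol_g(M)^{1-2p/n} \;=\; \norm{S}_{L^{n/2}}^p,
\]
so $\norm{S}_{L^p}\leq \norm{S}_{L^{n/2}}$. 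Chaining with the monotonicity from the first step yields the desired bound $\norm{S}_{L^p}\leq \norm{S_0}_{L^{n/2}}$.

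There is essentially no obstacle: the only point that warrants care is verifying the sign of the coefficient in Lemma \ref{Lem:SpEvol} at the critical exponent, which is exactly where the classical $n/2$ comes from (this is of course the same scaling that underlies the Yamabe conjugate exponent). The argument works uniformly for all exponents $p\leq n/2$, and it does not require any assumption on the sign of $S$ since the evolution equation in Lemma \ref{Lem:SpEvol} is written in terms of $|S|^p$.
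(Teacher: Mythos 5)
Your proposal is correct and follows essentially the same route as the paper: specialize Lemma \ref{Lem:SpEvol} to $p=n/2$, where the coefficient $\frac{n}{2}-p$ of the non-sign-definite term vanishes and the remaining gradient term is $\leq 0$ because $f'\leq 0$, then pass to $p<\frac{n}{2}$ by H\"older using the volume normalization $\Vol_g=1$. No gaps; the constant $\frac{n(n-1)(n-2)}{4}$ and the sign analysis both match the paper's argument.
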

\begin{proof}
The evolution equation in Lemma \ref{Lem:SpEvol} for $\norm{S}_{L^p}$ says 
\[
\frac{d}{dt} \int_M \vert S\vert^{\frac{n}{2}}\, d\Vol_g =
 \frac{n(n-1)(n-2)}{4}\int_M  \vert S\vert^{\frac{n}{2}-2}f'(S) \vert \nabla S\vert^2 d\Vol_g\leq 0.
 \]
This proves the claim for $p=\frac{n}{2}$. The case $p < \frac{n}{2}$ follows from the H\"{o}lder inequality
\[
\norm{S}_{L^p}^p=\norm{S^p}_{L^1}\leq \norm{S^p}_{L^{\frac{n}{2p}}}\norm{1}_{L^{\frac{n}{n-2p}}}=\norm{S}^p_{L^{\frac{n}{2}}},
\]
  where we used the volume normalisation $\Vol_g=1$.
\end{proof}

\section*{Statements and Declarations}
The authors declare that they have no conflicts of interest to disclose.


\end{document}